\newif\ifdraft
\newcommand{\Aone}{\areaonecod}
\newcommand{\area}{\mathcal A}
\newcommand{\areaonecod}{\overline{\mathcal A}}
\newcommand{\axialcoordofcylinder}{{w_1}}
\newcommand{\BV}{{\rm BV}}
\newcommand{\dirdatum}{\varphi}
\newcommand{\dts}{\Sigma}
\newcommand{\dtsm}{\Sigma^-}
\newcommand{\dtsn}{\Sigma_n}
\newcommand{\dtsp}{\Sigma^+}
\newcommand{\dtsnm}{\Sigma_n^-}
\newcommand{\dtsnp}{\Sigma_n^+}
\newcommand{\doubledrectangle} {R_{2\longR}}
\newcommand{\duevirgolan}{{2,n}}
\def\eps{\varepsilon}
\newcommand\extpsi{\widehat \psi}
\newcommand{\FB}{{\mathcal F}_{2\longR}}
\newcommand{\FBl}{{\mathcal F}_\longR}
\newcommand{\graphh}{G_ \h}
\newcommand\grad{\nabla}
\newcommand{\h}{h}
\newcommand{\hn}{{\h_n}}
\newcommand{\Hone}{\mathcal{H}^1}
\newcommand{\Htwo}{\mathcal{H}^2}
\newcommand{\Hricc}{\mathcal H_\longR}
\newcommand{\Hspace}{\mathcal H_{2l}}
\newcommand{\imenounovirgolan}{{i-1,n}}
\newcommand{\ipiuunovirgolan}{{i+1,n}}
\newcommand{\ivirgolan}{{i,n}}
\newcommand{\jump}[1]{\text{{\rm \textlbrackdbl}}{#1}\text{{\rm \textrbrackdbl}}}
\newcommand{\Lh}{L_\h}
\newcommand{\Lone}{L^1}
\newcommand{\longR}{l}
\newcommand{\modh}{h^\star}
\newcommand{\modpsi}{\psi^\star}
\newcommand{\modhbis}{h^\#}
\newcommand{\modpsibis}{\psi^\#}
\newcommand{\nada}[1]{}
\def\NN{\mathbb{N}}
\newcommand{\Omegah}{\subgraph_\h}
\newcommand{\Omegahn}{\subgraph_\hn}
\newcommand{\oldDom}{X_\longR}
\newcommand{\Om}{\Omega}
\newcommand{\openset}{O}
\newcommand{\partialbar}{ \partial_D}
\newcommand{\pointP}{\mathcal P}
\newcommand{\pointQ}{\mathcal Q}
\newcommand{\projthree}{\pi_3}
\newcommand{\psione}{\psi}
\newcommand{\psionen}{\psione_n}
\newcommand{\puntone}{Z}
\newcommand{\puntino}{z}
\newcommand{\R}{\mathbb{R}}
\newcommand{\relarea}{\overline \area}
\newcommand{\res}{\mathop{\hbox{\vrule height 7pt width 0.5pt depth 0pt
\vrule height 0.5pt width 6pt depth 0pt}}\nolimits}
\newcommand{\scoord}{{w_2}}
\newcommand{\scalarfunction}{\psi}
\newcommand{\sourcedisk}{{\rm B}}
\newcommand{\subgraph}{SG}
\newcommand{\subgraphh}{SG_h}
\def\supp{\,{\rm supp \ }}
\newcommand{\tcoord}{{w_1}}
\newcommand{\terzopunto}{t_*}
\newcommand{\unovirgolan}{{1,n}}
\newcommand{\unitdisc}{{\rm D}}
\newcommand{\vortexmap}{u}
\newcommand{\Wspace}{X_\longR^{\rm conv}}
\numberwithin{equation}{section}
\mathchardef\emptyset="001F
\newtheorem{theorem}{Theorem}[section]
\newtheorem{definition}[theorem]{Definition}
\newtheorem{prop}[theorem]{Proposition}
\newtheorem{cor}[theorem]{Corollary}
\newtheorem{lemma}[theorem]{Lemma}
\theoremstyle{definition}
\newtheorem{remark}[theorem]{Remark}
\newtheorem{Remark}[theorem]{Remark}
\title{Relaxation of the area of the vortex map: a non-parametric Plateau problem for a catenoid containing a segment}
\author{
Giovanni Bellettini\footnote{
Dipartimento di Ingegneria dell'Informazione e Scienze Matematiche, Universit\`a di Siena, 53100 Siena, Italy,
and International Centre for Theoretical Physics ICTP,
Mathematics Section, 34151 Trieste, Italy.
E-mail: giovanni.bellettini@unisi.it
                      }\and
Alaa Elshorbagy\footnote{
Technische Universit\"at Dortmund, 
Fakult\"at f\"ur Mathematik, 
44227 Dortmund, Germany. E-mail: elshorbagy.alaa1@gmail.com 
                         }
                          \and
Riccardo Scala\footnote{ 
Dipartimento di Ingegneria dell'Informazione e Scienze Matematiche, Universit\`a di Siena, 53100 Siena, Italy.
E-mail: riccardo.scala@unisi.it
}
}
\begin{document}

\maketitle

\begin{abstract}
Motivated by the study of the 
non-parametric
area $\mathcal A$ of the graph of the vortex map $\vortexmap$
(a two-codimensional singular surface in $\R^4$) 
 over
 the disc $\Omega \subset \R^2$ of radius $\longR$,
we perform a careful analysis of the singular part
of the relaxation of $\mathcal A$ computed at $\vortexmap$. 
The precise description is given in terms of a area-minimizing
surface in a vertical copy of $\R^3 \subset \R^4$, which is a sort of ``catenoid'' containing
a segment corresponding to a radius of $\Omega$. The problem
involves an area-minimization with a free boundary part; 
several boundary regularity properties of the minimizer
are inspected.
\end{abstract}

\noindent {\bf Key words:}~~Relaxation, non-parametric
minimal surfaces, Plateau problem, area functional in codimension two.

\vspace{2mm}

\noindent {\bf AMS (MOS) subject clas\-si\-fi\-ca\-tion:} 
49Q15, 49Q20, 49J45, 58E12.

\section{Introduction}\label{sec:introduction}
Let $l >0$, and 
$\Om = \sourcedisk_\longR(0) = \{x = (x_1, x_2)\in \R^2 : 0
\leq \vert  x\vert <l\} \subset \R^2$. 
Consider 
the vortex map
\begin{equation}\label{vortexmapdef_intro}
	\vortexmap(x):=\frac{x}{|x|}, \qquad x \in \Om \setminus \{0\} \ 
\end{equation} 
The recent results of \cite{BES1,BES3} provide a formula, obtained via a relaxation 
procedure, for the area of the graph of $u$, a singular 
two-dimensional surface in $\R^4$. More specifically, 
consider the classical expression
 \begin{equation*}
 	\area(v,\Omega)
 	=\int_\Omega\sqrt{1+|\nabla v(x_1,x_2)|^2+|Jv(x_1,x_2)|^2}~dx_1dx_2
\qquad 
\forall v \in C^1(\Om; \R^2),
 \end{equation*}
 where $\nabla v$ is the gradient of $v$, a $2\times2$ matrix, 
 $\vert \nabla v\vert^2$ is the sum of the squares of all
 elements of $\nabla v$, 
 and $Jv$ is the Jacobian determinant of $v$, {\it i.e.}, the determinant of $\nabla v$.
Namely, $\area(v,\Omega)$ is  
the area  of the graph of the smooth map $v$. Now, 
denote by 
\begin{equation}\label{eq:area_relax}
	\relarea(v,\Omega):=\inf
	\Big\{\liminf_{k\rightarrow +\infty}\area(v_k,\Omega)\Big\} 
\qquad \forall v \in 
L^1(\Om, \R^2), 
\end{equation}
the sequential relaxation of  $\area(\cdot,\Omega)$ in 
the $L^1$-convergence.
The infimum in \eqref{eq:area_relax} is computed 
over all sequences of maps $v_k\in C^1(\Omega,\R^2)$ 
approaching $v$ in $L^1(\Omega,\R^2)$.
Despite $\area(\cdot, \Om) = \relarea (\cdot, \Om)$ on 
$C^1(\Om, \R^2)$,
the computation of $\relarea(v,\Omega)$ for  $v \notin C^1(\Om, \R^2)$ 
appears to be
at the moment out of reach, due essentially
to highly nonlocal phenomena created by the mutual interaction 
of the singularities of the map $v$, and also
by the interaction of such singularities with the boundary
of $\Om$ \cite{BSS,Scala:19,ScSc}. 
However,  in the case the map $v$ equals the vortex map $u$, 
the following  is proven in 
\cite{BES1,BES3}:
		\begin{align}\label{value_main}
\relarea (u,\sourcedisk_\longR(0))
		=\int_{\sourcedisk_\longR(0)}\sqrt{1+|\nabla u|^2}dx+
2
\inf_{(h,\psi) \in 
	X_l}	
\mathcal F_l(h,\psi),
	\end{align}
showing the presence of the interesting singular term
\begin{equation}\label{eq:interesting_singular_term}
2
\inf_{(h,\psi) \in 
	X_l}	
\mathcal F_l(h,\psi),
\end{equation}
 expressed as
an infimum of an appropriate functional 
$\mathcal F_l(h,\psi)$, that we are going to describe, and 
that will be the argument of the present paper.
If we set $R_l = (0,l) \times (-1,1)$, and 

\begin{equation}\label{eq:X_l_intro}
\begin{aligned}
	\oldDom
	:=& 
\Big\{(h,\psi):
	h\in L^\infty([0,\longR];[-1,1]), \psi \in \BV(R_l;[0,1]),
\\
& \quad \quad \psi=0 
~{\rm in ~}
SG_h := \{(w_1,w_2) \in R_l: w_2<h(w_1)\} 
	\Big\},
\end{aligned}
\end{equation}
 the functional $\FBl:\oldDom\rightarrow [0,+\infty)$ in \eqref{value_main}
is defined as
\begin{equation}\label{eq:F_l}
	\FBl(h,\psi)
:=\areaonecod
	(\psi, R_l)-\mathcal H^2(R_l\setminus SG_h)+\int_{\partial_DR_l}|\psi-\varphi|~d\mathcal H^1+\int_{(0,\longR)\times \{1\}}|\psi|~d\mathcal H^1.
\end{equation}
Since the function $\psi$ takes scalar values, 
the expression $
\areaonecod (\psi, B)$ 
of the area 
of its graph in any Borel set $B 
\subseteq R_l$ (i.e., the $L^1$-relaxation of 
the localized area $\area(\cdot, B)$ defined on $C^1(B)$
or on $W^{1,1}(B)$) has the well-known expression 
\begin{equation}\label{eq:relaxed_area_one_cod}
\areaonecod (\psi, B) = 
\int_{B} \sqrt{1 + \vert\grad^a \psi\vert^2}~dx + 
\vert D^s \psi\vert(B),
\end{equation}
with $\grad^a\psi$ the absolutely continuous and $D^s\psi$ the
singular part of the measure $D \psi$ (the distributional gradient of $\psi$). Also, the subgraph of $\psi$, and the trace 
of $\psi$ on the boundary of $R_l$, are well-defined 
by classical results 
on $BV$-functions, and so \eqref{eq:F_l} is well-defined.
Here the Dirichlet part $\partial_D R_l$ of $\partial R_l$ 
is given by two of the four sides, 
$\partial_D R_l = (\{0\} \times [-1,1])
\cup ((0,\longR) \times \{-1\})$, and the Dirichlet
boundary condition $\varphi: \partial_D R_{\longR} \rightarrow [0,1]$, dictated by the geometry of the vortex
map, is given by
$\varphi(t,s)
:= \sqrt{1-s^2}$ 
if $(t,s) \in \{0\} \times [-1,1]$
and $\varphi(t,s):=0$ if $(t,s) \in (0,\longR) \times \{-1\}$; see Fig. \ref{fig:graphofphi}. 
The multiplicative factor $2$ in front of the infimum in \eqref{eq:interesting_singular_term}
is due to the fact that we find 
convenient to describe the singular term, as we shall see,
using a non-parametric Plateau problem, while the original relaxation 
for the vortex map takes into account also the area contribution of the reflected surface over
the horizontal plane.

The aim of this paper is a careful analysis of the functional
$\FBl$ and of its domain $\oldDom$, and the precise
computation of \eqref{eq:interesting_singular_term} and of its minimizers;
our results
shed 	
light on the geometric meaning of the two-codimensional
question posed by the computation of $\relarea(u, \Om)$, and on 
its nonlocality with respect to $\Omega$. 
As we shall see, a non-parametric Plateau 
problem with partial free boundary pops-up;  
its solution turns out to be 
an area-minimizing surface of disc-type, having trace
half of a ``catenoid'' constrained to contain a segment (a radius of $\Om$). The intuitive reason for this is the following:
if we pick in the source domain 
$\Omega$ a generic circle surrounding the origin (the 
singular point of $u$), and we look 
at the corresponding values over it of a minimizing sequence $(v_k)$ of {\it smooth} maps approximating $u$  in $L^1(\Om, \R^2)$,
we can reasonably expect that these values almost fill a copy of $\mathbb S^1$ in the 
target plane.
The question then becomes: how can 
$v_k$ approximate $u$ in such a way that the area (in $\R^4$) of their graphs
is as small as possible? 
The answer given here is that
the best way is to construct, in a ``vertical'' copy of $\R^3$
obtained by dropping from $\R^4$ an appropriate coordinate,  half of a ``catenoid'' (together with 
its mirror reflection with respect to the horizonal plane), hinged exactly
on a radius of $\Om$; 
morally, this produce a sort of 
phantom optimal ``catenoidal'' tube joining the singularity
with the boundary of $\Om$.
As we shall see, a technical simplification in the formulation of the problem will consist
in doubling the rectangle $R_l$, in order to 
make the boundary of $\Om$ disappear in some sense, creating instead
another point singularity at distance $2l$ from the origin (a sort 
of image charge), where 
assigning a boundary condition similar to the one given above the 
origin.

To introduce the correct setting we need to fix some 
notation. 
Denote $R_{2\longR}:=(0,2l)\times (-1,1)$ the doubled rectangle,
and let $\partial_D\doubledrectangle
:=(\{0,2l\}\times[-1,1])\cup((0,2l)\times \{-1\})$,
now consisting of three sides of $\partial \doubledrectangle$.
We also introduce the map
$\varphi: \R^2 \rightarrow [0,1]$ as
\begin{align}\label{eq:varphi}
\varphi(w_1,w_2):=\begin{cases}
	\sqrt{1-w_2^2}&\text{if }|w_2|\leq 1,\\
	0&\text{otherwise,}
\end{cases}
\end{align}
which we will employ as Dirichlet boundary datum.
Let 
\begin{align*}
&S_{2l}:=\{\sigma:(0,1)\rightarrow \doubledrectangle \; \text{ Lipschitz and injective,} \;\sigma(0^+)=(0,1),\;\sigma(1^-)=(2l,1)\},\\
	&
\mathcal X_{D,\varphi}
:=\{\psi\in W^{1,1}(\doubledrectangle):\psi=\varphi\text{ on }\partial_D\doubledrectangle
\},
\end{align*}
where we have noted $\sigma( x_0^\pm):=\lim_{x\rightarrow x_0^\pm}\sigma(x)$. For any $\sigma \in S_{2l}$ we denote by $A_\sigma$ the open planar region 
enclosed between $\sigma((0,1))$ and $\partial_D \doubledrectangle$. Let 
us first consider the following minimum problem:
\begin{align}\label{plateau_intro1}
	\inf\{ \mathcal A(\psi,A_\sigma):(\sigma,\psi)\in S_{2l}\times 
	\mathcal X_{D,\varphi},\;\psi=0\text{ on }\sigma((0,1))\}.
\end{align} 
Since this problem in general has not a minimizer,  we 
need a relaxed formulation. 
However we prefer not to directly relax problem \eqref{plateau_intro1}, but 
instead we
reduce to a cartesian setting, where 
the free-boundary curve $\sigma$ becomes
 the graph of a function $h$ defined on $(0,2l)$  and $A_\sigma$
its subgraph $SG_h$: namely, we introduce
 \begin{align*}
	&
\widetilde {\mathcal H}_{2l}:=\{
	h:[0,2l]\rightarrow [-1,1] \; {\rm continuous},~ h(0)=h(2l)=1
	\},
\end{align*}
and for any $h\in \widetilde {\mathcal H}_{2l}$ set
$G_h:=\{(t,s)\in \doubledrectangle:s=h(t)\}$ and 
$SG_h:=\{(t,s)\in \doubledrectangle:s
< h(t)\}$ (see Fig. \ref{fig:graphofphi}). We then consider
the 
minimum problem
\begin{align}\label{eq:plateau_tilde_intro}
	\inf\{ \mathcal A(\psi,SG_h):(h,\psi)\in \widetilde {\mathcal H}_{2l}\times 
	\mathcal X_{D,\varphi},\;\psi=0\text{ on }G_h\}.
\end{align}
Focusing on this, we will 
show that
it is sufficient to restric attention to convex functions $h$, and thus we 
introduce a sort of relaxed functional, namely
	\begin{equation}
	\FB(\h,\psione):=\Aone(\psione;\doubledrectangle)-
	\Htwo(\doubledrectangle \setminus \Omegah)+\int_{\partialbar\doubledrectangle}  
	\vert \psione  -\dirdatum\vert d \Hone 
	+ \int_{
		\partial \doubledrectangle \setminus \partialbar\doubledrectangle
	}  \vert \psione \vert 
	~d \Hone,
	\label{eq:F_2l}
\end{equation} 
defined on  
	\begin{equation}\label{eq:X_2l_conv}
	X_{2\longR}^{\rm conv}
	:=
	\left\{(h,\psi): h \in \Hspace, \psi\in BV(\doubledrectangle, [0,1]),\psi = 0 \ 
	{\rm on}~ R_l \setminus SG_h
\right\}, 
	\end{equation}
	where
\begin{equation}\label{eq:space_of_h_in_the_doubled_interval}
\Hspace
=\big \{\h : [0,2 \longR] \to [-1,1], 
	~ \h {\rm~ convex}, ~
	\h(\axialcoordofcylinder)=\h(2\longR-\axialcoordofcylinder) 
	~\forall \axialcoordofcylinder \in [0,2\longR]
	\big \}.
\end{equation}
 Notice that,
with respect to $\widetilde {\mathcal H}_{2l}
\times \mathcal X_{D,\varphi}$, the class $X_{2l}^{\textrm{conv}}$ 
is obtained 
specializing the choice of $h$ but
generalizing the choice of $\psi$.
 
Then we will prove (Theorem \ref{teo:main1}) that 
\begin{align}\label{plateau_intro2}
\inf_{(h,\psi) \in 
	X_{2\longR}^{\rm conv}}	\FB(h,\psi)=\inf\{ \mathcal A(\psi,SG_h):(h,\psi)\in 
\widetilde {\mathcal H}_{2l}\times \mathcal X_{D,\varphi},
\;\psi=0\text{ on }G_h\},
\end{align} 
and that, for $\longR$ large enough, the infimum on the right-hand side is not attained in $\widetilde {\mathcal H}_{2l}\times \mathcal X_{D,\varphi}$ and equals $\pi$. 
Instead a minimizer $(\overline h,\overline \psi)
\in \widetilde {\mathcal H}_{2l}\times \mathcal X_{D,\varphi}$ exists for $\longR$ small, 
and $\overline\psi$ is 
real analytic 
in the interior of $SG_{\overline h}$;
furthermore, we show that 
$\overline h$ is smooth and convex, and 
$\overline\psi$ has vanishing 
trace on  $G_{\overline h}$. 

Thus, problem 
\eqref{eq:plateau_tilde_intro} and the minimization of the functional in \eqref{eq:F_2l}
are related through 
\eqref{plateau_intro2}; 
now, let us see how they
are related with \eqref{plateau_intro1}.
An equivalent formulation of \eqref{plateau_intro2}
is the following: 
for $\sigma\in S_{2l}$,
take the closed curve 
$\Gamma\subset\R^3$ defined by glueing the trace of $\sigma$ with the graph of 
$\varphi$ over $\partial_D \doubledrectangle$. 
We can then consider an  area-minimizing disc $\Sigma^+$ spanning $\Gamma$, solution of the 
classical parametric Plateau problem. 
If
$\sigma([0,1])$ 
is the graph of a function $h\in\widetilde{\mathcal H}_{2l}$, then
\begin{align}\label{min_problem_plateau}
	\inf\{ \mathcal A(\psi,SG_h):(h,\psi)\in \widetilde {\mathcal H}_{2l}\times 
	\mathcal X_{D,\varphi},\;\psi=0\text{ on }G_h\}=\inf \mathcal H^2(\Sigma^+),
\end{align} 
where the infimum on the right-hand side is computed over the set of all such 
curves $\sigma$ (see Corollary \ref{main_cor}). In turn, we will
prove that 
\begin{align}\label{equiv_1}
\inf \mathcal H^2(\Sigma^+)=	\inf\{ \mathcal A(\psi,A_\sigma):(\sigma,\psi)\in S_{2l}\times 
\mathcal X_{D,\varphi},\;\psi=0\text{ on }\sigma((0,1))\},
\end{align}
so that the original problem \eqref{plateau_intro1} is equivalent to 
\eqref{eq:plateau_tilde_intro}.

Indeed, for $\longR$ sufficiently
small, say $l
\in (0,l_0)$, the infimum on the left-hand side of \eqref{equiv_1} is attained by a disc-type surface $\Sigma^+$, and $\sigma([0,1])$ coincides with the graph of a smooth convex 
function $h\in \widetilde {\mathcal H}_{2l}$. Also, the surface $\Sigma_+$ is cartesian, i.e. is the graph of a function $\psi\in \mathcal X_{D,\varphi}$. On the contrary, for $l\geq l_0$, $\Sigma^+$ is degenerate, 
in the sense that if $\sigma_n((0,1))\subset \doubledrectangle$ is the 
free-boundary of $\Sigma^+_n$, where $(\Sigma_n^+)$ is 
a minimizing sequence of discs for the Plateau problem, 
then $\sigma_n((0,1))$ converges to 
$\partial_D \doubledrectangle$ and $\Sigma^+_n$ converges to 
two distinct half-circles of radius $1$, whose total area is $\pi$.

We do not know the explicit value of the threshold $l_0$.
However,
it is clear that $l_0>\frac12$ 
(see the discussion at the end of Section \ref{subsec:a_Plateau_problem_for_a_self-intersecting_boundary_space_curve} and Remark \ref{rem:threshold}).
Furthermore, if we double the surface $\Sigma^+$ by considering 
its symmetric with respect to the plane containing $\doubledrectangle$, and then 
taking the union $\Sigma$ of these two area-minimizing surfaces, 
it turns out that $\Sigma$ solves a non-standard Plateau problem, spanning a nonsimple curve which shows self-intersections 
(this is the union of $\Gamma$ with its symmetric with 
respect to $\doubledrectangle$, the obtained curve is the union of two circles connected by a segment, see Section \ref{subsec:a_Plateau_problem_for_a_self-intersecting_boundary_space_curve} and Fig. \ref{fig:curvagamma}). 
Again, the obtained
area-minimizing surface is a sort of catenoid forced to contain a segment
(see Fig. \ref{elliss_1}, left) for $\longR$ small, and two distinct discs spanning the two circles for $\longR$ large (Fig. \ref{elliss_1}, right).  

A related analysis of a general geometric setting for 
constrained non-parametric Plateau problems 
can be found in \cite{BMS}. Notice however that in \cite{BMS} it is assumed 
positiveness of the boundary datum,
while in our case it is crucial that 
$\varphi=0$ on $\{-1\}\times (0,2l)$; 
it must be observed that the vanishing of $\varphi$ a some points
is source
of a number of difficulties.
Furthermore we need here to relate the Plateau problem 
to the functional $\mathcal F_l$ in \eqref{eq:F_l}, analysis which is missing in \cite{BMS}. Indeed, in the special setting of the
present paper, some more precise description of solutions is necessary. Specifically (see Corollary \ref{cor_reg}) every solution will be continuous and null on its free boundary curve.

Before stating our main results, it is worth to point out that the 
solution surface that we obtain 
is related to the problem of relaxation of area functional in codimension $>1$. Indeed, the restriction of $\Sigma$ to the set $\overline B_1(0)\times [0,l]$ is a suitable projection in $\R^3$ of the vertical part of an optimal cartesian current with underlying map the vortex 
map (see the introduction of \cite{BES3}).

In Proposition \ref{modifications_of_h} below, we first prove that the infimum of 
$\FBl$ on the class 
$\oldDom$ can be equivalently computed on 
pairs $(h,\psi)\in \Wspace$, where the function $h$ is assumed to be convex, nonincreasing and continuous on $[0,\longR]$. Then, using a symmetry argument, it is easily seen that 
$$2\inf_{(h,\psi)\in \Wspace}\FBl(h,\psi)=   \inf_{(h,\psi) \in 
	X_{2\longR}^{\rm conv}}	\FB(h,\psi).$$
The main results of this paper are contained in the following
two theorems.

\begin{theorem}\label{teo:main1}
	There exists a solution of 
	\begin{equation}\label{eq:B_intro}
\min \big \{ \FB(\h,\psione):  (\h,\psione) \in  X_{2\longR}^{\rm conv}
		\big \}.
	\end{equation}
	Moreover, any 
minimizing pair $(h,\psi)$ in \eqref{eq:B_intro}
satisfies the following properties:
\begin{itemize}
\item[(1)]
$\psi$ is symmetric with respect to 
	$\{\axialcoordofcylinder=\longR\} \cap \doubledrectangle$;
\item[(2)]
 If $h$ is not identically $-1$, then 
	\begin{itemize}
		\item[(2i)] $h\in 
C^0([0,2\longR])$ and is analytic in 
$(0,2\longR)$,
 $h(0)=1=h(2\longR)$, and 
		$h >-1$ in $(0,2\longR)$;
		\item[(2ii)] $\psi$ is analytic 
		and strictly positive
		in $\Omegah$; 
		\item[(2iii)] $\psi$ 
		is continuous up to the boundary of $\Omegah$, 
		and attains the boundary condition, \textit{i.e.}, 
for $(\tcoord, \scoord)
		\in 
		\partial SG_{h}$,  
		\begin{equation}\label{eq:psionestarboundary_intro}
			\psi
(\axialcoordofcylinder,\scoord)=\begin{cases}
				0& \text{ if}\quad \scoord=-1 
\\ 
				0& \text{ if}\quad 
\scoord=h(\axialcoordofcylinder),\\
				\sqrt{1-\scoord^2}&\text{ if}\quad \axialcoordofcylinder=0 \text{ or } \axialcoordofcylinder=2\longR,
			\end{cases} 
		\end{equation}
		hence 
		\begin{equation}\label{eq:FAequalareaofhstar_intro}
			\FB(h,\psi)=\mathcal A(\psi, \Omegah);
		\end{equation}
		\item[(2iv)] 
			$\psi < \varphi {\rm ~in}~ \doubledrectangle$.
	\end{itemize}
	\end{itemize}
\end{theorem}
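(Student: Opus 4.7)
The plan is to combine the direct method of the calculus of variations, the De Giorgi--Giusti regularity theory for scalar $BV$ area minimizers, and a strict supersolution comparison for the bound $\psione<\dirdatum$.

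For existence, I take a minimizing sequence $(\h_n,\psione_n)\subset X_{2\longR}^{\rm conv}$. The convex symmetric $\h_n:[0,2\longR]\to [-1,1]$ are uniformly Lipschitz on every compact subinterval of $(0,2\longR)$, so Helly's theorem provides a subsequence $\h_n\to \h\in \Hspace$ locally uniformly. $BV$-compactness (using that $\Aone$ controls $|D\psione_n|$ while $\psione_n\in[0,1]$) yields $\psione_n\to\psione$ in $L^1$ with $\psione\in BV(\doubledrectangle;[0,1])$, and the constraint $\psione=0$ outside $\Omegah$ passes to the limit thanks to the Hausdorff convergence of the convex subgraphs. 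For lower semicontinuity I use the decomposition
\begin{equation*}
\FB(\h,\psione)=\Aone(\psione;\Omegah)+\int_{\graphh}\psione^-\,d\Hone+\int_{\partialbar\doubledrectangle}|\psione-\dirdatum|\,d\Hone+\int_{(0,2\longR)\times\{1\}}|\psione|\,d\Hone,
\end{equation*}
and extend each $\psione_n$ to a slightly larger open rectangle by $\dirdatum$ across $\partialbar\doubledrectangle$ and by $0$ across the top side: the right-hand side agrees (up to a constant depending only on $\h_n$) with the relaxed area of the extension, which is $L^1$-lower semicontinuous, so $(\h,\psione)$ is a minimizer.

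Now fix any minimizer $(\h,\psione)$ with $\h\not\equiv -1$, so that $\Omegah$ is a nonempty open convex set symmetric about $\{\axialcoordofcylinder=\longR\}$. Keeping $\h$ fixed, $\psione|_{\Omegah}$ is a scalar $BV$ minimizer of the area functional; De Giorgi--Giusti regularity yields $\psione\in C^\infty(\Omegah)$ solving the minimal surface equation classically, and elliptic regularity upgrades this to analyticity, proving (2ii). The strong maximum principle forces either $\psione\equiv 0$ or $\psione>0$; the first alternative is excluded by the competitor computation $\FB(\h,0)=\Htwo(\Omegah)+\pi>\pi=\FB(-1,0)$. Continuity of $\psione$ up to $\partial \Omegah$ with the prescribed traces \eqref{eq:psionestarboundary_intro} (whence (2iii) and \eqref{eq:FAequalareaofhstar_intro}) follows from standard barrier constructions: $\dirdatum$ itself serves as an upper barrier from the lateral sides and forces $\psione\to 0$ on the top and bottom, while on $\graphh$ the vanishing trace is enforced by observing that a positive trace could be strictly reduced by multiplying $\psione$ with a cut-off supported in a neighborhood of $\graphh$. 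The symmetry (1) is then a uniqueness consequence: the reflection $R(\axialcoordofcylinder,\scoord)=(2\longR-\axialcoordofcylinder,\scoord)$ preserves $\h$, $X_{2\longR}^{\rm conv}$, $\dirdatum$ and $\FB$, so $\psione\circ R$ is a minimizer with the same continuous Dirichlet data as $\psione$ on $\partial \Omegah$, whence $\psione\circ R=\psione$ by uniqueness for the Dirichlet problem of the minimal surface equation.

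For (2iv) a direct computation shows
\begin{equation*}
\mathrm{div}\Big(\frac{\grad\dirdatum}{\sqrt{1+|\grad\dirdatum|^2}}\Big)=-1\quad \text{on }\{|\scoord|<1\},
\end{equation*}
so $\dirdatum$ is a strict supersolution of the minimal surface equation; combined with $\psione\le \dirdatum$ on $\partial \Omegah$, the strong maximum principle gives $\psione<\dirdatum$ in $\Omegah$, while outside $\Omegah$ the inequality is immediate from $\psione=0<\dirdatum$. For (2i), the first variation of $\FB$ with respect to $\h$ (at fixed $\psione$) yields a free-boundary Euler--Lagrange condition that couples the slope of $\graphh$ to the normal derivative of $\psione$ on $\graphh$; together with the Dirichlet trace $\psione=0$ on $\graphh$, a partial hodograph/Legendre transform recasts the system as a uniformly elliptic one with real analytic solutions, proving $\h$ analytic on $(0,2\longR)$, while $\h(0)=\h(2\longR)=1$ is part of the definition of $\Hspace$ and $\h>-1$ on $(0,2\longR)$ follows by comparison with explicit catenoid-type competitors. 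The main obstacle I anticipate is precisely (2i): one must first secure quantitative non-degeneracy of $\psione$ near $\graphh$ (a uniform lower bound on the normal derivative) before the hodograph transform applies, and then handle the two corner points where $\graphh$ meets $\partialbar\doubledrectangle$, where the free-boundary and Dirichlet conditions collide.
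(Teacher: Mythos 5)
Your existence proof, the interior analyticity and positivity of $\psione$ in (2ii), and the strict-supersolution comparison for (2iv) (based on $\mathrm{div}\big(\nabla\dirdatum/\sqrt{1+|\nabla\dirdatum|^2}\big)=-1$) are sound and essentially match the paper; (2iv) is in fact a clean alternative to the paper's convex-hull argument. The genuine gaps are concentrated exactly where the paper spends nearly all of its effort, namely the free-boundary part of (2iii) and item (2i). Your cut-off argument for the vanishing trace on $\graphh$ does not work: by \eqref{eq:FA} the functional already contains the exact penalization $\int_{\graphh\setminus\{w_2=-1\}}|\psione^-|\,d\Hone$, which is precisely the area of the vertical wall closing the generalized graph of $\psione$ down to height $0$ over $\graphh$. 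Truncating $\psione$ in a neighbourhood of $\graphh$ trades this wall for additional area of the modified graph, and in the relaxed area these two contributions balance rather than strictly decrease, so no local modification forces $\psione^-=0$ there. Likewise, continuity up to a free boundary which is a priori only convex (hence possibly with corners) and which meets $\partial_D\doubledrectangle$ at the degenerate points $(0,1)$ and $(2\longR,1)$ where $\dirdatum$ vanishes is not a ``standard barrier'' matter. The paper's route is entirely different and much heavier: it approximates $(\h,\psione)$, doubles the graph and compares it with an area-minimizing disc spanning a self-intersecting curve, proves via Rado-type arguments that this disc is semicartesian and a graph of a $W^{1,1}$ function over the subgraph of a convex $\widetilde h$ (Theorem \ref{teo:boundary_regularity}, Steps 2--6), and only then shows by analyticity and the maximum principle that the original minimizer must coincide with this regular one (Corollary \ref{cor_reg}).

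In (2i) there is also a concrete error: you claim $h(0)=h(2\longR)=1$ is part of the definition of $\Hspace$, but \eqref{eq:space_of_h_in_the_doubled_interval} only requires convexity and symmetry; the possibility $h(0)<1$ is exactly what the term $\int_{\Lh}\dirdatum\,d\Hone$ penalizes, and the equality $h(0)=1$ is a nontrivial conclusion (the paper obtains it because a nonempty $\Lh$ would force a flat vertical piece of the analytic minimal disc $\Sigma^+$). Your hodograph/Legendre route to the analyticity of $h$ is, as you yourself note, incomplete without a uniform lower bound on the normal derivative of $\psione$ along $\graphh$ and a treatment of the corner points; the paper avoids this entirely by reading the regularity of $h$ off the analytic conformal parametrization of $\Sigma$ via the implicit function theorem (Corollary \ref{cor:smoothness_of_h}). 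Finally, ``$h>-1$ by comparison with catenoid-type competitors'' does not prove the claim for an arbitrary minimizer: one must show that \emph{every} $h$ touching $-1$ is strictly beaten, which the paper does through the slicing estimate of Lemma \ref{lemma_reg2_bis} comparing with the degenerate pair $(-1,0)$. Your symmetry argument for (1) also invokes uniqueness for a Dirichlet problem whose boundary data on $\graphh$ are not yet known to be attained; the paper instead compares the energies of the two halves and uses unique continuation of analytic functions on the connected set $\Omegah$.
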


\begin{theorem}\label{teo:main2}
Problem \eqref{value_main}
has a solution,
and 
\begin{align}
2\inf_{(h,\psi)\in X_l} \mathcal F_l(h,\psi)&=	\inf\{ \mathcal A(\psi,A_\sigma):(\sigma,\psi)\in S_{2l}\times 
\mathcal X_{D,\varphi},\;\psi=0\text{ on }\sigma((0,1))\}\nonumber\\
&=\inf\{ \mathcal A(\psi,SG_h):(h,\psi)\in \widetilde {\mathcal H}_{2l}\times 
\mathcal X_{D,\varphi},\;\psi=0\text{ on }G_h\}\nonumber\\
&=\inf_{(h,\psi) \in 
	X_{2\longR}^{\rm conv}}	\FB(h,\psi).
\end{align}
Furthermore, there is a constant $l_0>0$ such that
the following holds:
\begin{itemize}
\item[(i)]
 for $l \in (0,l_0)$ there is a minimizer $(h^\star,\psi^\star)\in  X_{2\longR}^{\rm conv}$ of $\FB$ satisfying conditions (1) and (2i)-(2iv) of Theorem \ref{teo:main1}, 
which is also a minimizer of 
\eqref{eq:plateau_tilde_intro}, $(h^\star\res [0,l],\psi^\star\res R_l)$ is a minimizer of $\mathcal F_l$, and the pair
$(\sigma^*, \psi^*)$, with
 $\sigma^\star(t):=(2lt,h^\star(t))$,
is a solution of \eqref{plateau_intro1};
\item[(ii)]
 For $l\geq l_0$ a common solution is $h^\star\equiv-1$ and $\psi\equiv0$.
\end{itemize}
\end{theorem}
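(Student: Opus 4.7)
My plan has three steps: assemble the chain of equalities, prove existence and handle the small-$l$ regime, then treat the large-$l$ regime.

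For the equalities, I first reduce $\inf_{X_l}\mathcal F_l$ to $\inf_{X_{2l}^{\mathrm{conv}}}\FB$. By Proposition \ref{modifications_of_h}, it suffices to consider $(h,\psi)\in X_l$ with $h$ convex, nonincreasing and continuous on $[0,l]$. For such a pair, I define $(\tilde h,\tilde\psi)$ on $R_{2l}$ by even reflection across $\{w_1=l\}$; then $(\tilde h,\tilde\psi)\in X_{2l}^{\mathrm{conv}}$, and since $\varphi(0,\cdot)=\varphi(2l,\cdot)$ and the reflection introduces no jump at $\{w_1=l\}$, a term-by-term computation gives $\FB(\tilde h,\tilde\psi)=2\mathcal F_l(h,\psi)$. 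Conversely, property~(1) of Theorem \ref{teo:main1} guarantees that every minimizer of $\FB$ is symmetric across $\{w_1=l\}$, so its restriction to $R_l$ is admissible for $\mathcal F_l$ and has half the energy. This proves $2\inf\mathcal F_l=\inf\FB$; the other two equalities are precisely \eqref{plateau_intro2} and \eqref{equiv_1}.

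For existence and the regime $l<l_0$, Theorem \ref{teo:main1} provides a minimizer $(h^\star,\psi^\star)\in X_{2l}^{\mathrm{conv}}$ of $\FB$; through the symmetric reduction above this yields a minimizer of $\mathcal F_l$ and hence a solution of \eqref{value_main}. A direct computation with $(h\equiv -1,\psi\equiv 0)\in X_{2l}^{\mathrm{conv}}$ yields $\FB(-1,0)=2\int_{-1}^1\sqrt{1-s^2}\,ds=\pi$ (only the two vertical Dirichlet sides contribute), so $\mu(l):=\min_{X_{2l}^{\mathrm{conv}}}\FB\leq \pi$. Set $l_0:=\sup\{l>0:\mu(l)<\pi\}$. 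To check $l_0\geq 1/2$, I use the explicit competitor $h\equiv 1$, $\psi(w_1,w_2)=\sqrt{1-w_2^2}$: all boundary terms vanish and $\FB=\int_0^{2l}\!\int_{-1}^1(1-w_2^2)^{-1/2}dw_2\,dw_1=2\pi l<\pi$ whenever $l<1/2$. For $l\in(0,l_0)$ one has $\FB(h^\star,\psi^\star)=\mu(l)<\pi$, so the minimizer is not the degenerate pair and $h^\star\not\equiv -1$; properties~(1) and~(2i)--(2iv) of Theorem \ref{teo:main1} then apply, and together with \eqref{eq:FAequalareaofhstar_intro} they show $(h^\star,\psi^\star)\in \widetilde{\mathcal H}_{2l}\times\mathcal X_{D,\varphi}$ realizes the minimum in \eqref{eq:plateau_tilde_intro}. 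Setting $\sigma^\star(t):=(2lt,h^\star(t))$ and invoking \eqref{equiv_1}, the pair $(\sigma^\star,\psi^\star)$ also solves \eqref{plateau_intro1}, completing~(i).

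For $l\geq l_0$, the definition of $l_0$ forces $\mu(l)\geq \pi$; combined with $\mu(l)\leq\pi$ this gives $\mu(l)=\pi$, realized by $(-1,0)$, proving~(ii). The main obstacle I anticipate is showing that $\{l:\mu(l)<\pi\}$ is genuinely an interval $(0,l_0)$, equivalently that $l\mapsto\mu(l)$ is monotone nondecreasing, so that conclusion~(i) applies on \emph{all} of $(0,l_0)$ rather than a proper subset. I expect this to follow from a horizontal rescaling argument: given a nontrivial minimizer $(h^\star,\psi^\star)\in X_{2l'}^{\mathrm{conv}}$ and $l<l'$, the pair $(h^\star(\lambda \cdot),\psi^\star(\lambda\cdot,\cdot))$ with $\lambda=l'/l>1$ is an admissible competitor on $R_{2l}$ (convexity and symmetry of $h^\star$ are preserved), and a direct computation controls the behavior of the area functional and boundary terms under this compression, using the vanishing-trace property~(2iii) of Theorem \ref{teo:main1} to avoid spurious jumps at $\{w_1=0,2l\}$.
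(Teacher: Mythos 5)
Your reduction of $2\inf_{X_l}\FBl$ to $\inf_{X_{2\longR}^{\rm conv}}\FB$ (Proposition \ref{modifications_of_h} plus reflection and the symmetry statement (1) of Theorem \ref{teo:main1}) matches the paper, and your explicit competitors $(-1,0)$ and $(1,\varphi)$ correctly give $\mu(l)\leq\pi$ and $\mu(l)<\pi$ for $l<1/2$. But there is a genuine gap at the heart of the argument: you dispose of the second and third equalities in the displayed chain by saying they ``are precisely \eqref{plateau_intro2} and \eqref{equiv_1}''. Those two identities are not available facts — they are announced in the introduction as results \emph{to be proven}, and they are exactly the nontrivial content of Theorem \ref{teo:main2}. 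The classes involved are not nested in any useful way ($X_{2\longR}^{\rm conv}$ restricts $h$ to be convex but relaxes $\psi$ to $BV$ without imposing the boundary datum, while $\widetilde{\mathcal H}_{2l}\times\mathcal X_{D,\varphi}$ allows arbitrary continuous $h$ but forces $\psi\in W^{1,1}$ with $\psi=\varphi$ on $\partial_D\doubledrectangle$; the class $S_{2l}$ allows non-graphical free curves), so neither inequality is formal. In the paper these equalities are consequences of the boundary regularity Theorem \ref{teo:boundary_regularity} and Corollary \ref{main_cor}: one compares the graph of a (mollified) competitor with the area-minimizing disc spanning the curve $\Gamma$, and then proves that this disc is Cartesian over $\{w_3=0\}$ with free trace the graph of a \emph{convex} function that is continuous and on which $\widetilde\psi$ vanishes (steps 4--6 there). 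This is what lets one pass back and forth between the three formulations, and it also handles the degenerate regime, where one must separately check that the infimum over the Sobolev classes still equals $\pi$ via a collapsing sequence even though it is not attained. None of this appears in your proposal, so the chain of equalities — and with it the claim in (i) that $(h^\star,\psi^\star)$ minimizes \eqref{eq:plateau_tilde_intro} and that $(\sigma^\star,\psi^\star)$ solves \eqref{plateau_intro1} — is not established.

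On the threshold: your definition $l_0:=\sup\{l:\mu(l)<\pi\}$ and the horizontal-compression argument for monotonicity of $\mu$ are sound ideas (compression by $\lambda>1$ strictly decreases the area integrand and, thanks to (2iii), creates no new boundary mismatch), and in fact this is more explicit than what the paper offers, which essentially asserts the dichotomy from Remark \ref{rem:threshold}. You should, however, carry the computation out rather than flag it as an anticipated obstacle, and address the endpoint $l=l_0$ (your definition a priori allows $\mu(l_0)<\pi$, in which case (ii) would fail exactly at $l_0$).
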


The organization of the paper is as follows. Section 
\ref{sec:notation_and_preliminaries} contains some notation used throughout the paper.
Section \ref{sec:results} contains
some preliminary results on the functional $\FBl$ and its
doubled, 
on their domains and on relaxation.
Section \ref{sec:proof_of_theorems} contains the proofs of Theorems \ref{teo:main1} and \ref{teo:main2};
the most difficult part is contained in 
the regularity Theorem \ref{teo:boundary_regularity}.

\section{Notation and preliminaries}\label{sec:notation_and_preliminaries}
Let $\openset\subset\R^n$ be an open set, and let $v\in L^1(\openset)$.
We denote by $R(v)$ the set of regular points of $v$, 
{\it i.e.}, the set consisting of all $x\in \openset$ 
which are Lebesgue points for 
$v$ and $v(x)$ 
coincides with the Lebesgue value of $v$ at $x$. We  denote by $\overline v$ a good representative of $v\in L^1(\openset)$, i.e. a function such that $\overline v(x)$ coincides with the Lebesgue value of $v$ at $x$, for all $x\in R(v)$. If in addition $v$ is a function of bounded variation, we denote by $R_v$ the set of regular points $x\in R(v)$ such that $v$ is approximately differentiable at $x$. Notice that if $v\in BV(\openset)$ the set $R(v)\setminus R_v$ is $\mathcal L^n$-negligible.

 For a function $v\in L^1(\openset)\setminus BV(\openset)$ we also set
\begin{equation}\label{eq_GSG'}
 G'_v:=\{(x,\overline v(x))\in R(v)\times \R\}, \qquad
 SG'_v:=\{(x,y)\in R(v)\times \R:y< \overline v(x)\}.
 \end{equation}
If in addition $v\in BV(\openset)$, the previous sets are classically defined as above with $R(v)$ replaced by $R_v$, namely
 \begin{equation*}\label{eq_GSG}
 	G_v:=\{(x,v(x))\in R_v\times \R\}, \qquad
 	SG_v:=\{(x,y)\in R_v\times \R:y< v(x)\}.
 \end{equation*}
 Given a $2$-dimensional rectifiable set $S\subset U \subset \R^3$, $U$ open,
 and a tangent 
unit simple $2$-vector $\tau$ to it, we denote by 
$\jump{S}$ the current given by integration over $S$, namely
$$
\jump{S}(\openset)=\int_S\textlangle\tau(x),\omega(x)\textrangle~ 
d\mathcal H^{2}(x),
$$ 
$\omega$ a smooth $2$-form with compact support in $U$, 
see \cite{Krantz_Parks:08}, \cite{GiMoSu:98}, \cite{Federer:69}.
 We often will identify $SG'_v$ with the integral $3$-current $\jump{SG'_v}\in \mathcal D_3(\openset\times\R)$. If $v$ is a 
function of bounded variation, $\openset\setminus R_v$ has 
zero Lebesgue measure, so that the current $ \jump{SG_v}$ 
coincides with the standard integration over the subgraph of $v$.
It is well-known that the  perimeter of $SG_v$ in $\openset\times \R$ 
coincides with $\relarea(v,\openset)$.

The support of the boundary of $\jump{SG_v}$ 
includes the graph $G_v$, but in general consists also of additional parts, called vertical. We denote by
$$\mathcal G_v:=\partial \jump{SG_v}\res(\openset\times\R),$$
the generalized graph of $v$, which is a $2$-integral current supported on $\partial^*SG_v$, the reduced boundary of $SG_v$ in $\openset\times\R$. 

Let $\widehat \openset \subset \R^2$ 
be a bounded  open set such that $\openset\subseteq \widehat\openset$, 
and suppose that 
$L := \widehat \openset\cap
\partial \openset$ is a rectifiable 
curve. Given $\scalarfunction 
\in BV(\openset)$ and  a $W^{1,1}$ function $\varphi:
\widehat \openset \to \R$, we can consider 
\begin{align*}
 \overline{\scalarfunction}:=\begin{cases}
               f & \text{on }\openset,
\\
               \varphi&\text{on }\widehat \openset\setminus\openset.
               \end{cases}
\end{align*}
Then (see \cite{Giusti:84}, \cite{AmFuPa:00})
\begin{align*}
 \relarea(\overline{\scalarfunction},\widehat\openset)=
\relarea(\scalarfunction,\openset)+\int_{L}|\scalarfunction-\varphi|d\mathcal H^{1}
+\relarea(\varphi,\widehat\openset
\setminus\overline{\openset}).
\end{align*}

\subsection{Plateau problem in parametric form}\label{subsec:Plateau_problem_in_parametric_form}

We report here some 
results about the classical solution to the 
disc-type Plateau problem. Let $\unitdisc$
denote the open disc of radius $1$ centered at the origin of $\R^2$. 
If $\Gamma\subset\R^3$ is a closed rectifiable Jordan curve, the Plateau problem consists into minimize the functional
\begin{align}
\label{plateau}
\mathcal P_\Gamma(X):= \int_{\unitdisc}|\partial_{x_1}X\wedge\partial_{x_2} X|dx_1dx_2,
\end{align}
on the class of all functions $X\in C^0(\overline{B}_1;\R^3)\cap H^1(\unitdisc;\R^3)$ with
 $X\res\partial \unitdisc$ being a weakly monotonic parametrization of the curve $\Gamma$. 
The functional \eqref{plateau} measures the area (with multiplicity) of the surface $X(\unitdisc)$. 

A solution $X_\Gamma$ to the Plateau problem exists and satisfies the properties: 
it is harmonic (hence analytic)
\begin{align*}
 \Delta X_\Gamma=0\qquad\text{ in }D,
\end{align*}
it is a conformal parametrization 
\begin{align*}
 |\partial_{x_1}X_\Gamma|^2=|\partial_{x_2}X_\Gamma|^2,
\qquad \partial_{x_1}X_\Gamma\cdot \partial_{x_2}
X_\Gamma=0\qquad\text{ in }\unitdisc,
\end{align*}
and $X_\Gamma\res\partial \unitdisc$ is a strictly monotonic parametrization of $\Gamma$.
We will say that the surface $X_\Gamma(\unitdisc)$ has the topology of the disc.

Thanks to the properties above it is always possible, 
with the aid of a conformal change of variables, to parametrize $X(\unitdisc)$ over any simply connected bounded domain. In other words, if $U$ is any such domain, and if $\Phi:U\rightarrow \unitdisc$ is any conformal homeomorphism, then $X\circ\Phi$ is a solution to the Plateau problem on $U$.

\subsection{A Plateau problem for a self-intersecting boundary space curve}
\label{subsec:a_Plateau_problem_for_a_self-intersecting_boundary_space_curve}

The classical disc-type 
Plateau problem is solved for boundary value a simple Jordan space curve, 
in particular $\Gamma$ 
does not have self-intersections. Here we will treat a specific Plateau problem where the curve $\Gamma$ has non-trivial intersections, and it overlaps itself on a segment which is parametrized two times with opposite directions.

Specifically, we consider the cylinder $(0,2l)\times \unitdisc$ 
and 
two circles $\mathcal C_1, \mathcal C_2$
 which are the boundaries of its two circular bases, namely 
$\mathcal C_1:=\{0\}\times \partial \unitdisc$ and $\mathcal C_{2}:=\{2l\}\times \partial \unitdisc$. 
Then we take the segment $(0,2l)\times\{1\}\times\{0\}$. If $\gamma_0$ 
is a monotonic parametrization of this segment, 
starting from $(0,1,0)$ up to $(2l,1,0)$, $\gamma_1$ is a 
monotonic parametrization of $\mathcal C_1$ starting from the point $(0,1,0)$ and ending at the same point, and  $\gamma_2$ a parametrization of $\mathcal C_2$ with initial and final point $(2l,1,0)$
with the same orientation of $\mathcal C_1$, then we consider the parametrization 
\begin{align}\label{curvegamma_def}
 \gamma:=\gamma_1\star\gamma_0\star(-\gamma_2)\star(-\gamma_0),
\end{align}
(read from left to right)
which is a closed curve in $\R^3$ which 
travels two times across the segment $(0,2l)\times\{1\}\times\{0\}$ with opposite directions (the orientation of this curve is depicted in Fig. \ref{fig:curvagamma}).
We want to solve the Plateau problem with $\Gamma$ 
to be the image of $\gamma$.

\begin{figure}
\begin{center}
    \includegraphics[width=0.3\textwidth]{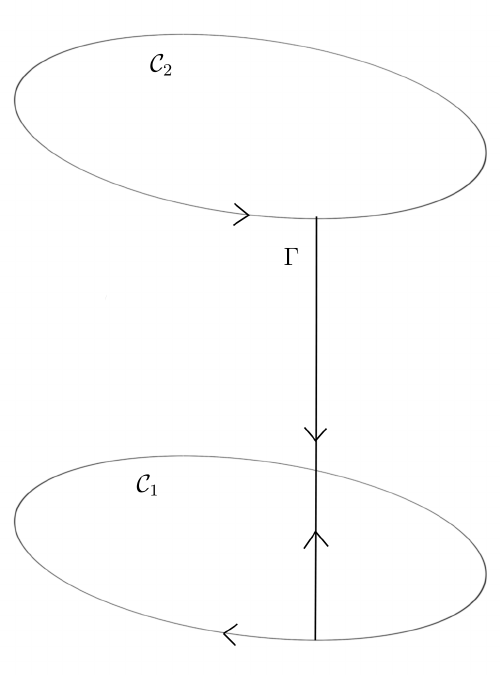}
 \caption{The self-overlapping curve $\Gamma$ with its orientation. }
\label{fig:curvagamma}
\end{center}
\end{figure}

The existence of solutions to the 
Plateau problem spanning self-intersecting boundaries has been addressed in \cite{Hass:91}, whose results have been recently improved in \cite{Creutz:20}. Without entering deeply into the details, it is known that, depending on the geometry of $\gamma$ (in this case, depending on the distance between the two circles $\mathcal C_1$ and $\mathcal C_2$) two kind of solutions are expected:
\begin{itemize}
 \item[(a)] The solution 
consists of two discs filling $\mathcal C_1$ and $\mathcal C_2$, see Fig. \ref{elliss_1}, right. In this case, a parametrization of it  $X:\overline \unitdisc
\rightarrow\R^3$ can be chosen so that, if $L_1$ and $L_2$ are two parallel chords in $\unitdisc$ dividing $\unitdisc$ in three sectors, then $X$ restricted to the sector enclosed between $L_1$ and $L_2$ parametrizes the segment $\gamma_0$ (and then its resulting area is null), $X(L_1)=P_1$ and $X(L_2)=P_2$ are the two endpoints of $\gamma_0$, and $X$ restricted to the sectors between $L_i$, $i=1,2$, and $\partial \unitdisc$ parametrizes the disc filling $\mathcal C_i$, $i=1,2$. Moreover the map $X$ can be still taken Sobolev regular 
(see \cite{Creutz:20} for details).

 \item[(b)] There is a classical solution, 
{\it i.e.}, there is a harmonic and conformal map $X:\unitdisc\rightarrow\R^3$,
continuous up to the boundary of $\unitdisc$, such that $X\res \partial \unitdisc$ is a weakly monotonic parametrization of $\Gamma$. 
In this case the resulting minimal surface is a sort of catenoid attached to the segment $(0,2l)\times \{(1,0)\}$ (see Fig. \ref{elliss_1} left).
\end{itemize}

\begin{remark}\label{rem:threshold}
We expect that there is a threshold $l_0$ such that if $l< l_0$ an area-minimizing disc with boundary $\gamma$ is of the form (b),
and for values $l>l_0$ the two discs have minimal area. We do not find explicitly $l_0$ but 
it is easy to see that if $l\leq \frac12$ an area-minimizing disc with boundary $\gamma$ has always less area than the solution with two discs. Indeed, the area of the two discs is $2\pi$, whereas we can always compare 
the area of the surface  $\Sigma$ as in (b) with the area  of the lateral surface of the cylinder $(0,2l)\times \unitdisc$, that is $4l\pi$. Hence $\mathcal H^2(\Sigma)<4l\pi\leq 2\pi$ for $l\leq \frac12$.	
\end{remark}

		\begin{figure}
		\includegraphics[width=0.8\textwidth]{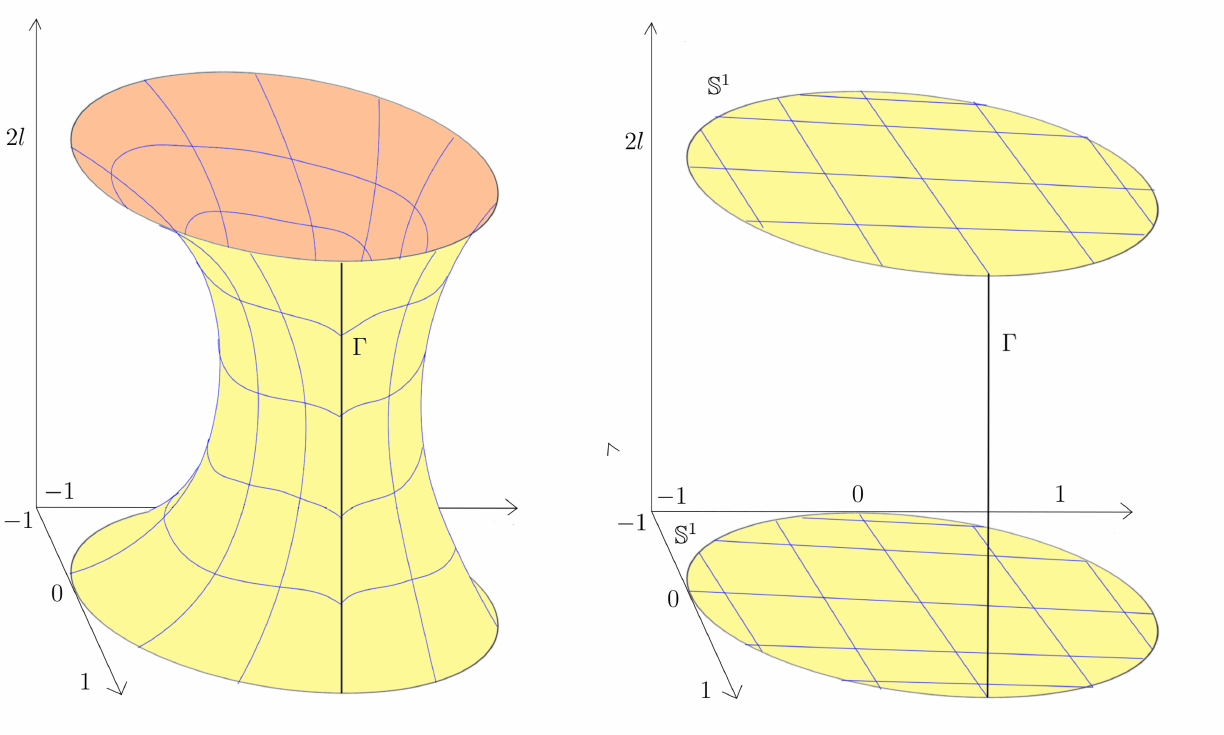}
		\caption{on the left the shape of a possible solution to the Plateau problem with boundary $\Gamma$. 
On the right another solution to the Plateau problem with boundary $\Gamma$.
See Section \ref{subsec:a_Plateau_problem_for_a_self-intersecting_boundary_space_curve}}\label{elliss_1}
		\end{figure}

\section{Preliminary results on the functional $\FBl$ 
and its doubled}\label{sec:results}

For all $\varrho>0$ we denote 
$R_\varrho:=(0,\varrho)\times (-1,1)$.
For any $h\in L^\infty([0,\varrho];[-1,1])$,
we denote
by $G'_h$
 and $SG'_h$ the sets in \eqref{eq_GSG'}.
We recall that  $\varphi$ has been defined in \eqref{eq:varphi}.
\begin{definition}[\textbf{The functional $\FBl$}]
\label{def:the_functional_Fhat}
Let $l>0$ be fixed. Given 
$h\in L^\infty([0,\longR];[-1,1])$ and $\psi \in \BV(R_l;[0,1])$
we define 
\begin{equation}\label{eq:F'}
  \FBl(h,\psi):=\areaonecod
(\psi, R_l)-\mathcal H^2(R_l\setminus SG'_h)+\int_{\partial_DR_l}|\psi-\varphi|~d\mathcal H^1+\int_{(0,\longR)\times \{1\}}|\psi|~d\mathcal H^1.
\end{equation}
\end{definition}
We notice that if $h$ is also a function of bounded variation, then we can also write
\begin{equation}\label{eq:F}
	\FBl(h,\psi):=\areaonecod
	(\psi, R_l)-\mathcal H^2(R_l\setminus SG_h)+\int_{\partial_DR_l}|\psi-\varphi|~d\mathcal H^1+\int_{(0,\longR)\times \{1\}}|\psi|~d\mathcal H^1.
\end{equation}
We further remember from \eqref{eq:X_l_intro} the definition of $\oldDom$:
\begin{equation}\label{eq:X_l}
\oldDom
:=\{(h,\psi):
h\in L^\infty([0,\longR];[-1,1]), \psi \in \BV(R_l;[0,1]),\psi=0 ~{\rm in ~}R_l\setminus SG'_h
\}.
\end{equation}
In this section our concern is the analysis of the minimum problem
\begin{align}\label{min_Fbl}
\inf_{(h,\psi)\in \oldDom}\FBl(h,\psi).
\end{align}
Notice that in minimizing $\FBl$ we have a free boundary condition on the edge $\{l\}\times [-1,1]$.

\begin{remark}\label{leb_point}
 Let $(h,\psi) \in \oldDom$. 
If $t_0\in(0,\longR)$ is a regular point for $h$ (i.e. $t_0\in R(h)$) and if $\overline h(t_0)<1$, then the trace of $\psi$ over the segment $\{w_1=t_0,\;\overline h(t_0)\leq w_2\leq 1\}$ 
vanishes. Indeed
for any $\eta>0$ we can find $\delta_\eta>0$ such that  
 \begin{align}\label{markov1}
  \frac{1}{2\delta}\int_{t_0-\delta}^{t_0+\delta}|h(w_1)-\overline h(t_0)|~dw_1<\eta
\qquad 
\forall
\delta \in (0,\delta_\eta).
 \end{align}
Let now $s_0\in (-1,1)$ be such that $\overline h(t_0)<s_0\leq 1$ ({\it i.e.}, $(t_0,s_0)\in \{w_1=t_0,\;w_2> \overline h(t_0)\}$), and set
$2\Delta:=s_0-\overline h(t_0)$.
By 
Chebyschev inequality and \eqref{markov1} it follows that 
\begin{align}\label{markov2}
 \mathcal H^1(B_\Delta)\leq \frac{2\delta\eta}{\Delta}
\qquad {\rm where}~ \ \  
B_\Delta:=\{w_1\in(t_0-\delta,t_0+\delta):|h(w_1)-\overline h(t_0)|>\Delta\}.
\end{align}
Then, for any $\xi\in(0,\Delta)$ we 
infer\footnote{In the first inequality we have used that $0 \leq \psi\leq1$; 
in the second inequality 
that $SG'_h$ is the subgraph of $h$ in $(0,\longR)\times(-1,1)$;
 in the third inequality we have used that $s_0-h(t_0)=2\Delta$ and that $\xi<\Delta$. }
\begin{equation}
\label{markov3}
\begin{aligned}
 &\frac{1}{2\delta}\int_{t_0-\delta}^{t_0+\delta}
\int_{s_0-\xi}^{s_0+\xi}
\psi(w_1,w_2)~dw_2 dw_1 
\leq  \frac{1}{2\delta}\int_{t_0-\delta}^{t_0+\delta}
\int_{s_0-\xi}^{s_0+\xi}\chi^{}_{\{\psi>0\}}(w_1,w_2)~dw_2 dw_1
\\
\leq & \frac{1}{2\delta}\int_{t_0-\delta}^{t_0+\delta}
\int_{s_0-\xi}^{s_0+\xi}\chi^{}_{SG_h}(w_1,w_2)~dw_2dw_1
\leq 
\frac{\xi}{\delta}\int_{t_0-\delta}^{t_0+\delta}
\chi_{B_\Delta}(w_1)
dw_1\leq \frac{2\xi\eta}{\Delta},
\end{aligned}
\end{equation}
where the penultimate inequality follows from the inclusions 
\begin{align*}
SG'_h\cap\big(
[t_0-\delta,t_0+\delta]\times[s_0-\xi,s_0+\xi]\big)&\subseteq SG'_h\cap \big([t_0-\delta,t_0+\delta]\times[s_0-\Delta,s_0+\Delta]\big)\\
&\subseteq  B_\Delta\times[s_0-\Delta,s_0+\Delta],
\end{align*}
and the last inequality follows from \eqref{markov2}. 
Now \eqref{markov3} 
entails the claim by the arbitrariness of $\eta>0$ and since $\psi\geq0$.
\end{remark}

Now, we refine the choice of the class of pairs $(h,\psi)$ 
appearing in the infimum in \eqref{min_Fbl}.

\begin{definition}[\textbf{The classes $\Hricc$ and 
 $\Wspace$}]\label{def:the_class_W}
We set 
\begin{equation*}
\begin{aligned}
&\Hricc:=\{h\in L^\infty([0,\longR];[-1,1]): \;h{\rm ~convex~ and~ 
nonincreasing~ in~ }[0,\longR],\; h(0)=1\}, 
\\
& \Wspace:=\{(h,\psi) \in X_l:
~ h \in \Hricc\}.
\end{aligned}
\end{equation*}
\end{definition}

\begin{prop}[\textbf{Convexifying $h$}]\label{modifications_of_h}
We have
\begin{align}\label{get_rid_of_eps}
 \inf_{(h,\psi)\in \oldDom} 
\FBl(h,\psi)=\inf_{(h,\psi)\in \Wspace}\FBl(h,\psi).
\end{align}
\end{prop}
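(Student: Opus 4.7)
The inclusion $\Wspace\subset\oldDom$ yields $\inf_{\oldDom}\FBl\leq\inf_{\Wspace}\FBl$ immediately. For the reverse inequality, my plan is to show that every $(h,\psi)\in\oldDom$ with $\FBl(h,\psi)<+\infty$ can be modified (possibly via a limiting procedure) into a competitor in $\Wspace$ whose functional value does not exceed $\FBl(h,\psi)$. I would carry out the construction in four successive reductions.

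\emph{Tightening.} The only $h$-dependence of $\FBl$ is through $-\mathcal H^2(R_l\setminus SG'_h)=-2l+\int_0^l(1+h)\,dw_1$, and by assumption $\psi=0$ on $R_l\setminus SG'_h$. Replacing $h$ by the tight upper envelope $h_1(w_1):=\mathrm{ess\,sup}\{w_2:\psi(w_1,w_2)>0\}$ (set to $-1$ if the set is empty) gives $h_1\leq h$ a.e., $(h_1,\psi)\in\oldDom$, and $\FBl(h_1,\psi)\leq\FBl(h,\psi)$.
\emph{Monotonization.} Next, for a.e.\ $w_2$, let $\psi^*(\cdot,w_2)$ be the nonincreasing rearrangement of $\psi(\cdot,w_2)$ on $[0,l]$. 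By Steiner's perimeter inequality applied to the subgraph of $\psi$ viewed as a set of finite perimeter in $R_l\times\R$, one gets $\Aone(\psi^*,R_l)\leq\Aone(\psi,R_l)$; the tight profile $h_2$ associated to $\psi^*$ is nonincreasing in $w_1$ by construction, and Cavalieri preserves $\int_0^l(1+h_2)=\int_0^l(1+h_1)$. For the Dirichlet trace on $\{0\}\times[-1,1]$, after a preliminary truncation $\psi\leftarrow\psi\wedge\varphi$ one has $\psi^*(0,w_2)=\mathrm{ess\,sup}_{w_1}\psi(w_1,w_2)\leq\sqrt{1-w_2^2}=\varphi(0,w_2)$, and since also $\psi^*(0,\cdot)\geq\psi(0,\cdot)$ the penalty $|\psi^*-\varphi|$ does not increase.

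\emph{Convexification.} Substitute $h_2$ by its lower convex envelope $h_2^{**}$, which is again nonincreasing. To enforce $\psi=0$ on $R_l\setminus SG'_{h_2^{**}}$, introduce a mollified cutoff $\chi_\varepsilon$ supported in $SG'_{h_2^{**}}$ and consider $\psi_\varepsilon:=\psi^*\chi_\varepsilon$; as $\varepsilon\to 0$, the jump cost along $G_{h_2^{**}}$ is balanced against the $\mathcal H^2$ gain $\int_0^l(h_2-h_2^{**})\,dw_1$.
\emph{Left boundary.} Finally, if $h_2^{**}(0)<1$, redefine $h$ on $[0,\eta]$ as a linear segment from $(0,1)$ to $(\eta,h_2^{**}(\eta))$ and extend $\psi$ on the new strip by the natural extension of $\varphi$. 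The added $O(\eta)$ cost in $\int(1+h)$ is dwarfed by the recovered Dirichlet contribution $\int_{h_2^{**}(0)}^1\sqrt{1-s^2}\,ds$ on $\{0\}\times[h_2^{**}(0),1]$.

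\emph{Main obstacle.} The hardest step is the convexification: cutting $\psi^*$ along $G_{h_2^{**}}$ introduces a singular part of $D\psi$ whose graph-area cost must be quantitatively dominated by the $\mathcal H^2$ savings obtained from the reduction $SG'_{h_2}\to SG'_{h_2^{**}}$. Closing this estimate will require a careful slicing and mollification argument exploiting the convexity of $h_2^{**}$ together with Remark~\ref{leb_point} (vanishing of the trace of $\psi$ immediately above $h$). A secondary technical difficulty is justifying that the preliminary truncation $\psi\wedge\varphi$ in the monotonization step does not increase $\Aone$, which likely needs an approximation argument and the specific structure of $\varphi$ depending only on $w_2$.
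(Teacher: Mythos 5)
Your architecture (tighten, rearrange, convexify, fix the left edge) is genuinely different from the paper's, which never rearranges $\psi$ globally: the paper only ever \emph{sets $\psi$ to zero} above chords of the graph of $h$, proving for each such chord-cut that $\FBl$ does not increase, and then obtains convexity and monotonicity by iterating these local modifications (the convex envelope via chords through pairs of regular points, monotonicity via truncation $h\wedge \overline h(t_0)$ to the right of a regular point). That design choice is what makes the paper's proof close: since $\psi$ is only modified on the region being vacated, all boundary terms except the new jump along the cut are untouched. Your rearrangement step, by contrast, perturbs every term of the functional at once and forces you into the truncation $\psi\wedge\varphi$, which, as you yourself note, need not decrease $\Aone$ (take $\psi\equiv 1$: truncation replaces a flat graph by the graph of $\varphi$, whose area is strictly larger); whether the boundary gains compensate is exactly the kind of global bookkeeping the paper's local surgery avoids.

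The genuine gap is in your convexification step, which is where the entire content of the proposition lives and which you leave open. Worse, the balance you announce is the wrong one: the $\mathcal H^2$ gain $\int_0^l(h_2-h_2^{**})\,dw_1$ coming from the term $-\mathcal H^2(R_l\setminus SG'_h)$ is \emph{exactly cancelled} by the area of the new flat graph $\psi_\varepsilon=0$ over the vacated region $U$, so it contributes nothing toward paying the jump $\int_{G_{h_2^{**}}}|\psi^{*,-}|\,d\Hone$ created by the cut. In the paper that jump is paid by a different resource: a slicing argument (vertical, or perpendicular to the chord $L_{12}$) shows that the area of the \emph{discarded} graph of $\psi$ over $U\cup V$ exceeds $\mathcal H^2(V)$ by at least the total vertical variation of $\psi$ there, which — using that $\psi$ already vanishes above $G_h$ and that its trace on the lateral segments $\{t_i\}\times(h(t_i),1)$ vanishes (Remark \ref{leb_point}) — dominates the trace of $\psi$ on the cut line via the triangle inequality (this is inequality \eqref{eq:conclusion2} in the paper). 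Without that estimate your step fails; the mollified cutoff $\chi_\eps$ changes nothing, since in the limit it reproduces the sharp cut and the same jump. Two smaller remarks: ``Cavalieri preserves $\int(1+h_2)=\int(1+h_1)$'' is not literally true (the positivity set of $\psi$ need not coincide with $SG'_{h_1}$), though the inequality $\le$ that you actually need does hold; and your last step is addressing a non-issue, since $h(0)$ can simply be redefined to equal $1$ (convex functions may jump up at an endpoint) without altering any term of $\FBl$, whereas your proposed wedge insertion creates a jump of $\psi$ along an $O(1)$-length segment that essentially cancels the Dirichlet gain you claim.
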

\begin{proof}
It is enough to show
the inequality ``$\geq$''.
By extending $\psi$ outside the open rectangle $ R_l$ as
$\psi:=0$ in $((0,\longR)\times \R)\setminus R_l$,  
we see that 
\begin{equation}\label{eq:by_extending}
\FBl(h,\psi)=
\areaonecod
\left(\psi, \overline R_l\setminus \big(\{w_1=0\}\cup\{w_1=l\}\big)\right)
-\mathcal H^2(R_l\setminus SG'_h)
+
\int_{\{0\}\times[-1,1]}|\psi^--\varphi|~d\mathcal H^1,
\end{equation}
where (see \eqref{eq:relaxed_area_one_cod})
$$\areaonecod\left(\zeta, \overline R_l\setminus \big(\{w_1=0\}\cup\{w_1=l\}\big)\right)
=\areaonecod
(\zeta,  {R}_l)+\int_{(0,\longR)\times \{1,-1\}}|\zeta^-|~d\mathcal H^1,$$
$\zeta^-$ being the trace of $\zeta \in BV(R_l)$ on $(0,\longR) \times\{1,-1\}$.

The thesis of the proposition will follow from the next three observations:
\begin{itemize}
 \item[(1)] If $h\in \Hricc$ 
is such that  $\overline h(t_0)=-1$ for some regular  point $t_0\in(0,\longR)$, 
then the subgraph $SG_h$ 
of $h$
splits in two mutually disjoint components: 
$(SG'_h)^-=SG'_h\cap \{w_1<t_0\}$ 
and $(SG'_h)^+=SG'_h\cap \{w_1>t_0\}$. 
Let  $\psi\in BV(R_l,[0,1])$ be such that 
\begin{align*}
 \psi=0 \qquad \text{~a.e.~in~ } R_l \setminus
SG'_h.
\end{align*}
The trace of $\psi$ over the segment $\{w_1=t_0,\;\overline h(t_0)\leq w_2\leq 1\}$ is $0$, as  a consequence of Remark \ref{leb_point}.
Then the function $\modpsi: R_l \to [0,1]$ defined as
\begin{align*}
 \modpsi(w_1,w_2):=\begin{cases}
                            \psi(w_1,w_2)&\text{if }w_1<t_0,\\
                            0&\text{otherwise,}
                           \end{cases}
\end{align*}
 still satisfies $(h,\modpsi)\in 
\oldDom$, and
\begin{align*}
\FBl(h,\modpsi)\leq \FBl(h,\psi).
\end{align*}
Being $\modpsi$ identically zero in $\{w_1> t_0\}$, in particular 
in $SG'_h\cap\{w_1>t_0\}$, we can introduce
\begin{align*}
 \modh(w_1):=\begin{cases}
                             h(w_1)&\text{if }w_1<t_0,\\
                            -1&\text{otherwise,}                       
                      \end{cases}
\end{align*}
so that $(\modh,\modpsi)\in \oldDom$
and we easily see that
$\FBl(\modh,\modpsi) 
 \leq \FBl(h,\modpsi)$;  
hence
\begin{equation*}
 \FBl(\modh,\modpsi) \leq \FBl(h,\psi).
\end{equation*}

\item[(2)] More generally, let $( h,\psi)\in
\oldDom$ and let $t_0 \in (0,\longR)$ 
be any regular point of $h$; we can also suppose that $\overline h(t_0)<1$. 
Consider 
\begin{align}\label{modif_omega1}
 \modh(w_1):=\begin{cases}
                             h(w_1)&\text{if }w_1<t_0,\\
                             h(w_1)\wedge \overline h(t_0)&\text{otherwise,}                       
                      \end{cases}
\end{align}
\begin{align*}
 \modpsi(w_1,w_2):=\begin{cases}
                            \psi(w_1,w_2)&\text{if }w_1<t_0,\\
                            \psi(w_1,w_2)&\text{if }w_1\geq t_0,\;w_2\leq \overline h(t_0),\\
                            0&\text{otherwise. }
                           \end{cases}
\end{align*}
We
claim that $\FBl(\modh,\modpsi) \leq \FBl( h,\psi).$
Define \begin{equation*}
  U:=\{(w_1,w_2) \in (0,\longR)\times(-1,1): w_1>t_0,\; \overline h(t_0)<w_2< h(w_1)\},
\end{equation*}
that is the set where we have replaced $\psi$ by $0$. 
To prove the claim, using \eqref{eq:by_extending} and
the equalities
$$
\int_{\{0\}\times[-1,1]}|\psi^--\varphi|~d\mathcal H^1 = 
\int_{\{0\}\times[-1,1]}|{\modpsi}^--\varphi|~d\mathcal H^1,
$$
$$
\mathcal H^2
(R_l \setminus SG'_{\modh}) = 
\mathcal H^2
(U \cup (R_l \setminus SG'_{h}))=
\mathcal H^2
(U) +
\mathcal H^2(R_l \setminus SG'_{h}),
$$
 we have to show that
\begin{align}\label{conclusion1}
\areaonecod
\left(\modpsi,
\overline R_l\setminus \big(\{w_1=0\}\cup\{w_1=l\}\big)
\right)\leq \areaonecod
\left(\psi,
\overline R_l\setminus \big(\{w_1=0\}\cup\{w_1=l\}\big)\right)
+\mathcal H^2( U).
\end{align}
Assume that $U$ is non-empty and that $\mathcal H^2(U)>0$.
It is convenient to introduce
$$V:=\{(w_1,w_2) \in R_l:t_0 < w_1<l,\; h(w_1)\vee \overline h(t_0)\leq w_2< 1\},$$
so that 
$U\cup V=\{(w_1,w_2): w_1>t_0,\; \overline h(t_0)<w_2< 1\}$ is an open rectangle. 
Since we have modified $\psi$ only in $U$, inequality 
\eqref{conclusion1} is equivalent to 
\begin{equation}
\label{conclusion1ter}
\begin{aligned}
& 
\areaonecod
(\modpsi,{U\cup V})+\int_{(t_0,l)\times\{h(t_0)\}}|{\modpsi}^+-{\modpsi}^-|d\mathcal H^1+\int_{(t_0,l)\times\{1\}}|{\modpsi}^-|d\mathcal H^1
\\
\leq
& 
\areaonecod
(\psi,{U\cup V})+\int_{(t_0,l)\times\{h(t_0)\}}|\psi^+-\psi^-|d\mathcal H^1+\int_{(t_0,l)\times\{1\}}|\psi^-|d\mathcal H^1
+\mathcal H^2( U),
\end{aligned}
\end{equation}
with 
$\psi^\pm$ (resp.
${\modpsi}^\pm$)
the external and internal traces of $\psi$ 
(resp. $\modpsi$) on $\partial (U\cup V)$;
here we have used from Remark \ref{leb_point}
that the trace of $\psi$ on $\{t_0\}\times (h(t_0),1)$ 
is zero (hence $
\int_{\{t_0\} \times (\overline h(t_0),1)} \vert \psi^+ - \psi^-\vert 
d\mathcal H^1=
\int_{\{t_0\}\times (\overline h(t_0),1)} 
\vert {\modpsi}^+ - {\modpsi}^-\vert d\mathcal H^1=
0$)
 and that the external traces $\psi^+$, ${\modpsi}^+$ 
on $(t_0,l)\times\{1\}$ vanish as well. 
Hence, exploiting that $\modpsi=0$ on $U\cup V$, so that 
$\areaonecod(\modpsi, U \cup V) = 
\mathcal H^2(U)+ 
\mathcal H^2(V)$, and that $\modpsi = \psi$ on $R_l \setminus (U\cup V)$,
inequality \eqref{conclusion1ter} is equivalent to
\begin{equation}\label{eq:conclusion2}
\begin{aligned}
&\mathcal H^2(V)+\int_{(t_0,l)\times\{\overline h(t_0)\}}|\psi^+|d\mathcal H^1
\\
\leq &\areaonecod
(\psi,{U\cup V})+\int_{(t_0,l)\times\{\overline h(t_0)\}}|\psi^+-\psi^-|d\mathcal H^1+\int_{(t_0,l)\times\{1\}}|\psi^-|d\mathcal H^1.
\end{aligned}
\end{equation}
We split
$$(t_0,l)=H_1\cup H_2\cup H_3,$$
with $H_1:=\{w_1\in (t_0,l):h(w_1)=1\}$, 
$H_2:=\{w_1\in (t_0,l):\overline h(t_0)\leq h(w_1)<1\}$, and 
$H_3:=\{w_1\in (t_0,l):h(w_1)< \overline h(t_0)\}$.
Since $\overline{\mathcal A}(\psi;U\cup V)=\mathcal H^2(\mathcal G_\psi\cap ((U\cup V)\times\R))$, by slicing and looking at $\mathcal G_\psi$ as an 
integral current 
\cite{Krantz_Parks:08}, \cite{GiMoSu:98}, \cite{Federer:69}, 
we have\footnote{Here we use that $D_{w_2}\psi=0$ in $V$.}
\begin{align*}
 \areaonecod
(\psi, U\cup V)&\geq \int_{(t_0,l)}
\mathcal H^1\Big((\mathcal G_{\psi})_{t} {\cap 
((t_0,l)\times (\overline h(t_0),1)\times \R})\Big)~dt
\\
 &\geq
\int_{(t_0,l)}\int_{(\overline h(t_0),1)}|D_{w_2}\psi(t,s)|~\;dt
+\mathcal H^2(V) 
\\
 &=
\int_{H_1\cup H_2}\int_{(\overline h(t_0),1)}|D_{w_2}\psi(t,s)|~\;dt
+\mathcal H^2(V)
\\
 &\geq
 \int_{H_2}|\psi^-(t,\overline h(t_0))|~dt+
\int_{H_1}|\psi^-(t,\overline h(t_0))-\psi^-(t,1)|~d t
+\mathcal H^2(V)
\\
&\geq\int_{H_1\cup H_2}|\psi^-(t,\overline h(t_0))|~d t 
-\int_{H_1}|\psi^-(t,1)|~d t+\mathcal H^2(V)
\\
 &=\int_{(t_0,l)}|\psi^-(t,\overline h(t_0))|~d t 
-\int_{H_1}|\psi^-(t,1)|~d t+\mathcal H^2(V)
\\
 &=\int_{(t_0,l)}|\psi^-(t,\overline h(t_0))|~dt 
-\int_{(t_0,l)}|\psi^-(t,1)|~dt+\mathcal H^2(V),
\end{align*}
where $(\mathcal  G_{\psi})_{t}$ is the slice of $\mathcal G_\psi$ 
on the plane $\{w_1=t\}$, 
that is the generalized graph of the function $\psi\res\{w_2=t\}$. 
From the above expression, the 
triangular inequality 
implies \eqref{eq:conclusion2}.

\item[(3)] Let $(h,\psi)\in
\oldDom$. Let $t_1,t_2\in (\eps,l)$ 
be regular points for $h$ with  $t_1<t_2$, 
and let $r_{12}(t):=\overline h(t_1)+\frac{\overline h(t_2)-\overline h(t_1)}{t_2-t_1}(t-t_1)$. We consider the following modifications of $h$ and $\psi$:
\begin{align*}
 \modhbis(w_1):=\begin{cases}
                            h(w_1)&\text{if }0<w_1<t_1\text{ or }l>w_1>t_2,\\
                            h(w_1)\wedge r_{12}(w_1)&\text{otherwise,}                       
                      \end{cases}
\end{align*}
and 
\begin{align*}
 \modpsibis(w_1,w_2):=\begin{cases}
                            \psi(w_1,w_2)&\text{if }0<w_1<t_1
\text{ or }l>w_1>t_2,\\
                            \psi(w_1,w_2)&\text{if }w_1\in[t_1,t_2] ~\textrm{and} ~w_2\leq r_{12}(w_1),\\
                            0&\text{otherwise. }
                           \end{cases}
\end{align*}
In other words we set $\psi$ equal to $0$ above the segment $L_{12}$ connecting $(t_1,\overline h(t_1))$ to $(t_2,\overline h(t_2))$.
Also in this case we have
\begin{align}\label{ineq_conv}
 \FBl(\modhbis,\modpsibis) \leq \FBl(h,\psi).
\end{align}
Indeed, if $\overline h(t_1)=\overline h(t_2)$ the proof is 
identical to the case (2). Otherwise, it can be obtained by slicing as well, 
parametrizing $L_{12}$ by an arc length parameter, 
then slicing the region $\{(w_1,w_2):w_1\in  (t_1,t_2),\;w_2\in (\ell_{12}(w_1),1)\} $\footnote{$\ell_{12}$ represents the affine function whose graph is $L_{12}$.} by lines perpendicular to $L_{12}$, and 
exploiting the fact that $\psi$ equals zero on the segments $\{t_i\}\times (h(t_i),1)$.
\end{itemize}
 
Let  $(h,\psi)\in \oldDom$ be given; 
from (3) we
can always replace $h$ by its convex envelope 
and modifying accordingly 
$\psi$, we get two functions $\modhbis$ and $\modpsibis$ 
such that \eqref{ineq_conv} holds. Moreover, by (2), 
if $t_0\in(0,\longR)$ is a regular point for $h^\#$, we 
can always replace $h^\#$ by $\modh$ in \eqref{modif_omega1}, 
so that $\modh$ turns out to be nonincreasing. 
The assertion of the proposition follows.
\end{proof}

Let us rewrite the functional $\FBl$ in a convenient way. 
Let  $(h,\psi)\in \Wspace$, and let $G_h=
\{(w_1,h(w_1)):w_1\in (0,\longR)\}\subset \overline{R}_l$ 
be the graph of $h$. We have, using \eqref{eq:F},
\begin{equation}\label{eq:F_ter}
  \FBl(h,\psi)
=
  \areaonecod
(\psi, SG_h)+\int_{\graphh \setminus \{h=-1\}}|\psi|~d\mathcal H^1+\int_{\partial_DR_l}|\psi-\varphi|~d\mathcal H^1,
\end{equation}
where, in the integral over $\graphh$, 
we consider the trace of $\psi \res SG_h$ on $\graphh$. 

\subsection{Doubling}\label{subsec:doubling}
Now we analyse the minimum problem on the
right-hand side of \eqref{get_rid_of_eps}. To this aim,
as explained in the introduction,  it is convenient to write the analogue of $\FBl$ 
in a doubled rectangle, see \eqref{eq:F_2l}.
 
Remember  that $\doubledrectangle$ 
denotes the open doubled 
rectangle, $\doubledrectangle
:=(0,2l)\times (-1,1)$; we define its Dirichlet boundary
\footnote{It is worth noticing once more 
that $\partial_D \doubledrectangle$ consists of three edges of 
$\partial \doubledrectangle$,
while $\partial_D R_l$ (see \eqref{eq:Dirichlet_part_of_boundary}) 
consists of two edges of $\partial R_l$.} $\partial_D 
\doubledrectangle\subset \partial \doubledrectangle$ as
\begin{align*}
 \partial_D \doubledrectangle:=\big(\{0,2l\}\times [-1,1]\big)\cup \big((0,2l)\times\{-1\}\big),
\end{align*}
so that 
$\partial \doubledrectangle \setminus \partialbar\doubledrectangle = (0,2l)\times \{1\}$.	

We recall that $\Hspace$ has been defined in 
\eqref{eq:space_of_h_in_the_doubled_interval}, and that
for each $h\in \Hspace$,
 $$
\graphh :=\{(\axialcoordofcylinder, h(\axialcoordofcylinder)) 
: \axialcoordofcylinder \in (0,2\longR)\}, \qquad
\subgraphh 
:= \{(\axialcoordofcylinder,\scoord)\in \doubledrectangle: \scoord<\h(\axialcoordofcylinder)\},
$$
where $\subgraphh:= \emptyset$ 
in the case $h\equiv-1$. Notice that for $h\in \Hspace$, $SG_h$ is an open set.
We set
\begin{equation}\label{eq:Lh}
\Lh:=
\Big(\{0\} \times (h(0),1)\Big) \cup 
\Big(\{2l\} \times (h(2l),1)\Big),
\end{equation}
which is either empty,
or the union of two equal intervals,
see Fig. \ref{fig:graphofh}.

\begin{figure}
\centering
\includegraphics[scale=0.5]{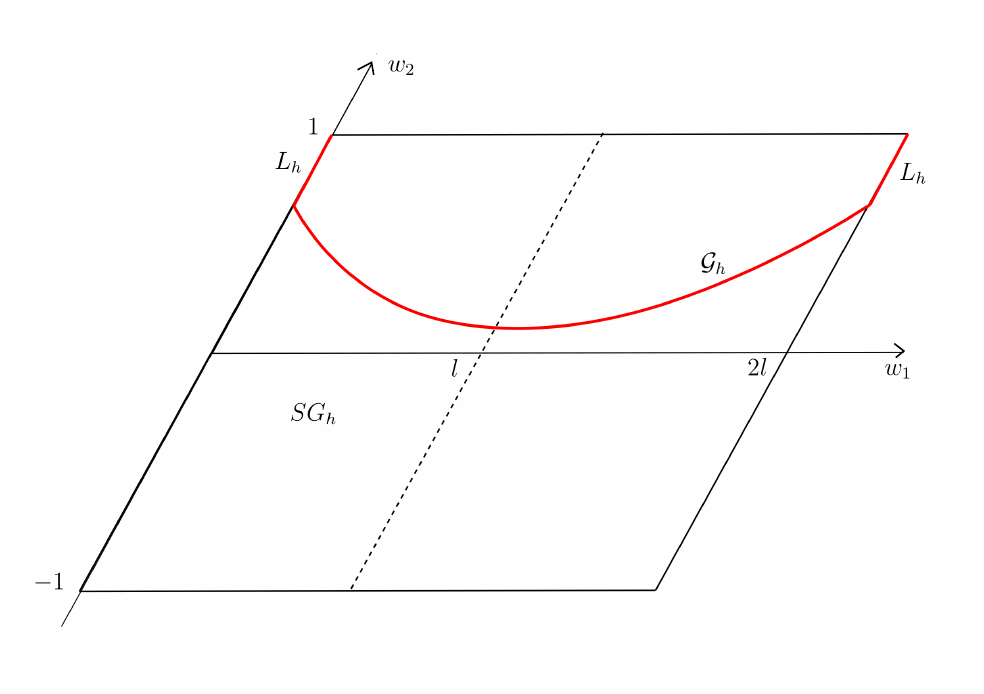}
\caption{the graph of a convex symmetric 
function $\h \in \Hspace$;
$\Lh$, defined in \eqref{eq:Lh},  
consists of  the two vertical segments  
over the boundary of $(0,2\longR)$, from $h(0) = h(2\longR)$ to $1$.}
\label{fig:graphofh}
\end{figure}

Clearly, the restriction of $\varphi$, defined in \eqref{eq:varphi},
 on $\partial_D \doubledrectangle$ reads as:
\begin{equation}\label{eq:varphi_doubled}
\dirdatum(\axialcoordofcylinder,\scoord):=
\begin{cases}
\sqrt{1-\scoord^2} & \text{ if }
(\axialcoordofcylinder,\scoord)
 \in \{0,2\longR\} \times [-1,1],
\\
0 & \text{ if }(\axialcoordofcylinder,\scoord) 
\in (0,2\longR) \times \{-1\}.
\end{cases}
\end{equation}
The graph of $\dirdatum$ on $\{0,2\longR\} \times [-1,1]$ 
consists of two half-circles of radius $1$ 
centered at $(0,0)$ and $(2 \longR,0)$ respectively, 
see Fig. \ref{fig:graphofphi}. 

\begin{figure}
\centering
\includegraphics[scale=0.50]{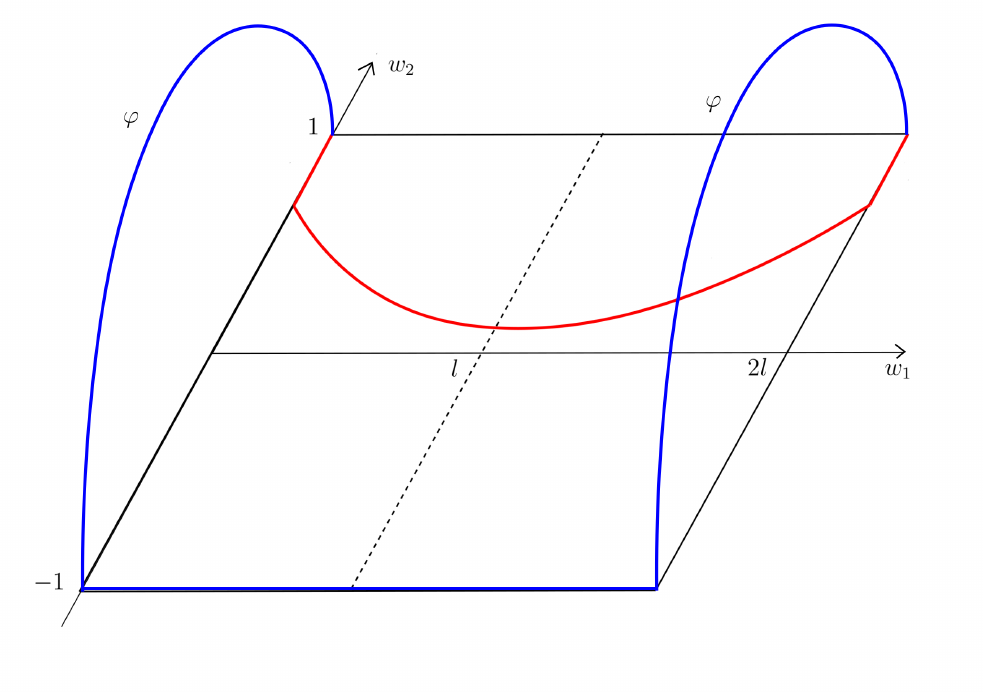}
\caption{the graph of the boundary condition function $\dirdatum$
in \eqref{eq:varphi_doubled} on the Dirichlet
boundary of $R_{2\longR}$. We also draw the graph of a function 
$h \in \mathcal H_{2\longR}$, and the
two segments $L_h$.
}
\label{fig:graphofphi}
\end{figure}

We further recall that
$X_{2\longR}^{\rm conv}$ has been defined in \eqref{eq:X_2l_conv} and that,
and for any $(h,\psi) \in 
X_{2\longR}^{\rm conv}$, 
$\FB(\h,\psione)$ has been defined in \eqref{eq:F_2l}.
\begin{Remark}\rm \label{rmk:FAremarks}
\begin{itemize}
\item[(i)] The only case in which
the last addendum on the right-hand side of \eqref{eq:F_2l} may be positive
is when $h$ is identically $1$ on $\partial \doubledrectangle \setminus 
\partial_D \doubledrectangle$;
 \item[(ii)] 
We claim that 
\begin{equation}
\FB(\h,\psione)=\Aone(\psione,\Omegah)
+
\int_{\partialbar \Omegah}  \vert \psione  -\dirdatum\vert ~d \Hone 
+ 
\int_{\graphh \setminus\{w_2=-1\}}\vert \psione^- \vert ~d \Hone 
+
\int_{\Lh} \dirdatum ~d \Hone, 
\label{eq:FA}
\end{equation}
where
\begin{equation}\label{eq:partial_D_G_h}
\partial_D \Omegah:= (\partial_D \doubledrectangle) \cap \partial \Omegah,
\end{equation}
and $\psi^-$ denotes the trace of $\psi$ from the side of $SG_h$. 

To show \eqref{eq:FA}, we start to observe that,  
 using that $\psione=0$ on $\doubledrectangle \setminus \Omegah$, it follows 
$\int_{\partial \doubledrectangle \setminus 
\partial_D \doubledrectangle} \vert \psi\vert~d \mathcal H^1 = 
\int_{G_h\cap\{w_2=1\}} \vert \psi\vert~d \mathcal H^1$. This last term is nonzero only if $h\equiv 1$, in which case $L_h$ is empty, and the equivalence between \eqref{eq:F_2l} and \eqref{eq:FA} easily follows. If instead $h$ is not identically $1$, then, using again that $\psione=0$ 
on $\doubledrectangle \setminus \Omegah$, we see that the last term on 
the right-hand side of \eqref{eq:F_2l} is null,  
and from \eqref{eq:relaxed_area_one_cod},
\begin{equation}\label{eq:AA}
\Aone(\psione,\Omegah)=
\Aone(\psione,\doubledrectangle)-\Htwo(\doubledrectangle \setminus \Omegah) -
\int_{\graphh \cap \doubledrectangle} \vert\psione^- \vert d \Hone;
\end{equation}
hence,
inserting
\eqref{eq:AA} into \eqref{eq:F_2l}, 
we obtain, splitting $\partial_D \doubledrectangle
  = (\partial_D \Omegah) \cup L_h \cup (G_h\cap \{w_2=-1\})$, and using
that $\dirdatum =0$ on $(0,2\longR)\times\{-1\}$,
\begin{equation*}
\begin{aligned}
\FB(\h,\psione)
=&
\Aone(\psione,\doubledrectangle)-\Htwo(\doubledrectangle \setminus \Omegah) 
+\int_{\partialbar \doubledrectangle}  \vert \psione -\dirdatum\vert d \Hone 
\\
=&\Aone(\psione,\Omegah)
+\int_{\partialbar \doubledrectangle}  \vert \psione -\dirdatum\vert d \Hone
+\int_{\graphh \cap \doubledrectangle} \vert\psione^- \vert d \Hone
\\
=&
\Aone(\psione,\Omegah)+\int_{\partialbar \Omegah}  
\vert \psione -\dirdatum\vert d \Hone + 
\int_{\Lh} \vert  \dirdatum\vert d \Hone 
+ \int_{G_h\cap\{w_2=-1\}} \vert \dirdatum\vert d \Hone
\\
& 
+\int_{\graphh \cap \doubledrectangle} \vert\psione^- \vert d \Hone
\\
=& \Aone(\psione,\Omegah)+
\int_{\partialbar \Omegah}  \vert \psione  -\dirdatum\vert d \Hone 
+\int_{\graphh\setminus \{w_2=-1\}}\vert 
\psione^-\vert d\Hone + \int_{\Lh} \dirdatum d \Hone.
\end{aligned}
\end{equation*}
\item[(iii)] We have 
 \begin{equation}  \label{eq:equiv_of_inf_on_good_set}
 \begin{aligned}
& \inf_{(h,\psi)\in
 X_{2\longR}^{\rm conv}} \FB(\h,\psione)
\\
=&
\inf \big \{ \Aone(\psione,\Omegah)
+
\int_{\partialbar \Omegah}  \vert \psione  -\dirdatum\vert ~d \Hone 
+ 
\int_{\graphh \setminus \{w_2=-1\}}\vert \psione^-\vert ~d \Hone 
+
\int_{\Lh} \dirdatum ~d \Hone
\\
 & \quad \quad :\h \in \Hspace \setminus \{\h\equiv -1\},~\psione \in  
\BV (\Omegah,[0,1]) \big \} .
\end{aligned} 
\end{equation}
 \item[(iv)] If $h>-1$ everywhere, then  $\Omegah$ is connected,
$\partial_D \Omegah = \partial_D \doubledrectangle \setminus \Lh$,
 and the sum of the first three terms on 
the right-hand side of \eqref{eq:FA} gives the area 
of the graph of $\psi$ 
on $\overline{\Omegah}$, with the boundary condition $\dirdatum$ set to be $0$ on $\graphh$. 
 
\item[(v)]  Our aim is to have a surface in 
$\overline \doubledrectangle \times \R \subset \R^3=
\R^2_{(\axialcoordofcylinder,\scoord)}\times \R$ of graph type,  whose boundary consists of the union of the graph of $\dirdatum$ and 
the graph of a convex function $\h \in \Hspace$. The last three terms in \eqref{eq:FA} are an area penalization to force the solution to attain these boundary conditions by filling, with vertical walls, the gap between the boundary of any 
competitor surface (the generalized graph of $\psione$) and the required boundary conditions. In particular the presence of the last term of \eqref{eq:FA} is explained as follows: 
when $\h(0)<1$, \textit{i.e.}, $\Lh \neq \emptyset$, the graph of 
any $\psione \in \BV(\Omegah,[0,1])$ does not
reach the graph of $\dirdatum \vert_{\Lh}$ (simply because $L_h
\cap \overline \Omegah=\emptyset$). To overcome this, 
the graph of $\psione$ is glued to the wall
consisting of the subgraph of 
$\dirdatum \vert_{\Lh}$ (inside $\overline R_{2\longR}$).

\item[(vi)] Take $\hn:=-1+\frac{1}{n}$, and $\psionen:=c>0$ on $\Omegahn$,  
then $\displaystyle
\lim_{n\to +\infty} \areaonecod(\psi_n, \Omegahn)=0$, 
$\displaystyle
\lim_{n\to +\infty} \int_{\partialbar \Omegahn}  
\vert \psione_n  -\dirdatum\vert ~d \Hone 
=2cl$, and 
$\displaystyle
\lim_{n\rightarrow+\infty} \int_{G_{h_n} \setminus \{h_n=-1\}}\vert \psione \vert ~d \Hone =2cl$,
$\displaystyle
\lim_{n\to +\infty} \int_{L_{h_n}} \dirdatum~  d\Hone=\pi$,
hence
$$
\FB(-1,0)=\pi < \lim_{n\to +\infty} \FB(h_n,\psi_n) = 4c\longR +\pi,
$$
that is the functional $\FB$ in some sense forces a minimizing  sequence to attain the boundary conditions as much as possible. 
  \end{itemize}
 \end{Remark}

By symmetry, we easily infer
\begin{align}\label{doubling}
 2\inf_{(h,\psi)\in \Wspace}
\FBl(h,\psi)=\inf_{(h,\psi)\in
 X_{2\longR}^{\rm conv}}
\FB(h,\psi),
\end{align}
therefore, by \eqref{get_rid_of_eps}, 
\begin{align}\label{equivalenza}
	\inf_{(h,\psi)\in \oldDom} 
	\FBl(h,\psi)=\frac12\inf_{(h,\psi)\in
		X_{2\longR}^{\rm conv}}
	\FB(h,\psi).
\end{align}
Next we analyse the latter minimization problem.
\begin{Remark}[\textbf{Two explicit estimates from above}]
\label{rem:two_explicit_estimates_form_above}\rm 
Let $\h\equiv 1$ and 
$\psione(\axialcoordofcylinder,\scoord):=\sqrt{1-\scoord^2} = \varphi(w_1,w_2)$, for any $(w_1,w_2) \in R_{2\longR}$. 
Then $(\h, \psione)$ is one of the competitors  in \eqref{doubling} and 
therefore 
$$
\inf_{(h,\psi)\in
 X_{2\longR}^{\rm conv}} \FB(h,\psi)\leq 
\FB(1,\psione)=2 \pi \longR \qquad \forall \longR >0,
$$
which is the lateral area of the cylinder $(0,2\longR) \times \unitdisc$.
Also, $\FB$ is well-defined for $h\equiv -1$, 
in which case $\Omegah = \emptyset$, $\psi \equiv 0$ in $\doubledrectangle$,
and therefore
\begin{equation}\label{eq:FB_minus_one_zero} 
\FB(-1, 0)=\int_{\{0,2\longR\}\times(-1,1)}\dirdatum~ d\Hone =\pi,
\end{equation}
which is the area of  the two 
half-discs joined by the segment $(0,2 \longR) \times \{-1\} $,
see Fig. \ref{fig:graphofphi}.  In particular
\begin{equation} \label{eq:upperboundofA}
\inf_{(h,\psi)\in
 X_{2\longR}^{\rm conv}}
\mathcal{F}(h,\psi)\leq \pi \qquad 
\forall
 \longR >0.
\end{equation}
\end{Remark}

In the next section we shall 
prove the existence and regularity of minimizers for the minimum
problem on the right-hand side of \eqref{equivalenza}.

\section{Proof of Theorems \ref{teo:main1} and \ref{teo:main2}}\label{sec:proof_of_theorems}
This section is devoted to the proof of our main results,
Theorems \ref{teo:main1} and \ref{teo:main2},
and it is splitted into various subsections for clarity of the 
presentation.

\subsection{Existence of a minimizer of $\FB$ in $X_{2\longR}^{\rm conv}$}\label{subsec:existence_of_a_minimizer}
We start by analysing the features of the space 
$\Hspace$ defined
in \eqref{eq:X_2l_conv}.
Clearly the graph of $h \in \Hspace$ is symmetric with 
respect to $\{\axialcoordofcylinder=\longR\}$; also, the convexity of $\h$ implies $\h
\in {\rm Lip}_{{\rm loc}}((0,2\longR))$, 
and $\h$ has a  
continuous extension on $[0, 2\longR]$.

\begin{definition}[\textbf{Convergence in $X_{2\longR}^{\rm conv}$}]
\label{def:convergence_in_Dom_FB}
We say that a sequence $((\hn, \psionen))\subset 
 X_{2\longR}^{\rm conv}$  
converges to $(\h,\psione)\in  X_{2\longR}^{\rm conv}$, if 
\begin{itemize}
\item[-] $(\hn)$ converges 
to $\h$
uniformly on compact subsets of $(0,2\longR)$; 
\item[-] $(\psionen)$ converges 
to $\psione$ in $\Lone(\doubledrectangle)$.
\end{itemize}
\end{definition}

\begin{lemma}[\textbf{Compactness of $\Hspace$}]\label{lem:compactnessofH}
Every sequence $(\h_k) \subset \Hspace$ has a subsequence 
converging uniformly on compact subsets of $(0,2\longR)$ 
to some element of $\Hspace$.
\end{lemma}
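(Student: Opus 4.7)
The plan is to combine a local uniform Lipschitz bound with an Arzelà--Ascoli diagonal argument, and then check that the defining properties of $\Hspace$ pass to the limit. The proof is essentially routine, and I do not expect any serious obstacle; the only mild subtlety is the extension of the limit to the closed interval $[0,2\longR]$.

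First, I would establish a local equi-Lipschitz bound. Fix any compact interval $[a,b] \subset (0,2\longR)$ and choose $\delta>0$ with $[a-\delta,b+\delta]\subset (0,2\longR)$. For every convex $h\in\Hspace$ and every $x<y$ in $[a,b]$, convexity yields
\begin{equation*}
\frac{h(a)-h(a-\delta)}{\delta}\ \leq\ \frac{h(y)-h(x)}{y-x}\ \leq\ \frac{h(b+\delta)-h(b)}{\delta},
\end{equation*}
and since $|h|\leq 1$ both extremal slopes are bounded in absolute value by $2/\delta$. Thus the family $\Hspace$ is uniformly bounded by $1$ and uniformly $(2/\delta)$-Lipschitz on $[a,b]$.

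Next, I would apply Arzelà--Ascoli on a sequence of compact intervals $[a_n,b_n]\subset(0,2\longR)$ with $a_n\downarrow 0$ and $b_n\uparrow 2\longR$, and a standard diagonal extraction, to obtain a subsequence $(h_{k_j})$ converging uniformly on every compact subset of $(0,2\longR)$ to some continuous $h:(0,2\longR)\to[-1,1]$. Uniform convergence preserves both convexity and the symmetry identity $h_{k_j}(\axialcoordofcylinder)=h_{k_j}(2\longR-\axialcoordofcylinder)$, so $h$ is convex and symmetric on $(0,2\longR)$ with values in $[-1,1]$.

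Finally, I would extend $h$ to the closed interval. Being convex and bounded on $(0,2\longR)$, $h$ admits one-sided limits at the endpoints which lie in $[-1,1]$; define $h(0)$ and $h(2\longR)$ to be these limits. By construction the extension is convex on $[0,2\longR]$ (the defining convex combination inequality at an endpoint follows by passing to the limit from the interior), symmetric (using the two one-sided limits coincide by the symmetry of $h$ on the open interval), and takes values in $[-1,1]$. Hence $h\in\Hspace$, concluding the proof.
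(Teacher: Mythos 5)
Your proof is correct: the local equi-Lipschitz bound from convexity and the uniform bound $|h|\leq 1$, Arzel\`a--Ascoli with diagonal extraction, stability of convexity/symmetry/bounds under pointwise limits, and the extension to $[0,2\longR]$ via one-sided limits are all sound. The paper does not write out an argument at all --- it simply cites H\"ormander, Sec.~1.1, for this standard compactness property of bounded convex functions --- and what you have written is precisely the standard proof of that cited fact.
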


\begin{proof} See for instance \cite[Sec. 1.1]{Hormander:94}.
\end{proof}

\begin{lemma}[\textbf{Closedness of $X_{2\longR}^{\rm conv}$}]\label{lem:closednessinB}
Let $((\hn,\psionen))\subset X_{2\longR}^{\rm conv}$ be a sequence such that 
$(\hn)$ converges 
to $\h \in \Hspace$
uniformly on compact subsets of $(0,2\longR)$,
 and $(\psionen)$ converges 
to $\psione \in \BV(\doubledrectangle)$
in $\Lone(\doubledrectangle)$.
Then $(\h, \psione)\in X_{2\longR}^{\rm conv}$.
\end{lemma}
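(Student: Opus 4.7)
The plan is to verify the two defining conditions of $X_{2\longR}^{\rm conv}$ that are not already contained in the hypotheses, namely that $0\leq \psione \leq 1$ a.e.\ in $\doubledrectangle$ and that $\psione=0$ a.e.\ on $\doubledrectangle \setminus SG_\h$ (the membership $\h \in \Hspace$ and the $\BV$ regularity of $\psione$ on $\doubledrectangle$ are given by assumption). First I would extract a subsequence, not relabeled, along which $\psionen\to\psione$ pointwise a.e., which is possible from the $\Lone$-convergence; the inequality $0\leq\psionen\leq 1$ then transfers to $\psione$ at every pointwise convergence point, and so $\psione \in [0,1]$ a.e.

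For the subgraph condition, observe first that the graph $\graphh$ is Lebesgue-negligible in $\R^2$, since $\h$ is continuous on $(0,2\longR)$ by convexity, and that the two vertical fibers $\{0,2\longR\}\times(-1,1)$ have zero two-dimensional Lebesgue measure as well; it therefore suffices to check $\psione(\axialcoordofcylinder,\scoord)=0$ for a.e.\ $(\axialcoordofcylinder,\scoord) \in \doubledrectangle$ with $\axialcoordofcylinder \in (0,2\longR)$ and $\scoord>\h(\axialcoordofcylinder)$. Fix such a point that is also a pointwise convergence point of $(\psionen)$. Since $\{\axialcoordofcylinder\}$ is a compact subset of $(0,2\longR)$, the assumed uniform convergence on compact sets yields $\hn(\axialcoordofcylinder) \to \h(\axialcoordofcylinder)$, so $\scoord > \hn(\axialcoordofcylinder)$ for all sufficiently large $n$. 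This means $(\axialcoordofcylinder,\scoord) \notin SG_{\hn}$, so $\psionen(\axialcoordofcylinder,\scoord)=0$ by $(\hn,\psionen)\in X_{2\longR}^{\rm conv}$, and passing to the limit gives $\psione(\axialcoordofcylinder,\scoord)=0$.

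I do not expect any serious obstacle: the only mildly delicate point is that uniform convergence of $\hn \to \h$ is guaranteed by Definition \ref{def:convergence_in_Dom_FB} only on compact subsets of the open interval $(0,2\longR)$ rather than on the closed interval, but this is harmless, because the two complementary fibers contribute zero two-dimensional Lebesgue measure to $\doubledrectangle$. Altogether the argument is a routine combination of the standard $\Lone$-to-a.e.\ extraction with pointwise convergence of $\hn$ at interior abscissae, and no compactness beyond what has already been proved in Lemma \ref{lem:compactnessofH} is required.
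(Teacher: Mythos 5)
Your proposal is correct and follows essentially the same route as the paper: pass to a pointwise a.e.\ convergent subsequence, discard the Lebesgue-null graph of $h$, and use the local uniform convergence $\hn\to\h$ at interior abscissae to get $\scoord>\hn(\axialcoordofcylinder)$ eventually, hence $\psionen=0$ there and $\psione=0$ in the limit. The only (harmless) refinement in the paper is that the full-measure set $A$ is chosen so that, in addition to pointwise convergence, the representatives of all the $\psionen$ actually vanish on $A\cap(\doubledrectangle\setminus\Omegahn)$ simultaneously — a countable intersection of full-measure sets — which is what your phrase ``$\psionen(\axialcoordofcylinder,\scoord)=0$ by $(\hn,\psionen)\in X_{2\longR}^{\rm conv}$'' implicitly requires.
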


\begin{proof}
Possibly passing to a (not relabelled) 
subsequence, we can assume that 
$(\psionen)$ 
converges to $\psione$ pointwise in $A \subseteq \doubledrectangle$, with 
$\Htwo (\doubledrectangle \setminus A)=0$, and
$\psionen =0$ in  $A \cap 
(\doubledrectangle \setminus \Omegahn)$
for all $n \in \mathbb N$.
We only have to show that
$\psione =0$ in $A \cap (\doubledrectangle \setminus \Omegah)$. We may assume
that $A$ does not intersect the graph of $h$. 
If $(\axialcoordofcylinder,\scoord) \in A \cap (\doubledrectangle \setminus \Omegah)$, 
then $\scoord>\h(\axialcoordofcylinder)$. {}From 
the local uniform convergence of $(\hn)$ to $\h$ 
in $(0,2\longR)$ it follows that
$\scoord>\hn(\axialcoordofcylinder)$  for $n$ large enough, {\it i.e.,} 
$(\axialcoordofcylinder,\scoord)\in A\cap (\doubledrectangle \setminus \Omegahn)$,
and the assertion follows.
\end{proof}

\begin{lemma}[\textbf{Lower semicontinuity of $\FB$}]
\label{lem:lowersemicontinuityofFB}
Let $((\hn, \psionen))\subset X_{2\longR}^{\rm conv}$ be a sequence 
converging  to $(\h,\psione) \in X_{2\longR}^{\rm conv}$ in the sense of Definition \ref{def:convergence_in_Dom_FB}. Then 
\begin{equation}\label{eq:partofF(B)}
\FB(\h,\psione) \leq \liminf_{n\to +\infty}\FB(\hn,\psionen).
\end{equation}
\end{lemma}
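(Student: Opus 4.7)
My plan is to convert the functional $\FB$ into a genuine relaxed area on a slightly enlarged domain, so that classical $L^1$--lower semicontinuity of $\Aone$ can be applied; the remaining terms will either be a fixed constant or will pass to the limit by dominated convergence.

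\textbf{Step 1 (extension to a larger domain).} Choose $\delta>0$ and set $\widehat{R}:=(-\delta,2\longR+\delta)\times(-1-\delta,1+\delta)$. Fix once and for all a function $\Phi\in W^{1,1}(\widehat{R})$ whose trace on $\partial \doubledrectangle$ from outside equals $\varphi$ on $\partialbar\doubledrectangle$ and equals $0$ on $(0,2\longR)\times\{1\}$; such a $\Phi$ is easily produced by setting $\Phi(w_1,w_2):=\sqrt{1-w_2^2}$ in a neighborhood of $\partialbar\doubledrectangle$ (outside $\doubledrectangle$) and $0$ near $(0,2\longR)\times\{1\}$, interpolated smoothly in between. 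For any $(h,\psi)\in X_{2\longR}^{\rm conv}$ define
\begin{equation*}
\widetilde\psi:=\psi\ \text{in}\ \doubledrectangle,\qquad \widetilde\psi:=\Phi\ \text{in}\ \widehat R\setminus\overline{\doubledrectangle}.
\end{equation*}
By the trace formula for the relaxed area recalled in Section \ref{sec:notation_and_preliminaries},
\begin{equation*}
\Aone(\widetilde\psi,\widehat R)=\Aone(\psi,\doubledrectangle)+\int_{\partial\doubledrectangle}|\psi-\Phi|\,d\Hone+\Aone(\Phi,\widehat R\setminus\overline{\doubledrectangle}).
\end{equation*}
Since the trace of $\Phi$ on $\partial\doubledrectangle$ is $\varphi$ on $\partialbar\doubledrectangle$ and $0$ on the top side, comparing with \eqref{eq:F_2l} gives
\begin{equation*}
\FB(h,\psi)=\Aone(\widetilde\psi,\widehat R)-\Htwo(\doubledrectangle\setminus\Omegah)-C,\qquad C:=\Aone(\Phi,\widehat R\setminus\overline{\doubledrectangle}),
\end{equation*}
with $C$ a fixed constant depending only on $\Phi$ and $\widehat R$.

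\textbf{Step 2 (passage to the limit).} Applied to our sequence, $\widetilde\psi_n$ and $\widetilde\psi$ coincide with $\Phi$ on $\widehat R\setminus\overline{\doubledrectangle}$, and $\psi_n\to\psi$ in $L^1(\doubledrectangle)$, so $\widetilde\psi_n\to\widetilde\psi$ in $L^1(\widehat R)$. Classical $L^1$--lower semicontinuity of the relaxed area gives
\begin{equation*}
\liminf_{n\to+\infty}\Aone(\widetilde\psi_n,\widehat R)\geq \Aone(\widetilde\psi,\widehat R).
\end{equation*}
Since $\Htwo(\doubledrectangle\setminus\Omegahn)=\int_0^{2\longR}(1-\hn(\axialcoordofcylinder))\,d\axialcoordofcylinder$, the local uniform convergence $\hn\to h$ on compact subsets of $(0,2\longR)$, combined with the uniform bound $|\hn|,|h|\leq 1$, yields by Lebesgue's dominated convergence theorem
\begin{equation*}
\lim_{n\to+\infty}\Htwo(\doubledrectangle\setminus\Omegahn)=\Htwo(\doubledrectangle\setminus\Omegah).
\end{equation*}
Combining these two facts with the identity from Step 1 applied to both $(h,\psi)$ and $(\hn,\psionen)$ yields \eqref{eq:partofF(B)}.

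\textbf{Obstacles.} The only non-automatic point is constructing $\Phi$ with the required traces while remaining in $W^{1,1}$; since $\varphi$ is continuous on $\R^2$ and vanishes on $(0,2\longR)\times\{-1\}$, while we want $\Phi=0$ near the non-Dirichlet side, the two prescriptions do not clash and a standard cut-off construction suffices. The rest is completely routine: $L^1$--lower semicontinuity of $\Aone$ is classical, and convergence of the subgraph-area reduces to bounded convergence for a uniformly bounded, a.e.\ convergent sequence of one-variable functions.
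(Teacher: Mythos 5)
Your proposal is correct and follows essentially the same route as the paper: the paper's proof also glues $\psi$ to a fixed $W^{1,1}$ extension of the boundary datum on a larger domain (a disc $B\supset\overline\doubledrectangle$ rather than an enlarged rectangle), rewrites the first three terms of $\FB$ as $\Aone(\widehat\psione,B)$ minus a fixed constant, invokes the classical $L^1$-lower semicontinuity of the relaxed area, and passes to the limit in $\Htwo(\doubledrectangle\setminus\Omegahn)$ using the pointwise convergence of $\hn$. The only cosmetic difference is your explicit construction of $\Phi$ (in fact $\varphi$ itself, extended by \eqref{eq:varphi}, already has the required traces and lies in $W^{1,1}$), whereas the paper appeals to a general $W^{1,1}$ extension theorem.
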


\begin{proof}

\smallskip
It is standard\footnote{Indeed, let $\tilde{\dirdatum}:\partial\doubledrectangle \to [0,1]$ be defined as 
$\tilde{\dirdatum}:=\dirdatum$  on $\partialbar \doubledrectangle$, and 
$\tilde\dirdatum := 0$ on $\partial \doubledrectangle \setminus \partialbar \doubledrectangle$.
Let $B\subset \R^2$ be an open disc containing $\overline \doubledrectangle$. 
We extend $\tilde\dirdatum $ to a $W^{1,1}$ function 
in $B \setminus \overline \doubledrectangle$,
 \cite[Thm. 2.16] {Giusti:84},
and we still denote by $\tilde\dirdatum$ such an extension.
For every $\psione \in BV(\doubledrectangle)$, define 
$\widehat \psione
:=\psione$ in $\doubledrectangle$ and
$\widehat \psione :=
\tilde\dirdatum$ in $B \setminus\doubledrectangle$.
We have 
$$
\begin{aligned}
 \Aone(\psione,\doubledrectangle)+\int_{\partialbar \doubledrectangle}  \vert \psione  -\dirdatum\vert d \Hone +\int_{\partial 
\doubledrectangle\setminus \partialbar \doubledrectangle}  \vert \psione \vert d \Hone
=  \Aone(\psione,\doubledrectangle)+\int_{\partial \doubledrectangle}  \vert \psione  -
\tilde \dirdatum\vert d \Hone 
=  \Aone(\widehat \psione,B) -\Aone(\tilde\dirdatum,B\setminus \overline \doubledrectangle),
\end{aligned}
$$
where the last equality follows from \cite[(2.15)] {Giusti:84}. 
Thus the lower semicontinuity of the functional in \eqref{eq:first_three_terms} 
follows from the $\Lone(B)$-lower semicontinuity of the area functional.}
 to show that
the functional 
\begin{equation}\label{eq:first_three_terms}
\psi \in BV(\doubledrectangle, [0,1]) \to \Aone(\psione,\doubledrectangle)+\int_{\partialbar \doubledrectangle}  
\vert \psione  -\dirdatum\vert~ d \Hone +\int_{\partial \doubledrectangle\setminus \partialbar \doubledrectangle} 
 \vert \psione \vert ~d \Hone
\end{equation}
is $\Lone(\doubledrectangle)$-lower semicontinuous.
Since $(\hn)$ converges to $\h$ pointwise in $(0,2\longR)$, we also have 
$\lim_{n \to +\infty} \Htwo (\doubledrectangle \setminus \Omegahn) = \Htwo (\doubledrectangle \setminus \Omegah)$.
The assertion follows.
\end{proof}

The existence statement of
Theorem \ref{teo:main1}
is given by the following

\begin{prop}[\textbf{Existence of a minimizer of \eqref{eq:B_intro}}]\label{Thm:existenceofB}
The minimum
problem \eqref{eq:B_intro}
has a solution.
\end{prop}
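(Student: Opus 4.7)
The plan is to apply the direct method of the calculus of variations, relying on the three lemmas proved above, plus the uniform upper bound provided by Remark \ref{rem:two_explicit_estimates_form_above}.

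First I would pick a minimizing sequence $((\h_n,\psione_n))\subset X_{2\longR}^{\rm conv}$, so that $\FB(\h_n,\psione_n)\to \inf\FB \in [0,\pi]$, where the upper bound comes from the estimate $\FB(-1,0)=\pi$ in \eqref{eq:FB_minus_one_zero}. In particular, $\FB(\h_n,\psione_n)\le \pi+1$ for $n$ large, so I can assume a uniform bound $\FB(\h_n,\psione_n)\le C$.

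Next I extract compactness. For the sequence $(\h_n)\subset \Hspace$, Lemma~\ref{lem:compactnessofH} yields a (not relabelled) subsequence converging uniformly on compact subsets of $(0,2\longR)$ to some $\h\in\Hspace$. For $(\psione_n)$, I need an $\Lone$-compactness argument. Since each $\psione_n$ takes values in $[0,1]$ I already have a uniform $L^\infty$-bound, so it suffices to obtain a uniform $BV$-bound. This is the step requiring a small argument: starting from
\[
\FB(\h_n,\psione_n)=\Aone(\psione_n,\doubledrectangle)-\Htwo(\doubledrectangle\setminus SG_{\h_n})+\int_{\partialbar\doubledrectangle}|\psione_n-\dirdatum|\,d\Hone+\int_{(0,2\longR)\times\{1\}}|\psione_n|\,d\Hone,
\]
I drop the three non-negative boundary integrals and use the trivial bound $\Htwo(\doubledrectangle\setminus SG_{\h_n})\le \Htwo(\doubledrectangle)=4\longR$ to get $\Aone(\psione_n,\doubledrectangle)\le C+4\longR$. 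By the representation \eqref{eq:relaxed_area_one_cod} and the elementary inequality $\sqrt{1+t^2}\ge |t|$, this implies $|D\psione_n|(\doubledrectangle)\le \Aone(\psione_n,\doubledrectangle)\le C+4\longR$. Combined with $\|\psione_n\|_{L^\infty}\le 1$, this gives a uniform bound in $BV(\doubledrectangle)$, so by the standard $BV$-compactness theorem a further subsequence converges in $\Lone(\doubledrectangle)$ to some $\psione\in BV(\doubledrectangle;[0,1])$.

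Finally, Lemma~\ref{lem:closednessinB} ensures that the pair $(\h,\psione)$ belongs to $X_{2\longR}^{\rm conv}$, and Lemma~\ref{lem:lowersemicontinuityofFB} yields
\[
\FB(\h,\psione)\le \liminf_{n\to+\infty}\FB(\h_n,\psione_n)=\inf_{X_{2\longR}^{\rm conv}}\FB,
\]
so $(\h,\psione)$ is the desired minimizer. The main (minor) obstacle is the one indicated above: extracting the $BV$-bound for $(\psione_n)$ despite the presence of the negative term $-\Htwo(\doubledrectangle\setminus SG_{\h_n})$ in $\FB$, which is harmless because this term is bounded below by $-4\longR$.
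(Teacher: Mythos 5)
Your proposal is correct and follows essentially the same route as the paper: direct method with the competitor $(h\equiv-1,\psi\equiv 0)$ giving the upper bound $\pi$, compactness of $\Hspace$ and $BV$-compactness for $(\psi_n)$, then Lemmas \ref{lem:closednessinB} and \ref{lem:lowersemicontinuityofFB}. The only difference is that you spell out the derivation of the uniform $BV$-bound (controlling the negative term $-\Htwo(\doubledrectangle\setminus SG_{h_n})$ by $4\longR$), which the paper leaves implicit; this is a correct and welcome detail.
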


\begin{proof}
The pair $(h_0,\psi_0)$ given by 
$$ 
h_0(\axialcoordofcylinder):=-1,\quad 
\psi_0(\axialcoordofcylinder, \scoord):=0, \qquad (\axialcoordofcylinder,\scoord)\in \doubledrectangle,
$$
is a competitor in \eqref{eq:B_intro}. Hence, 
for a minimizing sequence $((\hn, \psionen))\subset X_{2\longR}^{\rm conv}$, 
recalling \eqref{eq:FB_minus_one_zero} 
we have
\begin{equation}
\lim_{n\rightarrow +\infty} \FB(\hn, \psionen)=\inf \big \{ \FB(\h,\psione): (\h,\psione) \in 
X_{2\longR}^{\rm conv}
 \big \} \leq \pi.
\end{equation}
Thus 
$\sup_{n \in \mathbb N}
\vert D \psionen \vert (\doubledrectangle) < +\infty$, and 
there exists $\psi \in BV(\doubledrectangle, [0,1])$ such that, up to a 
(not relabelled) subsequence, $(\psi_n)$ 
converges to $\psi$ in $\Lone(\Omega)$. 

Using Lemmas \ref{lem:compactnessofH} and 
\ref{lem:closednessinB}, we may assume that $(\h_n)$ 
converges locally uniformly to some $h \in \Hspace$,  
and $\psi=0$ in $\doubledrectangle \setminus \Omegah$.
The assertion then follows from 
Lemma \ref{lem:lowersemicontinuityofFB}.
\end{proof}

Now, we turn to the regularity and qualitative properties of minimizers.
\subsection{Regularity of minimizers of  $\FB$ in $X_{2\longR}^{\rm conv}$}

The next proposition shows (2ii) in Theorem \ref{teo:main1}.
\begin{prop}[\textbf{Analyticity and positivity of a minimizer}]
\label{prop:reg}
Suppose that $(\h,\psione)$ is a 
minimizer of \eqref{eq:B_intro}, and that $h$ is not identically $-1$.
Then $\psione$ is analytic 
in $\Omegah$, and 
\begin{align}\label{div_eq_bis}
{\rm div} \Big(\frac{\nabla \psione}{\sqrt{1+|\nabla \psione|^2}}\Big)=0
\qquad {\rm in}~ \Omegah.
\end{align}
Moreover 
\begin{equation}\label{eq:psi_is_positive}
\psione>0 \quad{\rm in} ~\Omegah.
\end{equation}
\end{prop}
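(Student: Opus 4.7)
My approach has three stages: first, deduce that $\psi$ is an unconstrained local minimizer of the non-parametric area functional inside $\Omegah$; second, import the classical interior regularity theory for such minimizers to get smoothness, the minimal surface equation, and analyticity; third, rule out interior zeros of $\psi$ by combining the strong minimum principle with a boundary-localized bump competitor. For the first stage, given any $\eta\in C_c^\infty(\Omegah)$, the competitor $\widetilde{\psi}:=\psi+\eta$ (extended by $0$ on $\doubledrectangle\setminus SG_h$) has the same traces as $\psi$ on $\partialbar\doubledrectangle$, on $\graphh$, and on $(0,2l)\times\{1\}$, and leaves $\mathcal H^2(\doubledrectangle\setminus SG_h)$ unchanged, so by \eqref{eq:F_2l},
\begin{align*}
\FB(h,\widetilde{\psi})-\FB(h,\psi)=\Aone(\widetilde{\psi},\Omegah)-\Aone(\psi,\Omegah).
\end{align*}
Minimality then forces the right-hand side to be non-negative, and truncation by $\min\{\max\{\cdot,0\},1\}$ (which does not increase the relaxed area and only helps the Dirichlet penalty against $\varphi\in[0,1]$) removes the $[0,1]$-constraint, so $\psi$ is an unconstrained local minimizer of $\Aone(\cdot,\Omegah)$ in $BV(\Omegah)$.

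Consequently, the subgraph $SG_{\psi}$ locally minimizes the perimeter in $\Omegah\times\R$ among Caccioppoli sets whose symmetric differences are compactly contained in $\Omegah\times\R$. By the classical interior regularity theory for minimizing cartesian graphs (see for instance \cite{Giusti:84}), this forces $\psi\in C^\infty(\Omegah)$ and yields the minimal surface equation \eqref{div_eq_bis}; real analyticity then follows from Morrey's theorem on analytic solutions of analytic elliptic equations.

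For the strict positivity, since $\psi\ge 0$ solves a quasilinear uniformly elliptic equation with analytic coefficients (locally, where $\nabla\psi$ is bounded), the strong minimum principle rules out an interior zero unless $\psi\equiv 0$ on the connected component $V\subseteq\Omegah$ containing that zero. Because $h$ is convex, symmetric, $\ge -1$ and not identically $-1$, necessarily $h(0)=h(2l)>-1$, so the vertical segment $\{0\}\times(-1,h(0))$ is contained in $\partial V\cap\partialbar\doubledrectangle$ and $\varphi(0,s)=\sqrt{1-s^2}>0$ there. I will pick $(0,\bar s)$ in this segment with $\bar s\in(-1,h(0))$ and a smooth bump $\eta\in C^\infty_c(\R^2)$ supported in a small ball around $(0,\bar s)$ whose intersection with $\doubledrectangle$ lies in $\overline V$ and avoids $\graphh$, normalised so that $\eta(0,\bar s)=1$ and $0\le\eta\le 1$. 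For $\epsilon>0$ small, $\psi_\epsilon:=\psi+\epsilon\eta$ is an admissible competitor in $X_{2l}^{\rm conv}$ and direct expansion gives
\begin{align*}
\Aone(\psi_\epsilon,\Omegah)-\Aone(\psi,\Omegah)=O(\epsilon^2),\qquad
\int_{\partialbar\doubledrectangle}\bigl(|\psi_\epsilon-\varphi|-|\psi-\varphi|\bigr)\,d\mathcal H^1=-c\,\epsilon
\end{align*}
with $c>0$ (the boundary identity holding once $\epsilon\eta<\varphi$ on the support, which is true for small $\epsilon$), while the remaining terms of $\FB$ are unchanged; hence $\FB(h,\psi_\epsilon)<\FB(h,\psi)$, contradicting the minimality of $(h,\psi)$. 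The main technical hurdle is the second stage, which imports a substantial body of regularity theory for minimizing boundaries/graphs; the third stage also requires some care to place the bump's support inside $\overline V$ while simultaneously reaching a piece of $\partialbar\doubledrectangle$ where $\varphi>0$ and $\psi^{-}=0$, which is precisely what the hypothesis $h\not\equiv-1$ (forcing $h(0)>-1$) guarantees.
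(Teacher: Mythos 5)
Your proof is correct, and stages 1--2 coincide with the paper's argument: after truncating to $[0,1]$ (which the paper leaves implicit), $\psi$ is an unconstrained local minimizer of the area in $\Omegah$, and the interior theory (\cite[Thm.~14.13]{Giusti:84}) gives local Lipschitz regularity, hence analyticity and \eqref{div_eq_bis}. Where you genuinely diverge is in excluding the alternative left open by the strong minimum principle, namely $\psi\equiv 0$. The paper does this by a single global comparison: $\FB(h,0)=\mathcal H^2(SG_h)+\pi>\pi=\FB(-1,0)$ (see \eqref{eq:FB_minus_one_zero}), so $(h,0)$ cannot be a minimizer. You instead run a local first-order perturbation at the Dirichlet boundary, trading an $O(\epsilon^2)$ area increase against a $-c\epsilon$ decrease of the penalty $\int_{\partialbar\doubledrectangle}|\psi-\varphi|\,d\Hone$. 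Both work; yours is component-wise (it only needs $\psi\equiv 0$ on the single component $V$ containing the zero, which is cleaner in the case where $h$ touches $-1$ and $SG_h$ disconnects, a case the paper's dichotomy glosses over), and it isolates exactly which boundary feature --- the strict positivity of $\varphi$ on the vertical sides --- forces positivity; the paper's version is shorter and needs no admissibility check for the bump. Two points to tighten: (i) the component $V$ need not abut $\{0\}\times(-1,h(0))$; it could abut $\{2l\}\times(-1,h(2l))$ instead, but convexity and symmetry of $h$ give $h(0)=h(2l)>-1$, so one of the two vertical Dirichlet segments always borders $V$ and the argument is unchanged; (ii) you should state explicitly that the support of the bump also avoids $(0,2l)\times\{-1\}$ and $(0,2l)\times\{1\}$, where $\varphi=0$ and where the last penalty in \eqref{eq:F_2l} lives respectively, since a trace of size $\epsilon\eta$ there would contribute a positive $O(\epsilon)$ term competing with the gain; choosing $\bar s$ strictly inside $(-1,h(0))$ and the ball radius small enough handles both.
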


\begin{proof}
Since by assumption $h$ is not identically $-1$,
we have that $\Omegah$ is nonempty. Moreover minimality ensures
$$
\int_{\Omegah} \sqrt{1+|D \psione|^2} \leq 
\int_{\Omegah} \sqrt{1+|D \psione_1|^2} 
 $$ 
for any $\psi_1 \in BV(\Omegah)$ with ${\rm spt}(\psi
-\psi_1) \subset \subset \Omegah$.
Thus, 
by  \cite[Thm
14.13]{Giusti:84}, $\psione$ is locally Lipschitz, and hence analytic, in
$\Omegah$, and
\eqref{div_eq_bis} follows. 
Now, let $z\in \Omegah$ and take an open  
disc $B_\eta(z)\subset \subset \Omegah$. Since
$\psione \geq 0$ on $\partial B_\eta(z)$ we find, by
the strong maximum principle  \cite[Thm. C.4]{Giusti:84}, that 
either $\psione$ is identically zero  in $B_\eta(z)$, or
$\psione>0$ in
$B_\eta(z)$. Hence from the analyticity of $\psione$ and the
arbitrariness of
$z$,  we have that either $\psione$ is identically
zero in $\Omegah$ or $\psione>0$ in $\Omegah$. 
Now 
$\FB(h,0)=|\Omegah|+\pi > \FB(-1, 0)=\pi$, see
\eqref{eq:FB_minus_one_zero}.  Thus $(h,0)$ is not a minimizer, and the positivity of $\psione$ in $SG_h$ is achieved. 

\end{proof}

Now, we show (1) of Theorem \ref{teo:main1};
moreover, we prove  in particular that the (symmetric) 
convex function $h$ cannot touch
and detouch the value $-1$.

\begin{lemma}\label{lemma_reg2_bis}
	Suppose that $(\h,\psione)$ is a 
	minimizer of \eqref{eq:B_intro} such 
	that:
	\begin{itemize}
		\item[(i)]
		$h$ is not identically $-1$;
		\item[(ii)]
		$\psione$ is
		symmetric with respect to $\{\axialcoordofcylinder=\longR\} \cap \doubledrectangle$.
	\end{itemize}
	Then $$h(\axialcoordofcylinder)>-1
	\qquad \forall\axialcoordofcylinder\in [0,2\longR].
	$$
\end{lemma}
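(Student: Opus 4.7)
The plan is to argue by contradiction, showing that a minimizer with $h(\axialcoordofcylinder^*)=-1$ for some $\axialcoordofcylinder^*\in[0,2\longR]$ must satisfy $\FB(h,\psione)>\pi$, which contradicts its minimality in view of $\FB(-1,0)=\pi$ (see \eqref{eq:FB_minus_one_zero}). First, since $h\in\Hspace$ is convex and satisfies $h(\axialcoordofcylinder)=h(2\longR-\axialcoordofcylinder)$, its minimum is attained at $\axialcoordofcylinder=\longR$, so the hypothesis forces $h(\longR)=-1$. Consequently $\subgraphh\cap\{\axialcoordofcylinder=\longR\}=\emptyset$ and $\subgraphh$ splits as $\subgraphh^-\sqcup\subgraphh^+$ with $\subgraphh^\pm:=\subgraphh\cap\{\axialcoordofcylinder\lessgtr\longR\}$, both of positive Lebesgue measure by convexity since $h\not\equiv -1$; on them $\psione$ is analytic and strictly positive by Proposition~\ref{prop:reg}.

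The heart of the proof is a projection argument on integral currents. Via the representation \eqref{eq:FA}, I would interpret $\FB(h,\psione)$ as the mass of an integer $2$-current $T\subset \overline{\doubledrectangle}\times[0,1]$ obtained by gluing the graph of $\psione$ over $\subgraphh$ to the three families of vertical walls of heights $|\psione-\dirdatum|$ on $\partialbar\doubledrectangle$, $|\psione^-|$ on $\graphh\setminus\{\scoord=-1\}$, and $\dirdatum$ on $\Lh$. The assumed symmetry of $\psione$ gives $\textup{mass}(T^-)=\FB(h,\psione)/2$ for $T^-:=T\res\{\axialcoordofcylinder\leq\longR\}$. Since $\subgraphh\cap\{\axialcoordofcylinder=\longR\}=\emptyset$, Federer's slicing yields $\partial T^-=(\partial T)\res\{\axialcoordofcylinder\leq\longR\}$; a direct inspection shows that this is a closed $1$-current in $\R^3$ consisting of (i) the half-circle $\{(0,\scoord,\sqrt{1-\scoord^2}):\scoord\in[-1,1]\}$, (ii) the bottom segment $\{(\axialcoordofcylinder,-1,0):\axialcoordofcylinder\in[0,\longR]\}$, whose tangent lies in the $\axialcoordofcylinder$-direction, (iii) the curve $\{(\axialcoordofcylinder,h(\axialcoordofcylinder),0):\axialcoordofcylinder\in[0,\longR]\}$ at height zero along $\graphh$, and, if $h(0)<1$, the additional segment $\{(0,\scoord,0):\scoord\in[h(0),1]\}$ closing the gap on the left edge.

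I would then push $T^-$ forward by $\pi_{23}(\axialcoordofcylinder,\scoord,w_3):=(\scoord,w_3)$. The bottom segment disappears (its tangent lies in $\ker d\pi_{23}$), the half-circle projects to the upper semicircle $\{\scoord^2+w_3^2=1,\ w_3\geq 0\}$, and the $\graphh$-curve together with the possible left segment project to the diameter $[-1,1]\times\{0\}$; hence $\pi_{23*}(\partial T^-)$ coincides with the oriented boundary of the half-disc $D:=\{\scoord^2+w_3^2\leq 1,\ w_3\geq 0\}$. Integrating the area $2$-form $d\scoord\wedge dw_3$ via Stokes' theorem gives $\textup{mass}(\pi_{23*}T^-)\geq |D|=\pi/2$, and since projection cannot increase the mass of an integer current, $\textup{mass}(T^-)\geq \pi/2$. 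The inequality is \emph{strict}: on the graph of $\psione$ one has the pointwise identity
\begin{equation*}
\sqrt{1+|\nabla\psione|^2}-|\psione_{\axialcoordofcylinder}|=\frac{1+\psione_{\scoord}^2}{\sqrt{1+|\nabla\psione|^2}+|\psione_{\axialcoordofcylinder}|}>0,
\end{equation*}
so the projection strictly decreases the area of the graph over $\subgraphh^-$, which has positive measure. By symmetry the same strict bound holds for $T^+$, and summing yields $\FB(h,\psione)>\pi$, the desired contradiction.

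The principal obstacle is the currents book-keeping: assembling the graph and the various vertical walls into a single integer $2$-current $T$ of mass $\FB(h,\psione)$ with the prescribed boundary, consistently handling the jumps of $\psione$ on $\graphh$ and on $\partialbar\doubledrectangle$ together with the orientations of all the walls, and rigorously invoking Federer's slicing theorem at the cut $\{\axialcoordofcylinder=\longR\}$ to rule out any spurious boundary contribution there.
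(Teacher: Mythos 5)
Your proposal is correct and, at bottom, proves the same inequality as the paper: the half of the configuration contained in $\{\axialcoordofcylinder\le \longR\}$ has a ``shadow'' on the $(\scoord,w_3)$--plane that must fill the half--disc $D$ of area $\pi/2$, and the graph over $\subgraphh\cap\{\axialcoordofcylinder<\longR\}$ (which has positive measure) loses a strictly positive amount of area under this projection. The implementation, however, is genuinely different. The paper never leaves the BV/trace calculus: setting $w_1^0:=\min\{\axialcoordofcylinder\in(0,\longR]:h(\axialcoordofcylinder)=-1\}$, it uses the pointwise bound $\sqrt{1+|\nabla \psione|^2}>|\partial_{\axialcoordofcylinder}\psione|$ on $\subgraphh$, exchanges the order of integration by means of $h^{-1}$, and on each horizontal line $\{\scoord=s\}$, $s\in(-1,h(0))$, combines the fundamental theorem of calculus on $(0,h^{-1}(s))$ with the triangle inequality against the boundary penalizations $|\psione(0,s)-\dirdatum(0,s)|$ and $|\psione^{-}(h^{-1}(s),s)|$ to conclude $\tfrac12\FB(h,\psione)>\int_{-1}^{1}\dirdatum(0,s)\,ds=\tfrac12\FB(-1,0)$. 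This chain of inequalities is precisely the fibrewise (Fubini) version of your estimate $\mathrm{mass}(T^-)>\mathrm{mass}(\pi_{23\ast}T^-)\ge |D|$. Your formulation makes the geometry more transparent and is closer in spirit to the current-theoretic comparisons used later in the paper (Step 2 of Theorem \ref{teo:boundary_regularity}), at the price of the assembling and orientation bookkeeping you already flagged; the paper's version is elementary and self-contained.

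One point in your outline needs a word more than you give it: the vanishing of the slice $\langle T,\axialcoordofcylinder,\longR\rangle$ does not follow from $\subgraphh\cap\{\axialcoordofcylinder=\longR\}=\emptyset$ alone, since the vertical walls over the bottom edge and over $\graphh$ could a priori reach the pinch point $(\longR,-1)$. It does hold: if $w_1^0<\longR$ then $T$ vanishes identically on the strip $\{w_1^0<\axialcoordofcylinder<2\longR-w_1^0\}$, so the cut is clean; if $w_1^0=\longR$ the slice is an integral $1$--cycle supported on the vertical line $\{(\longR,-1)\}\times\R$, hence zero. With this, and the routine verification that $\partial T$ is the closed curve you describe (which is the generalized-graph construction recalled in Section \ref{sec:notation_and_preliminaries}), your argument goes through.
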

\begin{proof}
Since $h \in \Hspace$, it is symmetric with respect to 
	$\{\axialcoordofcylinder=\longR\} \cap \doubledrectangle$; 
hence, by assumption (ii), 
we may restrict our argument to $[0,\longR]$.
	Assume by contradiction that there exists $\overline w_1 
	\in (0,\longR]$ such that $\h( \overline w_1)=-1$. 
	Recall that $h$ is convex,
	nonincreasing in $[0,\longR]$ and continuous at $\longR$.
	Let $$w_1^0:=
	\min\{\axialcoordofcylinder \in (0,\longR]:h(\axialcoordofcylinder)=-1\}.$$ 
By assumption (i)
	we have $w_1^0>0$ and, by convexity,
$h$ is strictly decreasing in $(0,w_1^0)$. 
	We have,
	using \eqref{eq:FA} and \eqref{eq:partial_D_G_h},
\begin{equation}\label{eq:first_rewritten_form}
	\begin{aligned}
		&
		\frac12\FB(\h,\psione)
		= \frac12\left[ \Aone(\psione, \Omegah)
		+
		\int_{\partialbar \Omegah}  \vert \psione  -\dirdatum\vert ~d \Hone 
		+ 
		\int_{\graphh \setminus \{w_2=-1\}}\vert \psione^- \vert ~d \Hone 
		+
		\int_{\Lh} \dirdatum ~d \Hone \right] 
		\\
		= & \frac12 \Aone(\psione, \Omegah)
		+
		\int_{(-1,\h(0))}  \vert \psione(0,\scoord)
		-\dirdatum(0,\scoord)\vert ~d \scoord 
		+
		\int_{(0,w_1^0)}  \vert \psione 
		(\axialcoordofcylinder,-1)  -\dirdatum(\axialcoordofcylinder,-1)\vert 
		~d \axialcoordofcylinder 
		\\
		&\qquad
		\ \ \qquad 
		+\int_{G_{h{\llcorner{(0,w_1^0)}}}}
		\vert \psione^- \vert d \Hone
		+
		\int_{(\h(0),1)} \dirdatum(0,\scoord) d \scoord.
	\end{aligned}
\end{equation}
Recalling Proposition \ref{prop:reg}, we have 
$$
		\frac12\FB(\h,\psione) = 
			\int_{0}^{w_1^0}\int_{-1}^{h(\axialcoordofcylinder)} 
			\sqrt{1+\vert \nabla \psione\vert^2}~
			d\scoord d\axialcoordofcylinder.
$$	
Now, 
	we argue by slicing 
	the rectangle $R_l=(0,\longR)\times (-1,1)$
	with lines $\{\axialcoordofcylinder=\tau\}, \tau \in (0,\longR)$. 
	Recalling the expression of $\Omegah$ (which is non empty 
	by assumption (i)), and neglecting the third addendum in 
\eqref{eq:first_rewritten_form},
	\begin{equation}\label{eq:slicing}
		\begin{aligned}
			\frac12\FB(\h,\psione)
			=&   
			\int_{0}^{w_1^0}\int_{-1}^{h(\axialcoordofcylinder)} 
			\sqrt{1+\vert \nabla \psione\vert^2}~
			d\scoord d\axialcoordofcylinder
			\\
			&+ 
			\int_{(-1,\h(0))}  \vert \psione(0,\scoord)  
			-\dirdatum(0,\scoord)\vert d \scoord  +
			\int_{(0,w_1^0)}  
			\vert \psione (\axialcoordofcylinder,-1)  
			-\dirdatum(\axialcoordofcylinder,-1)\vert ~d \axialcoordofcylinder 
			\\
			&
			+\int_{G_{h{\llcorner{(0,w_1^0)}}}}\vert \psione^- \vert d \Hone
			+
			\int_{(\h(0),1)} \dirdatum(0,\scoord) d \scoord
			\\ 
			\geq 
			&\int_{0}^{w_1^0}\int_{-1}^{h(\axialcoordofcylinder)} \sqrt{1+\vert\nabla \psione\vert^2}
			d\scoord d\axialcoordofcylinder
			+
			\int_{(-1,\h(0))}  \vert \psione(0,\scoord)  -\dirdatum(0,\scoord)\vert d \scoord 
			\\
			&+ \int_{G_{h{\llcorner{(0,w_1^0)}}}}\vert \psione^- \vert d \Hone
			+
			\int_{(\h(0),1)} \dirdatum(0,\scoord) d \scoord
			\\
			> & 
			\int_{0}^{w_1^0}
			\int_{-1}^{h(\axialcoordofcylinder)} \vert \grad_\axialcoordofcylinder 
			\psione (\axialcoordofcylinder,\scoord) \vert 
			d\scoord d\axialcoordofcylinder
			+
			\int_{(-1,\h(0))}  \vert \psione(0,\scoord) 
			-\dirdatum(0,\scoord)\vert d \scoord 
			\\
			&+ \int_{G_{h{\llcorner(0,w_1^0)}}}\vert \psione^- \vert d \Hone
			+
			\int_{(\h(0),1)} \dirdatum(0,\scoord) d \scoord,
		\end{aligned}
	\end{equation}
	where $\grad_{w_1}$ stands for the partial derivative with respect to $w_1$.
	
	Now, let 
	$$h^{-1}:[-1,\h(0)]\rightarrow [0,w_1^0]$$
	be the inverse of $h\llcorner
[0,w_1^0]$.
	Neglecting  $\sqrt{1+(\frac{d}{dw_2} h^{-1})^2}$ in the third addendum on the
	right-hand side of \eqref{eq:slicing}, using also that $\varphi \geq 0$ and
$\psi \geq 0$ (Proposition \ref{prop:reg}), 
	we deduce
	\begin{equation*}
		\begin{aligned}
			\frac12\FB(\h,\psione)
			> 
			&\int_{-1}^{\h(0)}\int_{0}^{\h^{-1}(\scoord)} \vert 
			\grad_\axialcoordofcylinder  \psione (\axialcoordofcylinder,\scoord) 
			\vert ~
			d\axialcoordofcylinder
			d\scoord 
			+ 
			\int_{(-1,\h(0))}  \vert \psione(0,\scoord)  -\dirdatum(0,\scoord)\vert d \scoord 
			\\
			& +\int_{(-1,h(0))}\psione^- (h^{-1}(\scoord),\scoord)d\scoord 
			+\int_{(\h(0),1)} \dirdatum(0,\scoord) d \scoord
			\\
			\geq &
			\int_{-1}^{\h(0)}\Big\vert\int_{0}^{\h^{-1}(\scoord)}  
			\grad_\axialcoordofcylinder  \psione (\axialcoordofcylinder,\scoord) 
			d\axialcoordofcylinder \Big\vert d\scoord
			-
			\int_{(-1,\h(0))} \psione(0,\scoord) d \scoord
			\\
			&  +\int_{(-1,h(0))}\psione^- (h^{-1}(\scoord),\scoord)d\scoord 
			+\int_{(-1,1)} \dirdatum(0,\scoord) d \scoord
			\\
			\geq &
			\int_{(-1,\h(0))}\lvert  \psione (\h^{-1}(\scoord),\scoord) -\psione (0,\scoord)\rvert d\scoord
			-
			\int_{(-1,\h(0))} \psione(0,\scoord) d \scoord 
			\\
			& +\int_{(-1,\h(0))}\psione^- (h^{-1}(\scoord),\scoord)d\scoord 
			+ \int_{(-1,1)} \dirdatum(0,\scoord) d \scoord
			\nonumber\\
			\geq & \int_{(-1,1)}\varphi(0,\scoord)d\scoord =\frac12\FB(-1,0).
			\nonumber 
		\end{aligned}
	\end{equation*}
	Hence the value of $\FB$ on 
	the pair $(\h\equiv-1, \psi \equiv 0)$  
	is smaller than $\FB(\h,\psi)$, thus
	contradicting the minimality of $(h,\psi)$. 
\end{proof}

The next lemma concludes, in particular, the proof of the first statement 
in Theorem \ref{teo:main1}.
\begin{lemma}[\textbf{Symmetry of minimizers}]\label{lem:symmetry_min}
 Every minimizer $(\h,\psione)$ of \eqref{eq:B_intro} is 
such that 
$\psione$ is
 symmetric with respect to $\{\axialcoordofcylinder=\longR\} \cap \doubledrectangle$.
\end{lemma}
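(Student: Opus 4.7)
The plan is to combine a symmetrization argument with the strict convexity of the area integrand. Given a minimizer $(h,\psi)$, I introduce the reflected function $\widetilde\psi(w_1,w_2):=\psi(2\longR-w_1,w_2)$. Since $h\in\Hspace$ satisfies $h(w_1)=h(2\longR-w_1)$, the subgraph $\Omegah$ is symmetric about $\{w_1=\longR\}$, so $\widetilde\psi$ vanishes outside $\Omegah$ and $(h,\widetilde\psi)\in X_{2\longR}^{\rm conv}$. A direct change of variables, using the symmetry of $\dirdatum$ on $\partialbar\doubledrectangle$ and the left-right symmetry of $\doubledrectangle$, yields $\FB(h,\widetilde\psi)=\FB(h,\psi)$, so $\widetilde\psi$ is also a minimizer. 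Each term defining $\FB$ is convex in $\psi$ (the relaxed area $\Aone(\cdot,\doubledrectangle)$ is convex, the term $-\Htwo(\doubledrectangle\setminus\Omegah)$ does not depend on $\psi$, and the boundary penalties are $L^1$-type), so the symmetrization $\psi^\sharp:=\tfrac12(\psi+\widetilde\psi)$ is itself a minimizer:
\[
\FB(h,\psi^\sharp)\le\tfrac12\FB(h,\psi)+\tfrac12\FB(h,\widetilde\psi)=\FB(h,\psi).
\]

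If $h\equiv -1$ the claim is trivial, since then $\Omegah=\emptyset$ and $\psi\equiv 0$. Otherwise, $\psi^\sharp$ is symmetric by construction and $(h,\psi^\sharp)$ is a minimizer with $h\not\equiv -1$, so Lemma \ref{lemma_reg2_bis} applied to $(h,\psi^\sharp)$ yields $h>-1$ on $[0,2\longR]$; in particular $\Omegah$ is a connected open set that meets the axis $\{w_1=\longR\}$. This step is crucial: without it, $\Omegah$ could split into two mirror components and the final argument below would leave a free constant on each.

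The third step promotes the convexity inequality above to an equality, since $\psi^\sharp$ attains the minimum value. Isolating the absolutely continuous part of $\Aone$ and using the strict convexity of $p\mapsto\sqrt{1+|p|^2}$, I get $\grad^a\psi=\grad^a\widetilde\psi$ almost everywhere in $\doubledrectangle$. By Proposition \ref{prop:reg}, both $\psi$ and $\widetilde\psi$ are analytic in $\Omegah$, whence $\psi-\widetilde\psi$ is constant on the connected open set $\Omegah$. Evaluating on $\{w_1=\longR\}\cap\Omegah\neq\emptyset$, where $\widetilde\psi(\longR,w_2)=\psi(\longR,w_2)$ trivially, forces this constant to vanish. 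Hence $\psi=\widetilde\psi$ on $\Omegah$, and since both are zero outside $\Omegah$, on all of $\doubledrectangle$, proving the symmetry of $\psi$.

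The main obstacle I anticipate is the possibly disconnected topology of $\Omegah$: on components disjoint from the central axis, strict convexity alone only gives ``constant-per-component'' information, so the symmetry claim could not be closed by a direct gradient-matching argument. This obstacle is bypassed exactly by first symmetrizing to manufacture the symmetric minimizer $\psi^\sharp$ and then invoking Lemma \ref{lemma_reg2_bis} to exclude connected components of $\Omegah$ that miss $\{w_1=\longR\}$.
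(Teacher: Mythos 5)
Your proof is correct, but it reaches the conclusion by a genuinely different mechanism from the paper's. The paper does not average: it keeps $\psi$ on the left half $(0,\longR)\times(-1,1)$ and replaces the right half by the mirror image of the left half, shows via a localized functional $\FB(h,\cdot\,;I)$ that the two halves carry equal energy (otherwise reflecting the cheaper half would strictly decrease $\FB$), so that this reflected competitor is again a minimizer; it then applies Lemma \ref{lemma_reg2_bis} to that (symmetric) minimizer to get $h>-1$, hence $SG_h$ connected, and concludes by the identity theorem for real-analytic functions, since $\psi$ and the reflected minimizer agree on the open set $SG_h\cap((0,\longR)\times(-1,1))$. You instead take the full reflection $\widetilde\psi$, pass to the midpoint $\psi^\sharp$, and exploit convexity of $\FB(h,\cdot)$ together with strict convexity of $p\mapsto\sqrt{1+|p|^2}$ to force $\grad^a\psi=\grad^a\widetilde\psi$ a.e., then use connectivity of $\Omegah$ and the trivial identity $\widetilde\psi=\psi$ on $\{\axialcoordofcylinder=\longR\}$. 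Both arguments invoke Lemma \ref{lemma_reg2_bis} on a symmetric minimizer manufactured from $\psi$ for exactly the same purpose (connectivity), and both use Proposition \ref{prop:reg}. What your route buys: you avoid the localized functional and the implicit check that gluing creates no interface term on $\{\axialcoordofcylinder=\longR\}$. What it costs: you should spell out that $\Aone(\cdot,\doubledrectangle)$ is convex on $BV$ (true, e.g.\ because it is the total variation of the $\R^3$-valued measure $(\mathcal H^2\res\doubledrectangle, D\psi)$, which depends affinely on $\psi$), that the Lebesgue decomposition of $D\psi^\sharp$ splits linearly so the equality in the convexity inequality localizes term by term and then pointwise in the absolutely continuous part, and that the resulting a.e.\ identity of gradients upgrades to an everywhere identity by the analyticity of both functions in $\Omegah$. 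All of these are sound, so the argument stands.
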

\begin{proof}
Let $I \subset (0,2\longR)$ be an open interval; consistently
with \eqref{eq:F_2l}, and since $\psi$ is continuous in $SG_h$, we set
\begin{align*}
\FB(\h, \psione; I):= &	
\Aone(\psione, I\times(-1,1))-
\Htwo\Big(I\times(-1,1) \setminus \Omegah\Big)+
\int_{(\partialbar\doubledrectangle) \cap (\overline I \times [-1,1)) }  \vert \psione  -\dirdatum\vert ~d \Hone \nonumber
\\
&+ \int_{(\partial \doubledrectangle \setminus \partialbar\doubledrectangle) 
\cap (\overline I \times (-1,1])}  
\vert \psione \vert~ d\Hone.
\end{align*}  
Recall that $\h\in \Hspace$, hence  its graph 
is symmetric with respect to $\{\axialcoordofcylinder=\longR\}
\cap \doubledrectangle$. 
Define
$\tilde \psione := \psione $ on $(0,\longR)\times (-1,1)$ 
and  $\tilde \psione (\axialcoordofcylinder,\scoord) := \psione (2\longR -
\axialcoordofcylinder,\scoord)$ 
for  $(\axialcoordofcylinder,
\scoord) \in (\longR, 2\longR)\times (-1,1)$, in particular 
the graph of 
$\tilde\psione$ is symmetric with 
respect to $\{\axialcoordofcylinder=\longR\} \cap 
\doubledrectangle$. 
Since $\FB 
(\h , \psione;(0,\longR))= 
\FB
(\h ,\psione;(\longR,2\longR))$, it follows 
$\FB(\h , \tilde\psione)= \FB(\h, \psione)$ 
for, if $\FB 
(\h , \psione;(0,\longR))<\FB
(\h ,\psione;(\longR,2\longR))$, then 
$\FB(\h , \tilde\psione)< \FB(\h, \psione)$ which contradicts the 
minimality of $(h,\psione)$. 

Now, if $h\equiv-1$ then the minimizer $\psi=0$ is symmetric. On the other hand, if $h$ is not 
identically $-1$, due to Lemma \ref{lemma_reg2_bis}, we have that $h(l)>-1$ so that $SG_h$ is open and connected, and thus the two analytic functions $\psi$ and $\tilde \psi$, coinciding on $SG_h\cap R_l$, 
must coincide. Hence, $\psi=\tilde \psi$ and $\psi$ is symmetric.
\end{proof}

Now, we prove items (2i) and (2iii) of Theorem \ref{teo:main1}:
 the proof will be a consequence of 
the next lemma and Theorem \ref{teo:boundary_regularity}. Recall the definition
of $\partial_D SG_h$ in \eqref{eq:partial_D_G_h}.

\begin{lemma}\label{lem:attaining}
 Let $(h,\psi)$ be a minimizer of \eqref{eq:B_intro} with $h$ not identically $-1$. 
Then $\psi$ attains the boundary condition on 
$\partial_D\Omegah$.
\end{lemma}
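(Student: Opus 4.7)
I argue by contradiction: suppose the set
$E:=\{x\in\partial_D\Omega_h : \psi^-(x)\neq \varphi(x)\}$
has positive $\mathcal H^1$ measure. By Lemma \ref{lemma_reg2_bis} we have $h>-1$ on $[0,2l]$, so $\Omega_h$ is open and connected and $\partial_D\Omega_h = \partial_D R_{2l}\setminus L_h$ is a union of (portions of) the three flat edges of $R_{2l}$. Using the symmetry of $\psi$ (Lemma \ref{lem:symmetry_min}), it is enough to treat the two cases $E\subset \{0\}\times(-1,h(0))$ and $E\subset (0,2l)\times\{-1\}$.

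The strategy is to build a competitor $\tilde\psi$ with $(h,\tilde\psi)\in X_{2l}^{\mathrm{conv}}$ and $\FB(h,\tilde\psi)<\FB(h,\psi)$. The natural construction is to enlarge $R_{2l}$ slightly past the defective edge: choose a thin outer rectangular strip of width $\eta>0$ along the edge containing $E$, call the union $\widehat R\supset R_{2l}$, and extend $\psi$ by
\begin{equation*}
\widehat\psi:=\psi\ \text{on}\ R_{2l},\qquad \widehat\psi:=\varphi\ \text{on}\ \widehat R\setminus R_{2l}.
\end{equation*}
By the BV trace formula recalled in Section \ref{sec:notation_and_preliminaries},
\begin{equation*}
\overline{\mathcal A}(\widehat\psi,\widehat R)
= \overline{\mathcal A}(\psi,R_{2l})
 + \int_{E^\star} |\psi^- -\varphi|\, d\mathcal H^1
 + \overline{\mathcal A}(\varphi,\widehat R\setminus R_{2l}),
\end{equation*}
where $E^\star\supset E$ is the whole portion of $\partial_DR_{2l}$ shared with $\widehat R\setminus R_{2l}$. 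The last summand depends only on $\widehat R$, so, up to additive constants depending neither on $\psi$ nor on the competitor, the problem of minimizing $\FB(h,\cdot)$ becomes that of minimizing $\overline{\mathcal A}(\widehat\psi,\widehat R)$ in the class of admissible extensions.

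Next, I would produce $\widehat\psi'$ with $\overline{\mathcal A}(\widehat\psi',\widehat R)<\overline{\mathcal A}(\widehat\psi,\widehat R)$ by smoothing out the boundary jump on $E$. Fix $\delta\ll\eta$ and a tubular neighbourhood $T_\delta\subset \widehat R$ of $E$, with $T_\delta\cap R_{2l}$ compactly contained in $\Omega_h$ (possible because, by Lemma \ref{lemma_reg2_bis} and continuity of $h$, the set $E$ is at positive distance from the free boundary $G_h$). On $T_\delta$ replace $\widehat\psi$ by any Lipschitz interpolation $\widehat\psi'$ of its values on $\partial T_\delta$ which is continuous across $E$; outside $T_\delta$ keep $\widehat\psi$ unchanged. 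The function $\widehat\psi'$ has no jump on $E$ and satisfies $0\leq \widehat\psi'\leq 1$. Restricting $\widehat\psi'$ back to $R_{2l}$ defines $\tilde\psi$ with $(h,\tilde\psi)\in X_{2l}^{\mathrm{conv}}$, since $\tilde\psi=0$ on $R_{2l}\setminus SG_h$ is inherited from $\widehat\psi$ (the perturbation is supported in $T_\delta\Subset \Omega_h$). Tracing back through the trace formula, $\FB(h,\tilde\psi)-\FB(h,\psi)=\overline{\mathcal A}(\widehat\psi',\widehat R)-\overline{\mathcal A}(\widehat\psi,\widehat R)$, yielding the desired contradiction.

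The hard part is proving the strict inequality $\overline{\mathcal A}(\widehat\psi',\widehat R)<\overline{\mathcal A}(\widehat\psi,\widehat R)$. The obstacle is that a naive linear interpolation of slope $(\psi^- -\varphi)/\delta$ contributes $\int_{E}\sqrt{\delta^2+(\psi^- -\varphi)^2}\, d\mathcal H^1$ to the area of the graph, which exceeds the jump contribution $\int_E|\psi^- -\varphi|\, d\mathcal H^1$ by an amount of order $\delta^2/|\psi^- -\varphi|$. The resolution is that $\widehat\psi$ already carries genuine graph area in $T_\delta$: the tube area of the original $\widehat\psi$ equals the "vertical wall" $\int_E|\psi^- -\varphi|\, d\mathcal H^1$ \emph{plus} the graph area of $\psi$ on $T_\delta\cap \Omega_h$ and of $\varphi$ on $T_\delta\setminus R_{2l}$. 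Choosing the interpolant to coincide with these lateral graphs on $\partial T_\delta$, the strict convexity of $v\mapsto\sqrt{1+|v|^2}$ and the analyticity of $\psi$ near $E^\star$ (Proposition \ref{prop:reg}) give, for $\delta$ small, a strict drop of order $\delta$. Informally, this is the BV analogue of the classical Jenkins--Serrin attainment criterion on flat segments (zero boundary mean curvature), which prevents an area-minimizer from concentrating on a flat interior line.
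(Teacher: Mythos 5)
Your overall strategy is not the one the paper uses, and as written it has a genuine gap at exactly the point you yourself flag as ``the hard part''. The paper's proof is a one-line citation: for fixed $h$, the minimality of $(h,\psi)$ means $\psi$ minimizes the area functional on $\Omegah$ penalized by $\int_{\partial_D\Omegah}|\psi-\varphi|\,d\mathcal H^1$, and since $\partial_D\Omegah$ is a union of flat segments the boundary datum is attained by the boundary regularity theorem \cite[Thm.~15.9]{Giusti:84} (trace attainment at boundary points of non-negative mean curvature). Your proposal instead tries to reprove this attainment result from scratch by a cut-and-paste competitor, and the decisive step --- the strict inequality $\overline{\mathcal A}(\widehat\psi',\widehat R)<\overline{\mathcal A}(\widehat\psi,\widehat R)$ --- is only asserted, never derived. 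You correctly observe that a linear interpolation over a strip of width $\delta$ costs $\int_E\sqrt{\delta^2+(\psi^--\varphi)^2}\,d\mathcal H^1\geq\int_E|\psi^--\varphi|\,d\mathcal H^1$, i.e.\ smoothing never beats the vertical wall; but the claimed resolution via strict convexity of $v\mapsto\sqrt{1+|v|^2}$ and ``analyticity near $E^\star$'' does not produce a concrete competitor with strictly smaller area. Note in particular that Proposition \ref{prop:reg} gives analyticity only in the open set $\Omegah$: if the trace were not attained, the gradient of $\psi$ would necessarily degenerate as one approaches $E$, so no regularity ``near $E^\star$'' is available. A structural reason the sketch cannot be completed as stated is that trace attainment genuinely fails at boundary points of negative mean curvature; any correct proof must therefore use the flatness of the segments in an essential, quantitative way (barriers, boundary density estimates, or a reflection/blow-up argument), which your argument never does beyond the informal closing remark about Jenkins--Serrin.

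Two smaller points. First, the claim that $E$ is at positive distance from the free boundary $G_h$ is unjustified, since $E$ could accumulate at the corner $(0,h(0))$; this is repairable by passing to a compact subset of $E$ of positive measure away from the corners, but it should be said. Second, your reduction to minimizing $\overline{\mathcal A}(\widehat\psi,\widehat R)$ via the trace formula of Section \ref{sec:notation_and_preliminaries} is correct and is precisely the standard way of placing the problem in the framework of \cite{Giusti:84}; had you followed that thread one step further and invoked the boundary attainment theorem there instead of attempting to reprove it, you would have recovered the paper's argument.
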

\begin{proof}
The result follows from \cite[Theorem 15.9]{Giusti:84},  
since $\partial_D\Omegah$ is union of three segments. 
\end{proof}

\begin{remark}\label{rem:h_id_1}
 In the hypotheses of Lemma 
\ref{lemma_reg2_bis}, if $h\equiv1$ then the graph of $h$ is a 
segment and, as in Lemma \ref{lem:attaining},  $\psi=0$ on $G_h$.
\end{remark}

The conclusion of the proof of Theorem \ref{teo:main1} (2iii)
is given
by the following delicate result.

\begin{theorem}[\textbf{Boundary regularity}]\label{teo:boundary_regularity} Assume there is a minimizer 
$(h,\psi) \in 
X_{2\longR}^{{\rm conv}}$ 
of \eqref{eq:B_intro} with $h$ not identically $-1$. 
Then there exists another minimizer 
$(\widetilde h,\widetilde\psi) \in 
X_{2\longR}^{{\rm conv}}$ 
of 
\eqref{eq:B_intro} having the following properties:
\begin{itemize}
\item[(i)] 
$\widetilde h(0) =1 =
\widetilde h(1)$,
\item[(ii)]
 $\widetilde \psi$ is continuous up
to the boundary of $SG_{\widetilde h}$, 
\item[(iii)]
$\widetilde\psi = 0 \quad{\rm on}~ G_{\widetilde h}$.
\end{itemize}
\end{theorem}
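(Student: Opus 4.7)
The strategy is to transform the given minimizer into a nicer one in two separate stages: first fixing the endpoint values of $h$, then eliminating the vertical-wall contribution on $G_h$. By Proposition \ref{prop:reg} and Lemmas \ref{lemma_reg2_bis}, \ref{lem:symmetry_min} and \ref{lem:attaining}, the starting minimizer $(h,\psi)$ already satisfies $h>-1$ on $[0,2l]$, is symmetric about $\{w_1=l\}$, and $\psi$ is positive and analytic in $SG_h$ with the Dirichlet trace $\psi=\varphi$ on $\partial_D SG_h$. The only remaining obstructions to the thesis are (a) endpoint values $h(0)=h(2l)<1$, producing the lateral penalty $\int_{L_h}\varphi\,d\mathcal H^1$, and (b) a possibly positive trace $\psi^-$ along the free boundary $G_h$.

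For (i), I would build a competitor $(h_1,\psi_1)\in X_{2l}^{\mathrm{conv}}$ with $h_1(0)=h_1(2l)=1$ and $\FB(h_1,\psi_1)\le \FB(h,\psi)$. Concretely, take $h_1:=h\vee \ell_0$, where $\ell_0$ is a convex symmetric piecewise-affine chord from $(0,1)$ and $(2l,1)$ to suitably chosen interior tangent points of $h$, and extend $\psi$ onto the new thin strips $SG_{h_1}\setminus SG_h$ by a $W^{1,1}$ function matching $\varphi$ on the Dirichlet sides and vanishing on the newly created portion of $G_{h_1}$. Using the rewritten form \eqref{eq:FA} one checks that the elimination of $\int_{L_h}\varphi$ dominates the extra cartesian area added in the strip, once the tangent points are close enough to the corners. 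By minimality $(h_1,\psi_1)$ is still a minimizer, so we may assume $h(0)=h(2l)=1$.

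For (ii)--(iii), I would identify the minimizer with a cartesian solution of the associated parametric Plateau problem, via the equivalence announced in Corollary \ref{main_cor} and \eqref{min_problem_plateau}. Consider the closed rectifiable Jordan curve $\Gamma_{\widetilde h}\subset\R^3$ obtained by joining the graph of $\varphi$ over $\partial_D R_{2l}$ with the graph of $0$ over $G_{\widetilde h}$ (closed at the top corners by step (i)), and jointly minimize the Plateau area \eqref{plateau} over parametrizations $X$ and over convex symmetric $\widetilde h\in\Hspace$ with $\widetilde h(0)=\widetilde h(2l)=1$. Existence follows from Lemmas \ref{lem:compactnessofH} and \ref{lem:closednessinB} combined with lower semicontinuity of $\mathcal P_{\Gamma_{\widetilde h}}$ jointly in $(\widetilde h,X)$. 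The classical theory of Section \ref{subsec:Plateau_problem_in_parametric_form} yields a conformal harmonic disc $X_{\widetilde h}$; a Rado-type comparison argument, exploiting convexity of $\widetilde h$ and positivity of $\varphi$, shows $X_{\widetilde h}$ projects injectively onto $SG_{\widetilde h}$ and is therefore the graph of a function $\widetilde\psi\ge 0$ with $\widetilde\psi=0$ on $G_{\widetilde h}$ and $\widetilde\psi=\varphi$ on $\partial_D SG_{\widetilde h}$. Interior analyticity plus classical boundary regularity for minimal surfaces spanning rectifiable curves yields the continuity asserted in (ii). Finally, the identity $\FB(\widetilde h,\widetilde\psi)=\mathcal A(\widetilde\psi,SG_{\widetilde h})$, combined with $\mathcal A(\widetilde\psi,SG_{\widetilde h})\le \FB(h,\psi)$ obtained by feeding the generalized graph $\mathcal G_\psi$ completed with its vertical walls over $G_h$ as a competitor into the Plateau functional for $\Gamma_h$, upgrades $(\widetilde h,\widetilde\psi)$ to a minimizer.

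The hardest part will be the cartesian character of $X_{\widetilde h}$, i.e. the injectivity of its planar projection onto $SG_{\widetilde h}$, because the subgraph of a convex function is not convex, so Rado's theorem does not apply verbatim. By Remark \ref{rem:threshold} cartesianity genuinely fails for $l$ beyond the threshold $l_0$, where the minimizing disc spanning $\Gamma_{\widetilde h}$ degenerates into two flat parallel discs (case (a) of Section \ref{subsec:a_Plateau_problem_for_a_self-intersecting_boundary_space_curve}) and the global minimizer becomes $h\equiv-1$ (excluded by hypothesis here). For $l$ in the non-degenerate regime, the projection argument will need to combine the convexity of $\widetilde h$, the concavity of the profile $\varphi(0,\cdot)$, and a comparison with cylindrical competitors as in Remark \ref{rem:two_explicit_estimates_form_above}, in the spirit of the constrained Plateau analysis of \cite{BMS}, to rule out overhangs of $X_{\widetilde h}$ above $SG_{\widetilde h}$.
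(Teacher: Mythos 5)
Your overall strategy (pass to a parametric Plateau problem, show the minimizing disc is Cartesian, transfer minimality back) is the same as the paper's, but both of your stages have genuine gaps, and the first is actually false as stated. For stage 1: no Cartesian competitor of the type you describe can do as well as the original pair, let alone better. Fix the corner at $w_1=0$, write $c:=h(0)<1$ and let $\delta(s)$ be the horizontal width of your strip at height $s\in(c,1)$. Slicing the generalized graph of $\psi_1$ by the planes $\{w_2=s\}$ and adding the boundary penalties of \eqref{eq:FA} (the Dirichlet penalty on $\{0\}\times(c,1)$ and the trace penalty on the new portion of $G_{h_1}$), the contribution of the strip to $\FB(h_1,\psi_1)$ is at least
\begin{equation*}
\int_c^1\sqrt{\delta(s)^2+\varphi(0,s)^2}\,ds\;>\;\int_c^1\varphi(0,s)\,ds=\int_{L_h\cap\{w_1=0\}}\varphi\,d\Hone
\end{equation*}
whenever $\delta>0$ on a set of positive measure. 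So eliminating the $L_h$ term is strictly more expensive than keeping it, and your surgery only recovers the original value in the degenerate limit $\delta\to 0$. Property (i) cannot be obtained by a local Cartesian competitor; in the paper it falls out at the very end, from the analyticity of the area-minimizing disc (a nonempty $L_{\widetilde h}$ would force a flat vertical piece of $\Sigma^+$, hence, by analyticity, flatness of all of $\Sigma^+$).

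For stage 2, the step you dismiss in one clause --- ``feeding the generalized graph $\mathcal G_\psi$ completed with its vertical walls over $G_h$ as a competitor into the Plateau functional'' --- is precisely the central difficulty of the proof, and it is not known to work directly: since the trace of $\psi$ on $G_h$ may be irregular, there is no proof that this completed surface is of disc type, i.e.\ admits a parametrization from $\overline\unitdisc$ weakly monotone on the boundary. The paper circumvents this by cutting at a level $\bar s<h(l)$ where $\psi$ is strictly positive, mollifying $\widehat\psi$ on enlarged rectangles $K_n$, producing genuinely smooth disc-type graphs bounded by Jordan curves $\Gamma_n$, and passing to the limit in the sense of Frechet; only then does the inequality $\FB(h,\psi)\geq\Htwo(\Sigma^+)+\areaonecod(\psi,\doubledrectangle\setminus K)$ follow. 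Note also that the paper's boundary curve does \emph{not} contain the graph of $0$ over $G_{\widetilde h}$: the free trace $\Gamma_0=\Sigma\cap\{w_3=0\}$ is not prescribed but \emph{discovered}, which is how convexity of $\widetilde h$ and the vanishing of $\widetilde\psi$ on $G_{\widetilde h}$ come out together; your joint minimization over $(\widetilde h, X)$ reintroduces the free-boundary existence problem you were trying to avoid. Finally, you correctly flag Cartesianity as the hard point but do not supply the argument: the paper proves it by doubling $\Sigma$ across $\{w_3=0\}$, showing via a four-point Rado-type count that a vertical plane can be tangent to $\Sigma$ at most once, and observing that a vertical tangency of $\Sigma^+$ would, by the reflection symmetry, produce two tangency points. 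Without the reflection symmetry (absent in your one-sided formulation) this mechanism is unavailable.
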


\begin{proof}
By Remark \ref{rem:h_id_1}, we can assume that $h$ is not
 identically $1$ and, by Lemma \ref{lemma_reg2_bis}, also that 
$h(\axialcoordofcylinder) \geq h(l)>-1$ for any $\axialcoordofcylinder\in 
[0,2\longR]$.
 We start to fix a number $\bar s \in (-1,h(l))$ and
to set
$$
K:=(0,2\longR)\times (\bar s, 1) \subset R_{2\longR}.
$$
The usefulness of $\overline s$ stands on the fact that,
by \eqref{eq:psi_is_positive} and Lemma \ref{lem:attaining}, 
the graph of the restriction of $\psi$ over 
$\partial K\setminus \{w_2=1\}$ is {\it strictly positive}
(in particular, excluding the 
two points $(0,1)$ and $(2\longR,1)$). We shall see
at the end of the arguments, that the proof will be 
independent of the choice of $\overline s$.

Let us extend $\psi$ in  $\R^2 \setminus R_{2\longR}$ as follows:
we define $\extpsi : \R^2 \to [0,1]$, $\extpsi := \psi$ in $R_{2\longR}$, and 
\begin{equation}\label{eq:widehat_psi}
 \extpsi(
\axialcoordofcylinder,\scoord):=\begin{cases}
                \varphi(w_1,\scoord) &\text{if } \axialcoordofcylinder
<0 \text{ or } \axialcoordofcylinder>2\longR, \text{ and }|\scoord|\leq 1,\\
                0&\text{if }|\scoord|>1,
               \end{cases}
\end{equation}
 (see \eqref{eq:varphi}).
In this way  $\extpsi$ is continuous in 
$\R^2\setminus \overline R_{2\longR}$.

Now, we divide the proof
into eight steps.
In step 1 we start by regularizing
$\extpsi$ 
in order that the regularized functions have smooth graphs over the sets $K_n$ 
defined in \eqref{eq:enlarged_rectangle}, and so
these graphs are of 
disc-type. We expect the graph 
of $\extpsi$ over the sets $K_n$, considering
also a possible vertical part over the graph of $h$, to be
a surface of disc-type; however, we miss the proof
of this fact, mainly due to possible irregularity of the trace of $\extpsi$
over $G_h$. The information on the topological type of 
these graphs will be crucial in our proof. 
 
Also, an appropriate approximation of $h$ will be
needed; this latter approximation depends on the
approximation of $\widehat \psi$.
Next (step 2), 
we will compare these graphs with the solution of 
a suitable disc-type Plateau problem.

\medskip

\textit{Step 1, part 1: Approximation of $\extpsi$.} 

Let $n>0$ be a natural number
  (that will be sent to $+\infty$ later) 
such that $\bar s+\frac1n<h(l)$,
and consider the enlarged rectangle
\begin{equation}\label{eq:enlarged_rectangle}
K_n:=\left(-\frac1n,2l+\frac1n\right)\times \left(\bar s, 1+\frac1n\right),
\end{equation}
see Fig. \ref{fig:curves_Frechet}.
Note that 
\begin{equation}\label{eq:widehat_psi_is_continuous_on_partial_K_n}
\extpsi {\rm ~ is~ continuous~
on~ } \partial K_n.
\end{equation}
Given $n \in \mathbb N$, we claim that 
we can build a sequence 
$(\psi_k^n)_{k\in \mathbb N}$ depending on $n$,
which satisfies the following properties:
\begin{equation}\label{eq:properties_of_psi_n}
\begin{aligned}
&\psi_k^n \in C^\infty(K_n,[0,1]) \cap 
C(\overline K_n,[0,1]) \qquad \forall k \in \NN,~ k>0,
\\
 &\psi_k^n=\extpsi \text{ on }\partial K_n\qquad\forall k\in \NN, ~ k>0,
\\
 &\psi^n_k\rightharpoonup \extpsi \text{ weakly}^\star \text{ in }BV(K_n)
\text{ as } k\rightarrow +\infty,
\\
& \int_{K_n}|\grad \psi^n_k|~dw \rightarrow |D\extpsi|(K_n)
\text{ as } k\rightarrow +\infty.
\end{aligned}
\end{equation}
In order to obtain
\eqref{eq:properties_of_psi_n}
we use standard arguments 
(details can be found in \cite[Thm. 3.9]{AmFuPa:00} or \cite[Thm. 1, Section 4.1.1]{GiMoSu:98}). 
To the aim of our discussion, we just recall that we proceed by constructing an 
increasing sequence 
$(U_\ivirgolan
)_{i\geq 1}$ of open
subsets of $K_n$,
 $U_\ivirgolan \subset \subset U_\ipiuunovirgolan
\subset \subset K_n$, $\cup_i U_\ivirgolan
= K_n$ (for 
$i \geq 1$ we take
$U_\ivirgolan:=\{x\in\R^2:\textrm{dist}(x,\R^2\setminus K_n)>
\frac{1}{i+n}\}$
for definitiveness) and with the aid of a
partition  
of unity
$(\eta_\ivirgolan)$ 
associated to $V_\unovirgolan:=U_\duevirgolan$, 
$V_\ivirgolan
:=U_\ipiuunovirgolan
\setminus \overline U_\imenounovirgolan$ for $i\geq2$, 
we 
mollify $\widehat \psi$ accordingly in $V_\ivirgolan$.
For our purpose we 
choose\footnote{
We need the full set $\overline V_\ivirgolan$ as support 
in order that the argument to detect the behaviour of $h_n$ 
(defined in \eqref{eq:def_h_n}) in 
$[-\frac1n,0]$ applies.} 
$\eta_\ivirgolan$ in such a way that 
\begin{align}\label{support_eta_i}
\supp(\eta_\ivirgolan)=\overline V_\ivirgolan.
\end{align}
Since $\psi_k^n$ is obtained by mollification
 we have $\psi_k^n\in 
C^\infty(K_n)$ and moreover 
$\psi^n_k\in  C(\overline K_n)$ 
because it attains the continuous boundary datum $\extpsi$
on $\partial K_n$.
Here we use the same standard 
mollifier $\rho\in C_c^\infty (\unitdisc)$ in each $V_\ivirgolan$, choosing,
for $w = (w_1,w_2)$, $\rho_{\ivirgolan,k}(w):=\rho(w/r_{\ivirgolan,k})$ with  
$r_{\ivirgolan,k}:=r_\ivirgolan/k>0$,  $r_\ivirgolan$ decreasing with 
respect to $i\geq1$, with
$r_\ivirgolan\rightarrow 0^+$ as $i\rightarrow+\infty$;
we take 
\begin{equation}\label{eq:choice_of_radius}
r_\ivirgolan=\frac{1}{i+2+n}
\end{equation}
for definiteness.    
Finally, $[0,2l]\times[\bar s+\frac1n,1]\subset U_\unovirgolan\subset V_\unovirgolan$, 
and $V_\ivirgolan\cap \left(
[0,2l]\times[\bar s+\frac1n,1]\right)=\emptyset$ for $i \geq 2$.
It follows 
\begin{equation}\label{mollification_in_K}
\psi_k^n=\widehat\psi\star \rho_{\unovirgolan,k}\qquad \text{ in }[0,2l]\times\Big[
\bar s+\frac1n,1\Big] \qquad \forall
n \in \mathbb N. 
\end{equation}
Using \cite[Prop. 3 Sec. 4.2.4 pag. 408, and Th. 1 Sec. 4.1.5 pag. 331]{GiMoSu:98} 
we infer
\begin{align}\label{14.34}
 \mathcal A(\psi_k^n,K_n)\rightarrow \areaonecod(\extpsi, K_n)
\qquad \text{ as } k\rightarrow +\infty. 
\end{align}
Now that properties \eqref{eq:properties_of_psi_n} are achieved, 
by a diagonal argument we select 
functions 
\begin{equation}\label{eq:psi_n}
\psi_{n}:=\psi_{k_n}^n\in (\psi^n_k) \qquad \forall n \in \NN,
\end{equation}
such that
\begin{equation}\label{eq:diag}
\begin{aligned}
 &\psi_n=\extpsi \text{ on }\partial K_n \qquad \forall n\in\mathbb N,
\\
 &\psi_n\rightharpoonup \extpsi \text{ weakly}^* \text{ in }BV(K)
\text{ as } n\rightarrow +\infty,\\
& \int_{K_n}|\grad \psi_n|~dw\rightarrow |D\extpsi|(\overline{K})
\text{ as } n\rightarrow +\infty,
\end{aligned}
\end{equation}
where $\overline K$ is the closed rectangle $\overline K := 
\cap_n K_n$.
On the basis of \eqref{14.34} and  \eqref{eq:diag}, 
we can also ensure\footnote{To prove claim \eqref{conv_areas_Kn}, 
fix $m\in \mathbb N$, and set 
$\tilde \psi_n:=\extpsi$ outside $K_n$ and $\tilde\psi_n = \psi_n$
in $K_n$, so 
that 
\begin{align*}
&\tilde \psi_n\rightharpoonup \extpsi \text{ weakly}^* \text{ in }BV(K_m)
\text{ as } n\rightarrow + \infty,
\\
& |\grad \tilde \psi_n|(K_m)\rightarrow|D\extpsi|(K_m)= |D\extpsi|(\overline{K})+|D\extpsi|(K_m\setminus \overline{K})
\text{ as } n\rightarrow + \infty.
\end{align*}
Then   
$\limsup_{n\rightarrow+\infty} \areaonecod(\psi_n, K_n)\leq 
\limsup_{n\rightarrow+\infty} \areaonecod(\tilde \psi_n, K_m)= \areaonecod(\extpsi, K_m)
= \areaonecod(\extpsi, \overline{K})+ \areaonecod(\extpsi, K_m\setminus \overline{K})$,
the first equality following from the strict convergence 
of $\tilde \psi_n$ to $\widehat \psi$
\cite[Prop. 3 Sec. 4.2.4 pag. 408 and 
Thm. 1 Sec. 4.1.5 pag. 371]{GiMoSu:98}. 
Taking the limit as $m\rightarrow +\infty$, 
since 
$\widehat \psi \in W^{1,1}(K_m\setminus \overline{K})$
we conclude
$\limsup_{n\rightarrow+\infty} \areaonecod
(\psi_n, K_n)\leq \areaonecod(\psi, \overline{K})$.
Then 
\eqref{conv_areas_Kn} follows by 
lower semicontinuity.} 
 that
\begin{align}\label{conv_areas_Kn}
 \mathcal A
(\psi_n, K_n)\rightarrow \areaonecod(\extpsi, \overline K) \qquad{\rm 
as}~ n \to +\infty.
\end{align}
Here, by $ \areaonecod(\extpsi, \overline K)$ we mean the 
area of the graph of $\extpsi$ relative to $\overline K$
which, recalling also 
Proposition \ref{prop:reg},
 reads as
\begin{equation}\label{eq:area_closure}
\areaonecod(\extpsi, \overline K)=\areaonecod(\extpsi, K)+
\int_{\{0\}\times(\bar s,1)}|\extpsi^--\varphi|~d\mathcal H^1+
\int_{\{2l\}\times(\bar s,1)}|\extpsi^--\varphi|~d\mathcal H^1, 
\end{equation}
where $\extpsi^-$ denotes 
the trace of $\extpsi$ on $\partial K$. This concludes the proof 
of the first part of step 1.

Before passing to the second part, for any $n \in \NN$
we define
$$
\widehat h(\axialcoordofcylinder):=
\sup\left\{\scoord\in \Big(\bar s,1+\frac1n\Big):
\widehat \psi(\axialcoordofcylinder,\scoord)>0\right\}
\qquad \forall \axialcoordofcylinder
\in \Big(-\frac1n,2l+\frac1n\Big).
$$
Notice that 
\begin{equation*}
\begin{aligned}
& \widehat h = h \quad{\rm in}~ [0,2l],
\\
& \widehat h = 1 \quad{\rm in}~
(-1/n,0) \cup (2l, 2l + 1/n).
\end{aligned}
\end{equation*}

\medskip

\textit{Step 1, part 2: Approximation of $h$.} 
We construct functions
$h_n: (-\frac1n,2l+\frac1n) \to (\bar s, 1+\frac1n)$ 
such that
\begin{equation}\label{eq:constr_h_n}
\begin{aligned}
& h_n(\cdot) = h_n(2\longR - \cdot),
\\
&  \psi_n=0 ~{\rm in}~ K_n \setminus SG_{h_n},
\\
& h_n \in BV\big(\big(-\frac{1}{n}, \longR\big) \big),
\\
\end{aligned}
\end{equation}
and, setting
\begin{align}\label{topology_Kn+}
SG_{h_n,\overline s}:=\left\{(\axialcoordofcylinder,\scoord):
\axialcoordofcylinder\in \left(-\frac1n,2l+\frac1n\right),\; \scoord
\in (\bar s,h_n(\axialcoordofcylinder))\right\},
\end{align}
also such that
\begin{equation*}
\begin{aligned}
& \lim_{n \to +\infty} \mathcal H^2(SG_{h_n, \overline s})
=\mathcal H^2(K \cap \Omegah),
\\
& \lim_{n \to +\infty}\areaonecod(\psi_n,  SG_{h_n, \overline s}) = 
\mathcal F_{2l}(h,\psi)-\areaonecod(\psi, \doubledrectangle\setminus K).
\end{aligned}
\end{equation*}

To this aim, for any $n\in \mathbb N$ 
we define
\begin{equation}\label{eq:def_h_n}
 h_n(\axialcoordofcylinder):=
\sup\left\{\scoord\in \Big(\bar s,1+\frac1n\Big):
\psi_n(\axialcoordofcylinder,\scoord)>0\right\}
\qquad \forall \axialcoordofcylinder
\in \Big(-\frac1n,2l+\frac1n\Big),
\end{equation}
Since (see \eqref{eq:psi_is_positive}
of Proposition \ref{prop:reg} and \eqref{eq:widehat_psi}) 
$\extpsi$ is positive in $\Omegah\cup((-\frac1n,0)\times (\bar s,1))\cup((2l,2l+\frac1n)\times (\bar s,1))$ 
it turns out, recalling also that the function 
$\psi_n$ in \eqref{eq:psi_n} is obtained by mollification, that

\begin{equation}\label{eq:lines}
\begin{aligned}
 & -1 < h(\axialcoordofcylinder)<h_n(\axialcoordofcylinder)<1+\frac1n \qquad 
\forall \axialcoordofcylinder\in (0,2l),
\\
 &1<h_n(\axialcoordofcylinder)<1+\frac1n \qquad \forall
\axialcoordofcylinder\in \Big(-\frac1n,0\Big]\cup\Big[2l,2l+\frac1n\Big).
\end{aligned}
\end{equation}
The validity of \eqref{eq:lines} is due to the fact that 
$\psi$ is positive in the subgraph of 
$\widehat h$ and vanishes on the epigraph of $\widehat h$. Therefore,
when mollifying $\psi$, the positivity set
must increase (and the mollified function must vanish 
at points at distance from the subgraph of $\widehat h$ of the order
of the mollification radius. Thus $h_n > h$;
also 
$h_n < h+\frac1n$ due to our choice of $r_{i,n}$ in \eqref{eq:choice_of_radius},
since the mollification radius is smaller than
$\frac1n$. 

Moreover, again the positivity of $\extpsi$ implies
that 
\begin{align}\label{topology_Kn+_bis}
\psi_n>0 \ \  {\rm ~in}~
 \ \ SG_{h_n,\overline s} \subset K_n,
\end{align}
whereas
\begin{align}\label{eq:psi_n_zero}
 \psi_n(\axialcoordofcylinder, \scoord)=0 \qquad \text{if }
\tcoord \in \Big(-\frac{1}{n}, 2 \longR + \frac{1}{n}\Big), 
~\scoord\in \Big[h_n(\axialcoordofcylinder),1+\frac1n\Big),
\end{align}
because $\extpsi(\axialcoordofcylinder,
\scoord)=0$ 
if $\axialcoordofcylinder\in[0,2l]$, $\scoord>h(\axialcoordofcylinder)$ and if $\scoord>1$.
Exploiting \eqref{mollification_in_K}, and the fact that $h$ is 
nonincreasing 
(resp. nondecreasing)  in $[0,\longR]$ (resp. in $[l,2l]$), one checks\footnote{
Let us show for instance that $h_n$ is decreasing in 
$[0,\longR]$.
Recall that the function $\widehat \psi$ vanishes above the graph of $h$, 
which is decreasing in $[0,\longR]$. Now, take a point $(w_1,w_2)\in K_n$, $w_1 
\in [0,\longR)$,  $w_2 > h(w_1)$; suppose first that $w_1 \geq r_\unovirgolan$.
If ${\rm dist}((w_1,w_2), {\rm graph}(h))
> r_\unovirgolan$, then $\psi_n(w_1,w_2)
=\widehat \psi\star \rho_\unovirgolan(w_1,w_2)
=0$, and  
if ${\rm dist}((w_1,w_2), {\rm graph}(h))
< r_\unovirgolan$, then $\psi_n(w_1,w_2)=\widehat \psi\star \rho_\unovirgolan(w_1,w_2)
>0$.
Hence,  if $\widehat \psi\star \rho_\unovirgolan(w_1,w_2)=0$
then also $\widehat \psi\star \rho_\unovirgolan(w_1+\eps,w_2)=0$ 
for $\eps>0$ small enough, because 
${\rm dist}((w_1+\eps,w_2), {\rm graph}(h)) >
{\rm dist}((w_1,w_2), {\rm graph}(h))$, 
being $h$ decreasing in $[0,\longR]$.
This argument applies also when $w_1 \in [0,r_\unovirgolan)$
by \eqref{mollification_in_K}, since 
$\overline h$ is nonincreasing also in $(-1/n, l)$.
}
that also $h_n$ 
is nonincreasing in  $[0,\longR]$
(resp. nondecreasing in $[l,2l]$). 
Concerning the behaviour of $h_n$ in $(-\frac1n,0]$ (and similarly in $[2l,2l+\frac1n)$), we see that in $V_\ivirgolan$ ($i>1$), we are mollifying 
with $\rho_{\ivirgolan,k_n}$ 
whose radius of mollification is $r_\ivirgolan/k_n$, so that 
$\widehat \psi\star \rho_{\ivirgolan,k_n}$  equals $0$ 
on the line $\{w_2=1+\frac{r_\ivirgolan}{k_n}\}$, and nonzero below inside $K_n$:
this follows from the fact that $\widehat \psi$ is  $0$
 on the line $\{w_2=1\} \cap K$ and nonzero below.
We have defined the radii $r_\ivirgolan$ 
to be decreasing 
with respect to $i$, so that, $\psi_n$ being the 
sum
of $\widehat \psi\star \rho_{\ivirgolan,k_n}$  (whose support is 
${\overline V}_\ivirgolan$ by \eqref{support_eta_i}), 
it turns out that $\psi_n$ is $0$ on $\{w_2=1+\frac{r_\ivirgolan}{k_n}\}$ 
and nonzero below\footnote{Notice that in $V_\ivirgolan
\setminus V_\imenounovirgolan$ only $\widehat \psi\star \rho_{\ivirgolan,k_n}$ and 
$\widehat \psi\star \rho_{\ipiuunovirgolan,k_n}$, are nonzero (from this it follows that $h_n=1+\frac{r_\ivirgolan}{k_n}$ 
in $(-\frac1n+\frac{1}{i+n+1},-\frac1n+\frac{1}{i+n}]$). }
in $V_\ivirgolan\setminus V_\imenounovirgolan$. 
As a consequence, $h_n$ is nondecreasing\footnote{ 
		Precisely,  $h_n$ is piecewise constant and nondecreasing 
	in $(-\frac1n,0]$, but these two properties are not 
needed in the proof.
} 
	 in $(-\frac1n,0)$. 
In particular
\begin{equation}\label{h_n_BV}
h_n\in BV\Big((-\frac1n,2l+\frac1n)\Big). 
\end{equation}
Finally, it is not difficult to see that the functions $h_n$ 
converge 
to $h$ in $L^1((0,2l))$ 
as $n\rightarrow \infty$, 
and
\begin{align}
\lim_{n \to +\infty}
 \mathcal H^2(SG_{h_n,\overline s}) = H^2(K \cap 
\Omegah).
\label{eq:25}
\end{align}
From this, \eqref{conv_areas_Kn}, Lemma \ref{lem:attaining}, \eqref{eq:area_closure} and
\eqref{eq:F_2l}
we deduce
\begin{align}\label{convergenza_plateau}
\areaonecod(\psi_n,  SG_{h_n,\overline s})=
 \areaonecod(\psi_n, K_n)-\mathcal H^2(K_n\setminus SG_{h_n, \overline s})
\rightarrow\mathcal F_{2l}(h,\psi)-\areaonecod(\psi, \doubledrectangle\setminus K).  
\end{align}

\textit{Step 2: The curves $\Gamma_n$, and 
the surfaces $\Sigma_n$ and $\mathcal G_{\psi_n}$. Comparison with a Plateau problem.} 

In this step we 
compare the graph of $\psi_n$ over $K_n$
with the solution of a
 disc-type Plateau problem. In particular we will 
obtain a disc-type surface $\dtsnp$ 
whose area is smaller than 
or equal to the area of the graph of $\psi_n$,
see
\eqref{key_ineq_D+}.
 In step 3 (see \eqref{ineq_withF}) we will compare this surface with the graph of $\psi$ on $K$. 

We recall that $\psi_n$ is continuous in 
$\overline K_n$, it is positive on the bottom edge 
$[-\frac1n,2l+\frac1n]\times \{\bar s\}$
of $K_n$ (see \eqref{eq:diag}), 
it is zero on the top edge $[-\frac1n,2l+\frac1n]\times \{1+\frac1n\}$
by \eqref{eq:lines}, and on the lateral edges of $K_n$ it coincides with $\widehat \psi$; more specifically
\begin{equation*}
\begin{aligned}
 &\psi_n\Big(-\frac1n,\scoord\Big)=\psi_n\Big(2l+\frac1n,\scoord\Big)
=\varphi(0,\scoord)>0\qquad \text{for } \scoord\in [\bar s,1),\nonumber\\
 &\psi_n\Big(-\frac1n,\scoord\Big)
=\psi_n\Big(2l+\frac1n,\scoord\Big)=0\qquad \text{for }\scoord\in \Big[1,1+\frac1n\Big).
\end{aligned}
\end{equation*}
Define
$$\partial_DK_n:=\Big(\Big[-\frac1n,2l+\frac1n\Big]
\times \{\bar s\}\Big)\cup\Big(\Big\{-\frac1n,2l+\frac1n\Big\}\times [\bar s,1]\Big).
$$
{}From \eqref{eq:diag}, we see that $\psi_n$ coincides with 
$\extpsi$ over  $\partial_DK_n$, and its graph over this set is a 
curve,
that we 
denote by $\Gamma_n^+$. This curve, excluding its endpoints 
$P_n=(-\frac1n,1,0)$ and $Q_n=(2l+\frac1n,1,0)$, 
is contained in the half-space 
$\{w_3>0\}$, while 
$P_n, Q_n \in \{w_3=0\}$.
We further denote by $\Gamma_n^-$ the
 symmetric of $\Gamma_n^+$ with respect to the plane $\{w_3=0\}$, 
so that 
$$
\Gamma_n:=\Gamma_n^+\cup\Gamma_n^-
$$
is a Jordan curve in $\R^3$, see
Fig. \ref{fig:curves_Frechet}.
Thus we can solve the disc-type
Plateau problem with boundary $\Gamma_n$ \cite{Hil1} and 
call $\dtsn \subset \R^3$ one of its  
solutions\footnote{$\dtsn$ is the image of an area-minimizing
map from the unit disc into $\R^3$.}. 
In addition, we may assume that $\dtsn$ is symmetric with respect to 
the plane $\{w_3=0\}$ and that 
\begin{align*}
 \mathcal H^2(\dtsnp)=\mathcal H^2(\dtsnm),
\end{align*}
with $\dtsn^\pm:=\dtsn\cap \{w_3\gtrless0\}$, respectively (see Fig. \ref{fig:curves_Frechet}).

\begin{figure}
	\begin{center}
		\includegraphics[width=0.8\textwidth]{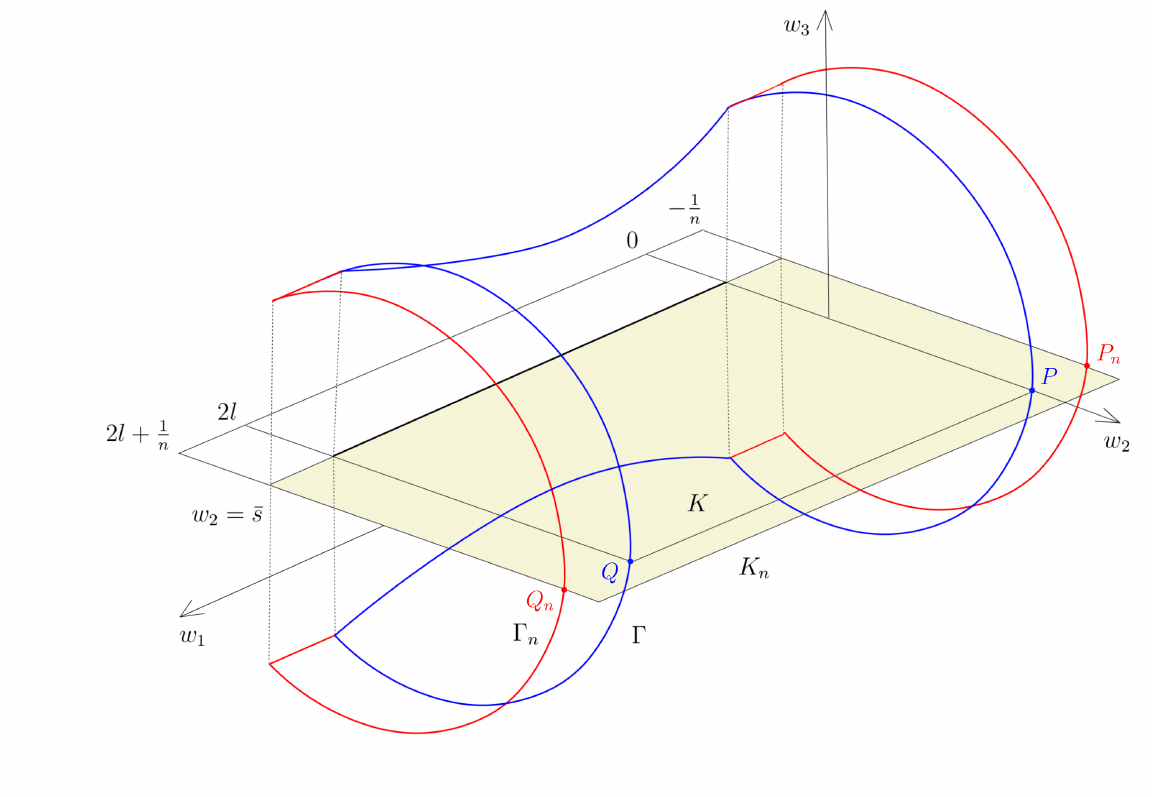}
		\caption{The rectangle $K_n$ 
in \eqref{eq:enlarged_rectangle} in dark, 
and the rectangle $K$ inside. $\Gamma$ is the
curve passing through $Q$ and $P$, the curves $\Gamma_n$
(which pass through $Q_n$ and $P_n$) 
approach $\Gamma$ ($\Gamma$ and $\Gamma_n$ coincide and overlap on the graph of $\psi$ over the bold segment $\{w_2=\bar s\}\cap K$). 
		}
		\label{fig:curves_Frechet}
	\end{center}
\end{figure}

Now, we want to compare the area of the graph
of $\psi_n$ in 
$SG_{h_n, \overline s}$ with 
$\mathcal H^2(\dtsnp)$. 
To this aim we start by observing that $\psi_n$, 
being smooth in $K_n$ and continuous in $\overline K_n$ (see \eqref{eq:properties_of_psi_n}), is such that 
its graph over $SG_{h_n, \overline s}$ has the topology of 
$SG_{h_n, \overline s}$, 
that is the topology of the 
disc.
Indeed, 
$SG_{h_n, \overline s}$ is
bounded by construction, and it is open
from 
\eqref{topology_Kn+_bis}, \eqref{eq:psi_n_zero}. 
In addition, it is connected and simply connected. Indeed,
take any continuous curve $\gamma:S^1\rightarrow SG_{h_n, \overline s}$.
Using \eqref{eq:lines}, 
let $\widehat s\in (\bar s, 1)$ 
be such that $\{w_2=\widehat s\}\cap K_n\subset SG_{h_n, \overline s}$; hence we can 
(vertically) contract $\gamma$ 
continuously to its projection on the line $\{w_2=\widehat s\}$, 
and then contract it continuously to the middle point of $\{w_2=\widehat s\}\cap K_n$, showing that $\gamma$ is homotopic to the constant curve.
 Hence,
by the Riemann mapping theorem, $SG_{h_n, \overline s}$ is biholomorphic to the 
open unit disc, and 
$\overline SG_{h_n, \overline s}$
is homeomorphic to the closure of the disc, thanks to the fact that $\partial SG_{h_n, \overline s}$ is a Jordan curve, due to the 
BV-regularity of $h_n$ (see \eqref{h_n_BV}). 

Denoting by $\mathcal G_{\psi_n}^+$ the graph of $\psi_n$ over $SG_{h_n, \overline s}$, 
we consider the graph $\mathcal G_{\psi_n}^-$  of $-\psi_n$ over $SG_{h_n, \overline s}$, 
and observe that the closure of 
$\mathcal G_{\psi_n}^+\cup \mathcal G_{\psi_n}^-$ is a disc-type surface with boundary $\Gamma_n$. Therefore, 
by minimality,
\begin{align}
\label{key_ineq_D+}
 \areaonecod(\psi_n, SG_{h_n, \overline s})=\mathcal H^2(\mathcal G_{\psi_n}^+)\geq \mathcal H^2(\dtsnp).
\end{align}

\textit{Step 3: Passing to the limit 
as $n\rightarrow+\infty$: the curve $\Gamma$ and the surface $\Sigma$.}

The graph of $\psi$ over the segment $[0,2l]\times \{\bar s\}$ 
and the graph of $\varphi$ over the two segments 
$\{0,2l\}\times [\bar s,1]$ form a simple continuous curve $\Gamma^+$ which,
excluding the two endpoints 
\begin{equation}\label{eq:P_Q}
P=(0,1,0), \qquad Q=(2l,1,0),
\end{equation}
is contained in the half-space $\{w_3>0\}$, while
$P,Q \in \{w_3=0\}$ (see
Fig. \ref{fig:curves_Frechet}).
If we consider
$$
\Gamma:=\Gamma^+\cup\Gamma^-,
$$
with $\Gamma^-$ the symmetric of $\Gamma^+$ with respect to 
the plane $\{w_3=0\}$, a direct check shows that the curves 
$\Gamma_n$ converge to the curve $\Gamma$
in the sense of 
Frechet \cite{Nitsche:89}, as $n\rightarrow +\infty$. 
As a consequence, the area-minimizing 
disc-type surfaces $\dtsn$ defined in step 2 satisfy $\mathcal H^2 (\dtsn)\rightarrow\mathcal H^2(\dts)$ (see \cite[Paragraphs 301, 305]{Nitsche:89}), with 
$\Sigma$ a disc-type area-minimizing surface 
spanned by $\Gamma$. It follows
\begin{equation}\label{eq:convergence_of_areas_Sigma_n}
 \mathcal H^2(\dtsnp)
\rightarrow \mathcal H^2(\dtsp) \qquad {\rm as}~ n \to +\infty,
\end{equation}
where $\dtsp:=\dts\cap \{w_3>0\}$.  
{}From \eqref{eq:convergence_of_areas_Sigma_n}, 
\eqref{key_ineq_D+}, \eqref{convergenza_plateau}
we deduce
$$
\begin{aligned}
\mathcal H^2(\dtsp)  = &
\lim_{n \to +\infty} 
\mathcal H^2(\dtsnp)
\leq 
\lim_{n \to +\infty} 
 \areaonecod(\psi_n, SG_{h_n, \overline s})
\\
=&
\lim_{n \to +\infty}\left(
 \areaonecod(\psi_n, K_n)-\mathcal H^2(K_n\setminus SG_{h_n, \overline s})\right)
=
\mathcal F_{2l}(h,\psi)-\areaonecod(\psi, \doubledrectangle\setminus K).  
\end{aligned}
$$ 
Since $\psi_n = \psi$ on $R_{2\longR} \setminus K$, we 
get 
\begin{align}
\label{ineq_withF}
\lim_{n \to +\infty} \left(
\areaonecod(\psi_n, SG_{h_n, \overline s})+\areaonecod(\psi, \doubledrectangle\setminus K)
\right)=  \FB(h,\psi)\geq \mathcal H^2(\dtsp)+ \areaonecod(\psi, \doubledrectangle\setminus K).  
\end{align}

Let $\Phi=(\Phi_1,\Phi_2,\Phi_3):\overline \unitdisc
\subset \R^2 
\rightarrow \dts \subset \R^3$ be an
analytic and conformal 
 parametrization of $\Sigma$
in the open unit disc
$\unitdisc$, continuous up to 
$\partial \unitdisc$,  with $\Phi(\partial \unitdisc)=\Gamma$. 
Exploiting the results in \cite{Meeks_Yau:82} (see also \cite[pag. 343]{Hil1}) 
we know that %
\begin{equation}
\label{eq:Phi_is_an_embedding}
\Phi {\rm~ is~ an~ embedding}, 
\end{equation}
since $\Gamma$ is a simple curve on
 the boundary of the convex set $K\times \R$.

\medskip
Now, we need to prove several qualitative properties of $\Sigma$:
this will be achieved in steps 4,5 and 6.
\smallskip

\textit{Step 4: 
$\dts \cap \{w_3=0\}$ is a simple curve $\Gamma_0$  
connecting the two points $P$ and $Q$ in \eqref{eq:P_Q}.}

This can be seen as follows:
Assume $\Phi(p_0)=P$ and $\Phi(q_0)=Q$ for two distinct points $p_0,q_0\in \partial \unitdisc$. 
By standard arguments\footnote{See also step 5 where a similar statement is proved.}, 
the open unit disc $\unitdisc$ is splitted into two 
connected components $\{x\in \unitdisc:\Phi_3(x)\geq 0\}$ and $\{x\in \unitdisc:\Phi_3(x)<0\}$ and the set 
$\{\Phi_3=0\}$ must be a simple curve in $\unitdisc$ connecting $p_0$ and $q_0$  (here we use that the points $p_0$ and $q_0$ 
are, by the definition of $\Gamma$ and the properties of $\Phi$, the 
unique points on $\partial \unitdisc$ where $\Phi_3=0$ and that the 
two relatively open arcs on $\partial \unitdisc$ with extreme points $p_0$ and $q_0$ are mapped in $\{w_3>0\}$ and $\{w_3<0\}$ respectively).
By the injectivity of $\Phi$ (property
\eqref{eq:Phi_is_an_embedding}) we conclude that 
\begin{equation}\label{eq:Gamma_0}
\Gamma_0:=\Phi(\{\Phi_3=0\})
\end{equation}
is a simple curve connecting $P$ and $Q$ on the plane $\{w_3=0\}$, 
and more specifically $\Gamma_0 \subset K$. 

\medskip

\medskip

In the next two steps $5$ and $6$ we define
the functions $\widetilde h$ and $\widetilde \psi$ which appear 
in the statement of the theorem.
In step $5$  we show that, due to the particular
shape of $\Gamma$, the surface $\Sigma$ admits a semicartesian
parametrization \cite{BePaTe:16}, namely that if we slice
$\Sigma$ with a plane orthogonal to the first coordinate $w_1
\in (0,2l)$ then the intersection is a curve connecting
the two corresponding points on $\Gamma$; in addition, in this
present case, 
this curve turns out to be simple. We will also show 
that the free part $\Gamma_0$
of $\Sigma$
  leaves a trace on $\doubledrectangle$
which is the graph of a convex function $\widetilde h$ (of one variable). 

\medskip

\textit{Step 5:
The projection $\projthree(\Sigma)$ of $\Sigma$ on 
the plane $\{w_3=0\}$ is the subgraph of a  
function $\widetilde h\in 
\mathcal H_{2l}$,
where we recall that 
$\mathcal H_{2l}$ is defined in \eqref{eq:space_of_h_in_the_doubled_interval}. In particular, $h$ 
is convex.} 

We first show that $\projthree(\Sigma)$ is the subgraph of a 
function $\widetilde h$, and then we prove that 
$\widetilde h\in \mathcal H_{2l}$. 
 Take a point $W = (W_1,W_2,W_3)\in \Sigma\setminus \Gamma$,
$W_1 \in (0, 2l)$; 
by the strong maximum principle, $\projthree(W) \notin \partial K$:
this follows since points in $\Sigma\setminus \Gamma$ are in the interior of the convex envelope of $\Gamma$, see \cite{Hil1}. 
Consider the unique point $x\in \unitdisc$ 
such that $\Phi(x)=W$. 
Due to the particular structure of $\Gamma$,
one checks that 
 $\partial \unitdisc=\Phi^{-1}(\Gamma)$ splits into two 
connected components, $\Phi_1^{-1}((W_1,2l])\cap \partial \unitdisc$ 
and $\Phi_1^{-1}([0,W_1])\cap \partial \unitdisc$,
 since $\Phi_1^{-1}(\{W_1\})\cap \partial \unitdisc$ consists
of two distinct points $q_1$, $q_2$ in $\partial \unitdisc$. 
In particular, the continuous function $\Phi_1(\cdot)-W_1$ changes 
sign only twice on $\partial \unitdisc$, 
namely in correspondence of $q_1$ and $q_2$. 
From Rado's lemma \cite[Lemma 2, pag. 295]{Hil1} 
it follows that there are no points on $\Sigma \cap\{w_1=W_1\}$ 
where the two area-minimizing surfaces $\Sigma$ and the plane $\{w_1=W_1\}$ 
are tangent to each other\footnote{If $\pointP$ is a tangency point, 
then the differential of $\Phi_1$ must vanish at $\Phi^{-1}(\pointP)\in \unitdisc$.}.  
It follows that, if $\pointP\in (\Sigma\setminus \Gamma)\cap \{w_1=W_1\}$, 
then the set $(\Sigma\setminus \Gamma)\cap \{w_1=W_1\}$ is, 
in a neighbourhood of $\pointP$, an analytic curve, 
see again 
\cite[Lemma 2, pag. 295]{Hil1}. 
Hence, $\{\Phi_1=W_1\}\cap \unitdisc$ is, 
in a neighbourhood of $\Phi^{-1}(\pointP)$, an analytic curve.
If $\gamma_I:I\rightarrow \unitdisc$ 
is a parametrization of this curve,
$I=(a,b)$ a bounded open interval, 
we see that the limits as 
$t\rightarrow a^+$ and $t \to b^-$
of 
$\gamma_I(t)$ 
exist\footnote{$\overline \unitdisc$ is compact, 
hence $\gamma_I(t)$ has some accumulation point
as $t\rightarrow a^+$. Notice that $I$ and $\gamma_I(I)$ 
are homeomorphic by contruction; in turn $\gamma_I(I)$ is 
homeomorphic to the analytic curve 
$\Phi\circ\gamma_I(I)$.
Assume $x$ is an accumulation point for $\gamma_I(t)$ as $t\rightarrow a^+$. If $x\in \unitdisc$, there is a neighborhood $U$ of $x$ such that $\sigma:=\Phi(U)\cap \{w_1=W_1\}$ is an analytic curve. Then $\gamma_I$, in a right neighbourhood $J$ of $a$, is homeomorphic to the analytic curve $\Phi\circ\gamma_{I}(J)\in \R^3$ emanating from $\Phi(x)$, which in turn is the restriction of $\sigma$. In particular  $\gamma_I(I)$ is a curve emanating from $x$ and the limit as $t\rightarrow a^+$ of $\gamma_I(t)$ is $x$.
If instead $x\in \partial \unitdisc$ then $x$ must be the unique accumulation point. 
Indeed, 
$\lim_{t\rightarrow a^+}\Phi_1\circ\gamma_I(t)=W_1$, 
and then $x=q_1$ or $x=q_2$, say $x=q_1$.
Assume there is 
another accumulation point $y$ as $t\rightarrow a^+$; then $y\notin \unitdisc$, otherwise we fall in the previous case, 
and therefore necessarily $y=q_2$. But in this case, we 
see that there must be another accumulation point $z\in \unitdisc$ 
(as $t\rightarrow a^+$, we move between a neighbourhood $U$ of $x$ 
and a neighbourhood $V$ of $y$ frequently, 
so that there should be some 
other accumulation point in $\overline \unitdisc\setminus (U\cup V)$)
leading us to the previous case again.} 
and belong to $ \overline {\unitdisc}$. If 
$\lim_{t \to a^+} \gamma_I(t)$ belongs to
$\partial \unitdisc$, 
it must be either $q_1$ or $q_2$; if instead it is in $\unitdisc$, 
then we can always extend $\gamma_I$ in a neighbourhood of $a$ 
and find a larger interval $J\supset I$ on which $\gamma_I$ can be extended. 
A similar argument applies 
for $\lim_{t \to b^-} \gamma_I(t)$.
Let now 
$I_m=(a_m,b_m)$ be a maximal interval on which $\gamma_I$ is defined, 
so that, by maximality, the limits as $t \to a_m^+$ and $t \to b_m^-$ 
are $q_1$ and $q_2$, respectively.
We can then consider the closure $\overline I_m$ of $I_m$ 
and we have that $\gamma_{\overline I_m}(\overline I_m)$ 
is a curve in $\overline \unitdisc$ joining $q_1$ and $q_2$.
Thus we have proved that $\sigma_W:=\Sigma\cap \{w_1=W_1\}$ equals
$\Phi(\gamma_{\overline I_m}(\overline I_m))$.
In particular $\sigma_{W}$ is a curve in $\R^3$ contained 
in the plane $\{w_1 = W_1\}$
and connecting the points $\Phi(q_1)\in \Gamma$ 
and  $\Phi(q_2)\in \Gamma$. But we know that 
$\projthree(\Phi(q_1))=\projthree(\Phi(q_2))=
(W_1,\bar s,0)$, 
so $\projthree(\sigma_W)$ is a segment in $\doubledrectangle$  
with endpoints $(W_1,\bar s,0)$ and 
$(W_1,s^+,0)$ for some $s^+>\bar s$, and $s^+\geq W_2$. 
 In particular the whole 
segment ``below'' $\projthree(W)$, 
namely the one with endpoints $(W_1,\bar s,0)$ and $(W_1,W_2,0)$, belongs
 to $\projthree(\Sigma)$, and $\projthree(\Sigma)$ is then the subgraph of some function $\widetilde h$. 
As a remark, due to the symmetry of the curve $\Gamma$, we can 
assume $\widetilde h$ is symmetric with respect to $\{w_1=l\}$, namely $\widetilde h (\cdot)=\widetilde h (2l-\cdot)$.
 
 Now we show that $\widetilde h$ is convex. 
Assume it is not, and 
take two points $(t_1,\widetilde h(t_1),0), (t_2,\widetilde h(t_2),0)\in 
\doubledrectangle$,  $t_1<t_2$,
and a third point $(\terzopunto, \widetilde h(\terzopunto),0)$, 
with $t_1<\terzopunto<t_2$, which is strictly above the segment 
$l_{12}$ in $\doubledrectangle$ joining $(t_1,\widetilde h(t_1),0)$ 
and $(t_2,\widetilde h(t_2),0)$. Let $f:\R^3\rightarrow \R$ be a 
nonzero affine function\footnote{Take  
the signed distance from the plane.} 
vanishing on the plane passing through $l_{12}$ 
and orthogonal to $\{w_3=0\}$, and assume that 
$f$ is positive at $(\terzopunto,\widetilde h(\terzopunto),0)$. Let $\pointQ\in \Sigma$ be 
such that $\projthree(\pointQ)=(\terzopunto,\widetilde h(\terzopunto),0)$. 
Then $f\circ \Phi:
\unitdisc
\rightarrow \R$ is harmonic, and by the maximum principle there is a
continuous curve\footnote{The set $(f\circ\Phi)^{-1}((0,\infty))$ is open, and cannot have connected components not intersecting $\partial \unitdisc$, by harmonicity.} $\gamma_\pointQ$ in $\unitdisc$ 
joining $\Phi^{-1}(\pointQ)$ to $\partial \unitdisc$ such that 
$f\circ \Phi$ is always positive on $\gamma_\pointQ$. {}
 But now, the continuous
curve $\projthree\circ \Phi(\gamma_\pointQ)$ joins $(\terzopunto,\widetilde h(\terzopunto),0)$ 
to $\projthree(\Gamma)$ and remains, in $\doubledrectangle$, strictly above the segment 
$l_{12}$. This is a contradiction,
 because $\projthree\circ \Phi(\gamma_\pointQ)$ must be in the interior of
 the subgraph of $\widetilde h$.

\smallskip

Before passing to step 6, recall the definition of $\Gamma_0$ in 
\eqref{eq:Gamma_0}, and  observe
that the Jordan curve $\Gamma^+\cup\Gamma_0$ is the boundary of the disc-type surface $\dtsp$.

Let us denote by $U\subset K$ the connected component of $K\setminus \Gamma_0$ with boundary $\Gamma_0\cup (\{0\}\times [\bar s,1])\cup ([0,2l]\times \{\bar s\})\cup (\{2l\}\times [\bar s,1])$.

We are now in a position to show that $\Sigma^+$ admits
a non-parametric description over the plane $\{w_3=0\}$.

\medskip
\textit{Step 6: The disc-type surface $\dtsp$ 
can be written as a graph over the plane $\{w_3=0\}$ 
of a $W^{1,1}$ function 
$\widetilde \psi:U\rightarrow [0,+\infty)$}. 

At first we observe that if $\Sigma^+$ is not Cartesian with 
respect to $\{w_3=0\}$, then there is 
some point $\pointP\in \Sigma^+\setminus \partial \Sigma^+$ 
where the tangent plane to $\Sigma^+$ is vertical,
that is, it contains the 
line $\{\pointP+(0,0,w_3):\;w_3\in \R\}$.
This can be 
seen as follows: as shown in step 5,  
the intersection between $\Sigma^+$ and any plane $\{w_1=\textrm{cost}\}$,
${\rm cost}\in (0,2l)$, is a 
simple curve with endpoints in $\partial \Sigma^+$. If $\Sigma^+$ is not Cartesian, one of these curves $\gamma$ 
is not Cartesian, and then there is a point where 
the tangent vector to $\gamma$ is vertical. 
At such a point the tangent plane to $\Sigma^+$  
is vertical. 

\begin{itemize}
	\item[Claim:] If $\Pi$ is a vertical plane tangent to $\Sigma$, then there is at most one point where 
$\Pi$ and $\Sigma$ are tangent.
\end{itemize}
We use an argument similar to 
the one needed to prove Rado's Lemma \cite[Lemma 2, pag. 295]{Hil1}.
Assume $\Pi$ intersects the relative interior of $\Sigma$. It is easy to 
see that the intersection between $\Pi$ and the Jordan curve $\Gamma$ consists at most 
of four points\footnote{A vertical plane $\Pi$ 
intersects $K$ on a straight segment. In turn, this segment intersects 
$\partial K$ in two points. If $\Pi$ intersects $\Gamma$ in a point $(W_1,W_2,W_3)$, then $(W_1,W_2,0)\in \partial K$. Moreover,
$\Pi$ intersects $\Gamma$ also at $(W_1,W_2,-W_3)$. Thus, the points of intersection are at most four. The degenerate cases in which $\Pi$ contains a full $\mathcal H^1$-measured part of $\Gamma$ are excluded by this analysis, because in these cases $\Pi$ does not intersect the interior of $\Sigma$. Instead, the cases in which the intersection consists of $2$ or $3$ points are easier to treat, and we detail only the $4$-points case (notice that by the geometry of $\Gamma$, the case of  $3$ points occurs when this plane is tangent to $\Gamma$ at one of the points $(0,1,0)$ or $(2l,1,0)$).} $p_i$, $i=1,2,3,4$.
Let $f$ be a linear function on $\R^3$ vanishing on $\Pi$.
Then  
 $f\circ \Phi$ is harmonic  in $\unitdisc$ 
and continuous in $\overline \unitdisc$; in addition, it vanishes
at $\{p_i,\;i=1,2,3,4\}$, and alternates its sign on the relatively
open four arcs
$\overline{p_i p_{i+1}}$
on $\partial \unitdisc$
with endpoints $p_i$.
With no loss of generality, we may assume $f\circ\Phi>0$ on $\overline{p_1p_2}$ and $\overline{p_3p_4}$. 
By harmonicity of $f\circ\Phi$, 
any connected component of the region $\{x\in \overline \unitdisc:f\circ\Phi(x)>0\}$  must contain part of  $\overline{p_1p_2}$ or $\overline{p_3p_4}$, so that we deduce that these connected components are at most two. 

Assume now by contradiction that there are two distinct 
points $\pointP$ and $\pointQ$ of $\Sigma$ such that $\Pi$ 
is tangent to $\Sigma$ at $\pointP$ and $\pointQ$.   
Since $f\circ\Phi$ has null differential at $\Phi^{-1}(\pointP)$ 
and $\Phi^{-1}(\pointQ)$, the set $\{f\circ \Phi=0\}$, 
in a neighbourhood of $\Phi^{-1}(\pointP)$, consists of $2m_p$ analytic 
curves crossing at $\Phi^{-1}(\pointP)$, whereas  in a neighbourhood of 
$\Phi^{-1}(\pointQ)$, it consists of $2m_q$ analytic curves crossing at 
$\Phi^{-1}(\pointQ)$. Therefore, in a neighbourhood of $\Phi^{-1}(\pointP)$, 
the set $\{f\circ \Phi>0\}$ counts at least $2$ open regions 
(and similarly at $\Phi^{-1}(\pointQ)$). Let us call 
$A_1$ and $A_2$ two of these regions
around $\Phi^{-1}(\pointP)$,
and 
$B_1$, $B_2$ two of these regions
around 
$\Phi^{-1}(\pointQ)$. 
By harmonicity each $A_i$ and $B_i$ must be connected to one of the 
arcs   $\overline{p_1p_2}$ or $\overline{p_3p_4}$. 
Hence some of these regions must belong to the same connected component
of $\{f\circ\Phi>0\}$. Then we are reduced to two following
cases (see Fig. \eqref{fig:twocases}): 
\begin{itemize}
	\item[(Case A)] $A_1$ and $A_2$ belong to the same connected 
component, 
say the one containing $\overline{p_1p_2}$. Hence we can construct two disjoint curves in $\{f\circ \Phi>0\}$, both joining $\Phi^{-1}(\mathcal P)$ to a point in   $\overline{p_1p_2}$, emanating from $\Phi^{-1}(\mathcal P)$, one in 
region $A_1$ and one in region $A_2$. This contradicts the 
maximum principle, because these two curves would enclose a region 
where $f\circ\Phi$ takes also negative values, whereas its boundary is in $\{f\circ\Phi>0\}$.
	\item[(Case B)] $A_1$ and $B_1$ are joined 
to $\overline{p_1p_2}$ and $A_2$ and $B_2$ are joined to $\overline{p_3p_4}$. 
In this case we can construct four curves in $\{f\circ\Phi>0\}$:  $\sigma_1$ and $\sigma_2$ emanating from $\Phi^{-1}(\mathcal P)$ in regions $A_1$ and $A_2$ and reaching $\overline{p_1p_2}$ and $\overline{p_3p_4}$, respectively; $\beta_1$ and $\beta_2$ emanating from $\Phi^{-1}(\mathcal Q)$ in regions $B_1$ and $B_2$ and reaching $\overline{p_1p_2}$ and $\overline{p_3p_4}$, respectively.
	The region enclosed between these $4$ curves has boundary contained in $\{f\circ\Phi>0\}$ and, inside it, necessarily 
the function $f\circ\Phi$ takes also negative values, again in contrast with the 
maximum principle.
\end{itemize} 
From the above discussion our claim follows.

We are now ready to conclude the proof of step 6:
suppose by contradiction  that $\Sigma^+$ is not Cartesian
with respect to $\{w_3=0\}$, and take a point $P^+ 
\in \Sigma^+ \setminus \Gamma$
where the tangent plane $\Pi$ to $\Sigma^+$ at $P^+$ is vertical. By symmetry of $\Sigma$, the point $P^-$, 
defined as the symmetric of $P^+$ with respect to the 
rectangle $\doubledrectangle$, belongs to $\Sigma^-$, and the tangent plane to $\Sigma^-$ at $P^-$ is the same plane $\Pi$. This contradicts the claim.
We eventually observe that $\widetilde \psi$ is analytic on the subgraph of $\widetilde h$, since its graph is $\Sigma^+$. We conclude that $\widetilde \psi$ belongs to $W^{1,1}(SG_{\widetilde h})$, since  
its total variation is bounded by the area of its graph, which is finite.

\nada{
Setting $\widetilde \dts:=\partial \mathbb 
S_{cl}(E)\setminus (\partial K\times \R)$, we see that
$\widetilde \dts$ is a disc-type 
surface whose boundary is $\Gamma$ (here we use 
that $\Gamma$ is symmetric and that $\Gamma^+$ and $\Gamma^-$ are Cartesian, 
with respect to $\{w_3=0\}$). Indeed, $\widetilde \Sigma$ is 
a Cartesian surface, graph of some function defined on the subgraph of 
$\widetilde h$, which has the topology of the disc. 
	
We deduce
\begin{equation}\label{strict_ineq}
\mathcal H^2(\widetilde \dts)<\mathcal H^2(\dts),
\end{equation}
 contradicting the minimality of $\dts$.  
This proves our claim, and
therefore there exists 
$$\widetilde \psi\in BV(U,[0,+\infty))$$
such that its 
graph over $\overline{U}$ coincides with $\dtsp$.
Hence we can write
\begin{align}
 \mathcal H^2(\dtsp)= \areaonecod(\widetilde \psi, \overline U),
\end{align}
and from \eqref{ineq_withF} we conclude
\begin{align}\label{ineq_withF2}
 \areaonecod(\widetilde \psi, \overline U)+
\areaonecod(\psi, R\setminus K)\leq \FB(h,\psi).
\end{align}
}

\begin{figure}
	\begin{center}
		\includegraphics[width=0.7\textwidth]{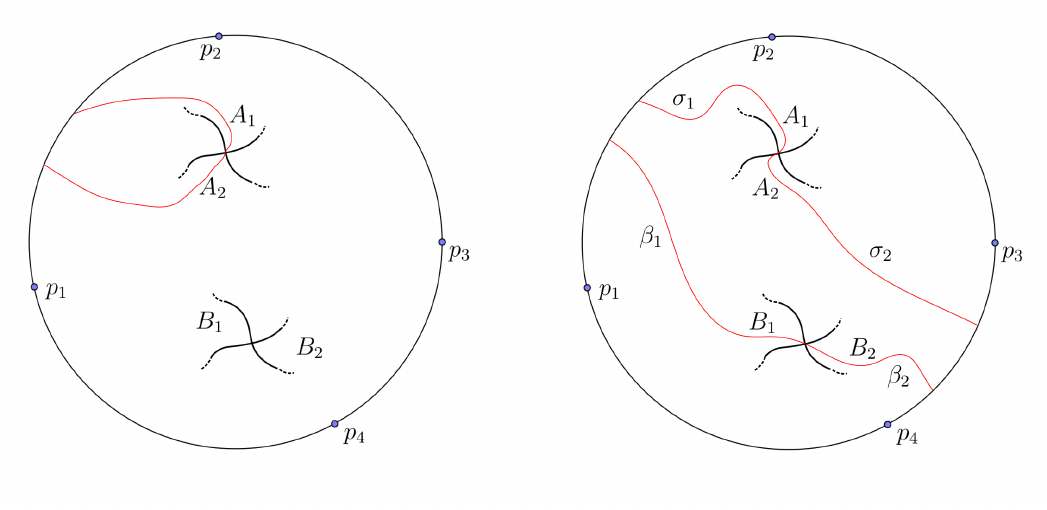}
		\caption{On the left it is represented 
case A in step 6 of the proof of Theorem \ref{teo:boundary_regularity}. The point $\Phi^{-1}(\mathcal P)$ is in the cross where the two emphasized paths start from. These curves stand in the region $\{f\circ\Phi>0\}$ and join $\Phi^{-1}(\mathcal P)$ with the 
arc $\overline{p_1p_2} \subset \partial \unitdisc$. The picture on the right represents instead case B. The two cross points are $\Phi^{-1}(\mathcal P)$ and $\Phi^{-1}(\mathcal Q)$ and the paths $\sigma_1$, $\sigma_2$, $\beta_1$, $\beta_2$ are depicted.
		}
		\label{fig:twocases}
	\end{center}
\end{figure}

\medskip

\textit{Step 7: the pair 
$(\widetilde h, \widetilde \psi)\in X_{2\longR}^{\rm conv}$.}

We recall that in step 5 we 
proved that $\widetilde h$ is convex and 
$\widetilde h(\cdot)=\widetilde h(2l-\cdot)$, \textit{i.e.}  $\widetilde h\in \mathcal H_{2l}$.
Furthermore $\Sigma^+$ is the graph of $\widetilde \psi$, and its projection on the plane $\{w_3=0\}$ is the subgraph of $\widetilde h$. It follows that the area of the graph of $\widetilde \psi$ 
is exactly the area of $\Sigma^+$ upon $SG_{\widetilde h}$. Let us also recall the 
$W^{1,1}$ regularity of $\widetilde \psi$ proved in 
step 6.
Setting 
$$
\widetilde \psi:=\psi \qquad{\rm in}~ \doubledrectangle\setminus K,
$$
 we infer $(\widetilde h,\widetilde \psi)\in X_{2\longR}^{\rm conv}$.
Since $\psi$ is analytic, this construction turns 
out to be independent of the choice of $\overline s$.

\medskip
\textit{Step 8: Conclusion of the proof.} 

From \eqref{ineq_withF} we deduce
\begin{align}
 \FB(h,\psi)\geq \mathcal H^2(\dtsp)+ \areaonecod(\psi, \doubledrectangle\setminus K)= \FB(\widetilde h,\widetilde \psi),
\end{align}
where the last equality follows from the fact that $\widetilde \psi$ 
is continuous on $\partial_D\doubledrectangle$. Hence, also $(\widetilde h,\widetilde \psi)$ is a minimizer for $\FB$.
Now, we show (ii) and (iii), namely 
that $\widetilde \psi$ is continuous and equals $0$ on $G_{\widetilde h}$. 
Indeed $\dts=\dtsp\cup \dtsm$ is 
analytic, 
 hence the graph of $\widetilde h$ coincide with the intersection of the analytic surface $\Sigma$ with the plane $\{w_3=0\}$ (which is not tangent to $\Sigma$);
 it follows that $\widetilde h$ is continuous. 
Moreover we know that $\widetilde \psi$ is smooth in 
$SG_{\widetilde h}$. 
If its  trace $\widetilde \psi^+$ 
on the boundary of $SG_{\widetilde h}$ is strictly positive somewhere, 
say at $Z\in  G_{\widetilde h}\times \{0\}$,
we infer that the vertical segment defined as
$$\{(Z_1,Z_2,w_3):|w_3|\in (0,\widetilde \psi^+(w_1,w_2))\},$$
is contained in $\Sigma\cap\{w_1=Z_1\}$, 
which is an analytic curve. This would imply
 that $\Sigma\cap\{w_1=Z_1\}$ is contained in the straight line $(Z_1,Z_2)
\times \R$, which is a contradiction, because $(Z_1,-1,0)\in 
\Sigma\cap\{w_1=Z_1\}$, and $Z_2>-1$.
We conclude $\widetilde \psi^+=0$ on $G_{\widetilde h}$.

Finally, let us prove (i), 
i.e., that $L_{\widetilde h} = \emptyset$. For, if 
not, the vertical part of $\dtsp$ obtained on $L_{\widetilde h}$ 
is flat and then, by analyticity, also $\dtsp$ is, a contradiction.
This completes the proof.
\qed 

\medskip

A direct consequence of Theorem \ref{teo:boundary_regularity} is the following
which,
coupled with Corollary \ref{cor:smoothness_of_h} and its 
subsequent discussion,
 concludes the proof of Theorem \ref{teo:main1}.
\begin{cor}\label{cor_reg}
	Let $(h,\psi) \in X_{2\longR}^{{\rm conv}}$ be a solution 
of \eqref{eq:B_intro}. Then,  either $(h,\psi)$ is degenerate, i.e. $h\equiv-1$, 
or:
\begin{itemize}
\item[(i)]
$h(0)=1=h(2l)$, 
\item[(ii)]
$\psi$ is continuous up
to the boundary of $SG_{h}$,
\item[(iii)]
$\psi = 0$ on $G_h$.
\end{itemize}
\end{cor}
\begin{proof}
As in the proof of Theorem \ref{teo:boundary_regularity}, if $(h,\psi)$ is not degenerate, then $(h,\psi)$ is a solution as in Lemma \ref{lemma_reg2_bis} and we can choose $\overline s\in (-1,h(l))$. 
In step 7 of that proof we found that $(\widetilde h,\widetilde \psi)$ is another minimizer, with $\widetilde \psi$ 
coinciding with $\psi$ on $\doubledrectangle
\setminus K$. 
Now,
we claim that $h = \widetilde h$ and $\psi = \widetilde \psi$.
By analyticity of both $\psi$ and $\widetilde \psi$ in 
their domain $SG_h$ and $SG_{\widetilde h}$
of definition respectively, 
they must coincide on $SG_h\cap SG_{\widetilde h} $. Hence, if $\overline w_1\in (0,2l)$ is such that $\widetilde h(\overline w_1)<h(\overline w_1)$, we deduce that $\psi(\overline w_1,\widetilde h(\overline w_1))=\widetilde \psi(\overline w_1,\widetilde h(\overline w_1))$ contradicting the 
maximum principle because $(\overline w_1,\widetilde h(\overline w_1))\in SG_h$. Therefore, necessarily $h\leq \widetilde h$. This implies that the trace of $\psi$ on $G_h$ coincides with the restriction of $\widetilde \psi$ on $G_{h}$, and thus this trace is continuous. If, by contradiction $h(\overline w_1)<\widetilde h(\overline w_1)$ for some $\overline w_1\in (0,2l)$, we find a contradiction as follows: Let $(a,b)$ be 
a maximal interval containing $\overline w_1$ 
where $h<\widetilde h$ on it. Suppose for simplicity that $(a,b)=(0,2l)$ (otherwise a similar argument applies).

We consider the curve $\Gamma^+$ obtained by glueing the graph of $\psi=\widetilde \psi$ over $G_h$ with the graph of $\varphi$ on the two segments $L_h$, and we consider also $\Gamma^-$, the symmetric of $\Gamma^+$ with respect to the plane $\{w_3=0\}$. Since both $\psi$ and $\varphi$ are strictly positive on these domains, the curve $\Gamma:=\Gamma^+\cup \Gamma^-$ is a Jordan curve. The surface 
$$S:=\{w\in \R^3:(w_1,w_2)\in G_h,\;|w_3|\leq \psi(w_1,w_2)\}\cup\{w\in \R^3:(w_1,w_2)\in L_h,\;|w_3|\leq \varphi(w_1,w_2)\}$$
is a disc-type surface spanning $\Gamma$, and thus $\mathcal H^2(S)\geq \mathcal H^2(\Sigma_\Gamma)$ 
where $\Sigma_\Gamma$ is a disc-type 
solution of the Plateau problem spanning
$\Gamma$. By definition of $\widetilde \psi$, its graph $\mathcal G_{\widetilde \psi}$ over $SG_{\widetilde h}\setminus SG_h$ 
enjoys the property that, denoting by $\mathcal G^-_{\widetilde \psi}$ its symmetric with respect to the plane $\{w_3=0\}$, 
the surface $\widetilde \Sigma:=\mathcal G_{\widetilde \psi}\cup \mathcal G_{\widetilde \psi}^-$ is a solution to the Plateau problem for discs spanning $\Gamma$. Hence we deduce that $\mathcal H^2(S)\geq \mathcal H^2(\widetilde\Sigma)$. Since however $(h,\psi)$ and $(\widetilde h,\widetilde \psi)$ are both minimizers of $\mathcal F_{2l}$, 
the same argument in Step 8 of Theorem 
\ref{teo:boundary_regularity}
implies $\mathcal H^2(S)= \mathcal H^2(\widetilde\Sigma)$, and 
$S$ is a solution to the Plateau problem for discs spanning $\Gamma$. 
However, unless $h\equiv1$, this contradicts the strong maximum principle, because $\Gamma$ is a non-planar curve contained in the boundary of the convex set $$C:=\{w\in \R^3:w_2>h(w_1)\},$$ and so the interior of $S$ cannot lie on $\partial C$. 
This contradiction 
leads us to our claim, namely  $h=\widetilde h$, and $\psi=\widetilde \psi$.
\end{proof}

Now, we discuss the smoothness of $h$:
\begin{cor}\label{cor:smoothness_of_h}
	If a solution $(h,\psi)$ of \eqref{eq:B_intro} is not degenerate (i.e. $h$ is not constantly $-1$), then $h\in C([0,2l])
$
and it is analytic in $(0,2l)$.
\end{cor}
\begin{proof}
	The continuity of $h$ at $0$ and $2l$ is clear since $L_h=\varnothing$.
	Going back to Step 4 of the proof of Theorem 
\ref{teo:boundary_regularity}, we have seen that the graph $G_h$ of $h$ 
coincides with the curve $\Gamma_0=\Sigma\cap\{w_3=0\}$ 
(see \eqref{eq:Gamma_0}). Let $t_0\in (0,2l)$, and let $\puntone=
(t_0,h(t_0),0)\in \Sigma\cap \{w_3=0\}$. Let 
$\gamma_0\subset \overline\unitdisc$
be defined as $\gamma_0:=\Phi^{-1}(\Sigma\cap \{w_3=0\})=\{x\in \overline \unitdisc:\Phi_3(x)=0\}$, 
which is a simple curve in $\unitdisc$ connecting two points on 
$\partial \unitdisc$, and let $\puntino:=\Phi^{-1}(\puntone)\in \gamma_0$. 
Setting $\partial_i = \frac{\partial}{\partial w_i}$,
we have that $\partial_{1} \Phi(\puntino)$ and $\partial_2\Phi(\puntino)$ 
are distinct vectors generating the tangent plane to $\Sigma $ at $\puntone$, 
which is a vertical plane; hence $\partial_{1} \Phi_3(\puntino)$ 
and $\partial_2\Phi_3(\puntino)$ cannot be both $0$ (say, $\partial_{2} 
\Phi_3(\puntino)\neq0$). Therefore,
by the implicit function theorem, in a neighborhood of $\puntino$, 
$\gamma_0$ can be parametrized by a function $\sigma:(-\delta,\delta)
\rightarrow \gamma_0$, $\sigma(s)=(s,f(s))$ with $f'(s)=-\frac{\partial_{1} \Phi_3(s,f(s))}{\partial_{2} \Phi_3(s,f(s))}$; 
as $\Phi$ is analytic in $\unitdisc$, 
we deduce that $f$, and therefore $\sigma$, are analytic in a neighborhood of $0$.
	Now 
	$s\mapsto\Phi(\sigma(s))$ parametrizes $\Gamma_0$ in a 
neighborhood of $\puntone$. As we know that $\Gamma_0$ is the graph of $h$, we see that $\frac{d}{ds}\Phi_1(\sigma(s))$ is 
non-zero in a neighborhood of $0$. Defining the parameter
	$$t(s):=t_0+\int_0^s|\nabla \Phi_1(\sigma(s))\sigma'(s)|ds,$$
	if $s(t)$ denotes its inverse, we have
	$$\frac{d}{dt}s(t)=\frac{1}{|\nabla \Phi_1(\sigma(s(t)))\sigma'(s(t))|},$$
	and $s(\cdot)$ is analytic in a neighborhood of $t_0$.
	Then we have
	$h(t)=\Phi_2(\sigma(s(t)))$, which is the composition of 
analytic maps, hence
analytic  in a neighborhood of $t_0$. As this holds for all $t_0\in (0,2l)$, the assertion follows.
\end{proof}

\medskip

To conclude the proof of 
Theorem \ref{teo:main1},
it remains to show (2iv). 
The pair $(h \equiv 1,  \varphi)$ , where 
the function $\varphi$ is as in \eqref{eq:varphi},
is one of the competitors for
problem \eqref{eq:B_intro} (notice that $\varphi$ attains the boundary condition); in addition,
its subgraph is strictly convex 
(see Fig. \ref{fig:graphofphi}), hence\footnote{As already observed, 
the minimal surface $\Sigma^+$ is the graph of $\psi=\widetilde \psi$, and it must be contained in the convex envelope of $\Gamma$, \textit{i.e.}, inside the subgraph of $\varphi$.}  necessarily
$\psi\leq \widehat \varphi$ in $\overline R_{2\longR}$, where 
we have taken  $\psi=\widetilde \psi$, the solution given by Theorem 
\ref{teo:boundary_regularity}.

Eventually, the strict inequality in Theorem \ref{teo:main1} (2iv)
is a consequence of the strong maximum principle: indeed, 
points in $\Sigma\setminus \partial\Sigma$  are always strictly inside the convex hull of $\partial\Sigma$, with the only exception when $\partial\Sigma$ is planar 
(see \cite[pag 63, section 70]{Nitsche:89}); so that points of $\Sigma^+\setminus \partial\Sigma$ are strictly inside the graph $G_{\varphi}$ of $\varphi$ (that is  half of the lateral boundary of a cylinder).
\end{proof}

Now, we point out another consequence of 
Theorem \ref{teo:boundary_regularity}, {which gives
the proof of Theorem \ref{teo:main2}}.
Let $G_w$ be the graph in $\doubledrectangle$ of a 
function $w\in C([0,2l],(-1,1])$ such that $w(0)=w(2l)=1$,
 and consider the curve $\Gamma_w$ obtained by concatenation 
of $G_w$ with the graph of $\varphi$ over $\partial_D\doubledrectangle$. 
\begin{cor}\label{main_cor}
We have
\begin{align}\label{equiv_plateau}
\mathcal F_{2l}(h,\psi)=\inf\mathcal P_{\Gamma_w}(X_{{\rm min}}),
\end{align}
where $(h,\psi)\in X_{2l}^{\rm{conv}}$ 
is a minimizer of $\mathcal F_{2l}$, $X_{{\rm min}}$ 
is a parametrization of a disc-type 
area-mininizing solution of 
the Plateau problem spanning  $\Gamma_w$ (see \eqref{plateau}), 
and the infimum is computed over all functions $w$ as above.
\end{cor}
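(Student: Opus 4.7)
My plan is to prove \eqref{equiv_plateau} via two opposite inequalities.

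For the inequality $\inf_w \mathcal P_{\Gamma_w}(X_{\min}) \le \mathcal F_{2l}(h,\psi)$, I take the specific competitor $w = h$. Corollary \ref{cor_reg} and Theorem \ref{teo:main1}(2iii) guarantee that the minimizer $\psi$ is continuous on $\overline{SG_h}$, vanishes on $G_h$, agrees with $\varphi$ on $\partial_D R_{2l}$, and satisfies $\mathcal F_{2l}(h,\psi) = \mathcal A(\psi, SG_h)$. The closure of the graph of $\psi$ over $SG_h$ is therefore a disc-type surface in $\R^3$ whose boundary is exactly $\Gamma_h$ and whose area equals $\mathcal F_{2l}(h,\psi)$. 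Being an admissible disc-type parametrization for the Plateau problem spanning $\Gamma_h$, this yields $\mathcal P_{\Gamma_h}(X_{\min}) \le \mathcal F_{2l}(h,\psi)$, and taking the infimum over $w$ completes this direction.

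For the reverse inequality $\inf_w \mathcal P_{\Gamma_w}(X_{\min}) \ge \mathcal F_{2l}(h,\psi)$, I fix an arbitrary admissible $w$ and let $X_{\min}$ be a disc-type area-minimizer spanning $\Gamma_w$. Since $\Gamma_w \subset \{w_3 \ge 0\}$, the maximum principle applied to the harmonic coordinate function $w_3|_{X_{\min}}$ gives $X_{\min} \subset \{w_3 \ge 0\}$. I then follow the strategy of Steps 5 and 6 of the proof of Theorem \ref{teo:boundary_regularity}: slicing $X_{\min}$ by vertical planes $\{\axialcoordofcylinder = t\}$ and invoking Rado's lemma produces simple arcs connecting the two corresponding points of $\Gamma_w$, so $\projthree(X_{\min})$ is the subgraph of a continuous function $h_w \in C([0,2l],(-1,1])$ with $h_w(0) = h_w(2l) = 1$; a suitable adaptation of Step 6 then identifies $X_{\min}$ with the graph over $SG_{h_w}$ of a function $\psi_w \in W^{1,1}(SG_{h_w})$ attaining the boundary data of $\Gamma_w$, namely $\psi_w = \varphi$ on $\partial_D R_{2l}$ and $\psi_w = 0$ on $G_{h_w}$. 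Hence $(h_w, \psi_w) \in \widetilde{\mathcal H}_{2l} \times \mathcal X_{D, \varphi}$ is a competitor for the right-hand side of \eqref{plateau_intro2}, and
\[
\mathcal F_{2l}(h,\psi) \;=\; \inf\{\mathcal A(\psi', SG_{h'}) : (h',\psi') \in \widetilde{\mathcal H}_{2l} \times \mathcal X_{D,\varphi}, \ \psi' = 0 \text{ on } G_{h'}\} \;\le\; \mathcal A(\psi_w, SG_{h_w}) \;=\; \mathcal H^2(X_{\min}) \;=\; \mathcal P_{\Gamma_w}(X_{\min}),
\]
and passing to the infimum in $w$ closes the argument.

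The main obstacle is the Cartesian representation of $X_{\min}$, and specifically the adaptation of Step 6. In the doubled setting treated in Theorem \ref{teo:boundary_regularity}, vertical tangent planes to $\Sigma$ at interior points were excluded using the reflection symmetry of $\Sigma$ across $\{w_3 = 0\}$, a symmetry we do not automatically have for the single-curve minimizer $X_{\min}$ spanning $\Gamma_w$. To recover it I would double $X_{\min}$ across the plane $\{w_3 = 0\}$, obtaining via the boundary reflection principle for minimal surfaces (applied along the portion of $\Gamma_w$ lying in $\{w_3 = 0\}$) a symmetric minimal surface spanning the self-intersecting curve $\Gamma_w \cup \Gamma_w^-$ of the type considered in Section \ref{subsec:a_Plateau_problem_for_a_self-intersecting_boundary_space_curve}; the vertical-tangent-plane argument of Step 6 then applies verbatim to this doubled surface, the second tangent point being the reflection of the first across $\{w_3 = 0\}$.
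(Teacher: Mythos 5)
Your two-inequality architecture is reasonable, and your ``$\leq$'' direction matches what the paper intends (the paper itself offers only a one-sentence pointer to the proof of Theorem \ref{teo:boundary_regularity}). Two smaller remarks on that half: in the degenerate case $h\equiv -1$ the choice $w=h$ is inadmissible, and you must instead take a sequence $w_n\to -1$ together with the explicit competitor made of the two vertical half-discs plus the flat region between $G_{w_n}$ and $\partial_D\doubledrectangle$, giving $\inf_w\mathcal P_{\Gamma_w}\leq \pi=\mathcal F_{2l}(-1,0)$; and your display quotes \eqref{plateau_intro2}, which is part of Theorem \ref{teo:main2} and is itself derived from this corollary, so you should either invoke only the one-sided inequality $\min\FB\leq\mathcal A(\psi_w,SG_{h_w})$ (via the doubled analogue of Proposition \ref{modifications_of_h}) or make the logical order explicit to avoid circularity.

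The genuine gap is in the ``$\geq$'' direction, exactly at the point you flag. The reflection you invoke to restore the symmetry needed in Step 6 is not available: the planar portion of $\Gamma_w$ is the \emph{prescribed} Dirichlet arc $G_w\times\{0\}$, which is in general not a straight segment. The Schwarz reflection principle extends a minimal surface only across a straight line contained in its boundary (by a $180^{\circ}$ rotation about that line), and the free-boundary reflection principle requires orthogonal contact with the supporting plane; neither hypothesis holds here. Hence the doubled surface $X_{{\rm min}}\cup X_{{\rm min}}^-$ is merely continuous across $G_w\times\{0\}$, generically has a crease there, and is not a harmonic and conformal image of the disc, so the maximum-principle arguments of Step 6 (harmonicity of $f\circ\Phi$ on the whole doubled parameter domain) cannot be applied to it ``verbatim''. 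Rado's graph theorem is also unavailable since $SG_w$ need not be convex, and for a non-convex $w$ neither the convexity of the resulting $h_w$ nor the vanishing of the trace of $\psi_w$ on $G_{h_w}\setminus G_w$ follows from your slicing argument. This is precisely why the paper runs the argument the other way around: in Theorem \ref{teo:boundary_regularity} the curve $\Gamma$ is symmetric by construction and meets $\{w_3=0\}$ only at the two points $P$ and $Q$, the Plateau solution is taken symmetric from the outset, and the planar free-boundary curve $\Gamma_0$ is \emph{produced} by the minimization (hence analytic and convex), rather than prescribed. The intended adaptation is to obtain the lower bound $\inf_w\mathcal P_{\Gamma_w}(X_{\rm min})\geq\min\FB$ through that symmetric free-boundary problem, not by proving that the Plateau solution for every prescribed $w$ is Cartesian; as written, your reverse inequality is not established.
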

The proof of this corollary
 can be achieved by adapting the proof of Theorem \ref{teo:boundary_regularity}, 
which shows that the solution to the 
Plateau problem in \eqref{equiv_plateau} is Cartesian and the optimal $w$ is convex.
{

\section*{Acknowledgements}
The first and third authors acknowledge the support
of the INDAM/GNAMPA.
The first two authors are
grateful to ICTP (Trieste), where part of this paper was written.
 The first and third authors also acknowledge the partial financial support of the F-cur project number 2262-2022-SR-CONRICMIUR$_{-}$PC-FCUR2022$_{-}002$ of the University of Siena, and the of the PRIN project 2022PJ9EFL "Geometric Measure Theory: Structure of Singular
Measures, Regularity Theory and Applications in the Calculus of Variations'', PNRR Italia Domani, funded
by the European Union via the program NextGenerationEU, CUP B53D23009400006. 

\addcontentsline{toc}{section}{References}

\end{document}